\newcommand\org@hypertarget{}
\let\org@hypertarget\hypertarget
\renewcommand\hypertarget[2]{%
  \Hy@raisedlink{\org@hypertarget{#1}{}}#2%
} 
\newtheorem{theorem}{Theorem}[section]
\newtheorem{lemma}[theorem]{Lemma}
\newtheorem{corollary}[theorem]{Corollary}
\newtheorem{proposition}[theorem]{Proposition}
\theoremstyle{definition}
\newtheorem{definition}[theorem]{Definition}
\newtheorem{remark}[theorem]{Remark}
\newcommand{\xysquare}[8]{
\[\xymatrix{
#1 \ar@{#5}[r] \ar@{#6}[d] & #2 \ar@{#7}[d]\\
#3 \ar@{#8}[r] & #4
}\]
}
\DeclareMathOperator*{\limone}{{\varprojlim}^1}
\DeclareMathOperator*{\indlimf}{``\varinjlim''}
\DeclareMathOperator*{\projlimf}{``\varprojlim''}
\newcommand{\al}{\alpha}
\newcommand{\bb}{\mathbb}
\newcommand{\blob}{\bullet}
\newcommand{\comment}[1]{}
\newcommand{\ep}{\varepsilon}
\newcommand{\into}{\hookrightarrow}
\newcommand{\isoto}{\stackrel{\simeq}{\to}}
\newcommand{\Isoto}{\stackrel{\simeq}{\longrightarrow}}
\newcommand{\onto}{\twoheadrightarrow}
\newcommand{\op}{\operatorname}
\newcommand{\pid}[1]{\langle #1 \rangle}
\renewcommand{\phi}{\varphi}
\newcommand{\quis}{\stackrel{\sim}{\to}}
\newcommand{\res}{\overline}
\newcommand{\roi}{\mathcal{O}}
\newcommand{\sub}[1]{{\mbox{\scriptsize #1}}}
\newcommand{\To}{\longrightarrow}
\newcommand{\ul}[1]{\underline{#1}}
\newcommand{\xto}{\xrightarrow}
\renewcommand{\cal}{\mathcal}
\renewcommand{\hat}{\widehat}
\renewcommand{\frak}{\mathfrak}
\newcommand{\indlim}{\varinjlim}
\renewcommand{\tilde}{\widetilde}
\renewcommand{\Im}{\operatorname{Im}}
\renewcommand{\ker}{\operatorname{Ker}}
\renewcommand{\projlim}{\varprojlim}
\DeclareMathOperator{\dlog}{dlog}
\DeclareMathOperator{\Spec}{Spec}
\DeclareMathOperator{\Spa}{Spa}
\newcommand{\xTo}[1]{\stackrel{#1}{\To}}
\begin{document}
\itemsep0pt

\title{$p$-adic vanishing cycles as Frobenius-fixed points}

\author{Matthew Morrow}

\date{}

\maketitle

\begin{abstract}
Given a smooth formal scheme over the ring of integers of a mixed-characteristic perfectoid field, we study its $p$-adic vanishing cycles via de Rham--Witt and $q$-de Rham complexes, complementing some results of \cite{Bhatt_Morrow_Scholze3}.
\end{abstract}

\tableofcontents

\section{Introduction}
The main goal of this note is to study the relation of $p$-adic vanishing cycles to de Rham--Witt complexes, by interpreting the former as the Frobenius-fixed points of some of the integral cohomology theories from \cite{Bhatt_Morrow_Scholze2}. Our motivation for this is twofold. Firstly, we wished to understand the calculation of T.~Geisser and L.~Hesselholt \cite{GeisserHesselholt2006c} of $p$-adic vanishing cycles as the Frobenius-fixed points of a de Rham--Witt sheaf; their calculation works over a discretely valued $p$-adic field and takes into account the associated canonical log structure, whereas here we seek an analogue over the algebraic closure of the field. Secondly, we have tried to draw as close an analogy as possible with the situation of a smooth variety $Y$ over a perfect field $k$ of characteristic $p$, in which $p$-adic vanishing cycles should be replaced by the sheaf of de Rham--Witt log forms $W_N\Omega^j_{Y/k,\sub{log}}$ (i.e., the Frobenius-fixed points of the classical de Rham--Witt sheaves of $Y$), which is the same as the \'etale motivic cohomology $\bb Z(j)_\sub{\'et}/p^N\bb Z(j)_\sub{\'et}$ by T.~Geisser and M.~Levine \cite{GeisserLevine2000}. On the other hand, here we say nothing about the relation to syntomic cohomology or Nygaard filtrations; these are addressed in \cite{Bhatt_Morrow_Scholze3}.

To present the results we adopt the same set-up as \cite{Bhatt_Morrow_Scholze2}, namely we let $C$ be a perfectoid field of mixed characteristic containing all $p$-power roots of unity and $\frak X$ a smooth formal scheme over its ring of integers $\roi$; denote by $X$ its rigid analytic generic fibre over $C$. In \cite{Bhatt_Morrow_Scholze2} with B.~Bhatt and P.~Scholze we constructed certain complexes of sheaves $\tilde{W_r\Omega}_\frak X$ of $W_r(\roi)$-modules on the \'etale site $\frak X_\sub{\'et}$ (i.e., the \'etale site of its special fibre) for each $r\ge1$; a key property of these complexes is the existence of a certain ``$p$-adic Cartier isomorphism'' relating their cohomology sheaves to the relative de Rham complex $W_r\Omega^\blob_{\frak X/\roi}$ of A.~Langer and T.~Zink \cite{LangerZink2004}; to be precise, there are natural isomorphisms \[H^j(\tilde{W_r\Omega}_{\frak X}\{j\})\cong W_r\Omega^j_{\frak X/\roi}\] where $\{j\}$ denotes a ``Breuil--Kisin twist'' (this will be reviewed in Section \ref{subsection_BK}).

The complexes $\tilde{W_r\Omega}_{\frak X}\{j\}$ are moreover equipped with Frobenius and Restriction maps (the latter only after suitable cohomological truncation), lifting those on the de Rham--Witt sheaves $W_r\Omega^j_{\frak X/\roi}$, and the goal of this note is to study the Frobenius fixed points, to be precise the homotopy fibre of \[F-R:\tau^{\le j}\tilde{W_r\Omega}_\frak X\{j\}\To\tau^{\le j}\tilde{W_{r-1}\Omega}_\frak X\{j\}.\] The first theorem is the identification of this homotopy fibre modulo $p^N$, viewed as a pro complex of sheaves over the Restriction maps, as the $p$-adic vanishing cycles $\tau^{\le j}Rb_*(\bb Z/p^N\bb Z(j))$, where $b:X_\sub{\'et}\to\frak X_\sub{\'et}$ is the usual projection map of sites:

\begin{theorem}\label{theorem_intro}
For each $N,j\ge0$ there is a natural fibre sequence of pro complexes of sheaves on $\frak X_\sub{\'et}$ \[\tau^{\le j}Rb_*(\bb Z/p^N\bb Z(j))\To \projlimf_{r\sub{ wrt }R}\tau^{\le j}\tilde{W_r\Omega}_\frak X\{j\}\xTo{F-R}\projlimf_{r\sub{ wrt }R}\tau^{\le j}\tilde{W_{r-1}\Omega}_\frak X\{j\}.\]
\end{theorem}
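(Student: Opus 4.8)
The strategy is to prove the fibre sequence \'etale-locally on $\frak X$, reducing it to an explicit computation with the Koszul-type complexes underlying $\tilde{W_r\Omega}_\frak X$, and to construct the comparison map from the $p$-adic \'etale comparison theorem of \cite{Bhatt_Morrow_Scholze2}. In detail, every object appearing is a (pro-)complex of sheaves on $\frak X_\sub{\'et}$ and the asserted arrow is natural, so it suffices to build it, and to verify that it is an equivalence of pro-complexes, after restriction to a basis of affine opens $\frak X=\Spf R$ where $R$ is a small smooth $\roi$-algebra equipped with a framing $\roi\langle T_1^{\pm1},\dots,T_d^{\pm1}\rangle\to R$. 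On such an $\frak X$ the rigid space $X$ is a $K(\pi,1)$ for the pro-\'etale Kummer cover $R\to R_\infty$ with group $\Delta\cong\bb Z_p(1)^d$, so that $Rb_*(\bb Z/p^N\bb Z(j))|_\frak X$ is the continuous cochain complex $R\Gamma_\sub{cont}(\Delta,-)$ of the corresponding $\Delta$-module, namely an explicit Koszul complex on topological generators $\gamma_1,\dots,\gamma_d$ of $\Delta$; meanwhile $\tilde{W_r\Omega}_R\{j\}$ is, by its construction in \cite{Bhatt_Morrow_Scholze2}, the analogous Koszul / $q$-de Rham complex over $W_r(\roi)=A_\inf/\tilde\xi_r$, carrying its Frobenius $F$ (induced by the crystalline/prismatic Frobenius $\phi$ on the $A_\inf$-complex, the $r=\infty$ case) and its restriction $R$ (induced by $A_\inf/\tilde\xi_r\onto A_\inf/\tilde\xi_{r-1}$).

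\emph{Construction of the map.} The $p$-adic \'etale comparison theorem of \cite{Bhatt_Morrow_Scholze2}, which after inverting $\mu$ identifies the $A_\inf$-complex with $Rb_*(\bb Z_p)\dotimes_{\bb Z_p}A_\inf[1/\mu]$ compatibly with Frobenius, provides a natural map $Rb_*(\bb Z_p(j))\To\tilde{W_\infty\Omega}_\frak X\{j\}$ already before inverting $\mu$; here the Breuil--Kisin twist $\{j\}$ is precisely what arranges that $\phi$ acts on $\tilde{W_\infty\Omega}_\frak X\{j\}$ lifting the de Rham--Witt Frobenius $F$, so that this map is $\phi$-equivariant for the trivial Frobenius on the \'etale side. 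Invoking the standard vanishing range of $p$-adic vanishing cycles, the map factors through $\tau^{\le j}$; reducing modulo $\tilde\xi_r$ and modulo $p^N$ then gives, compatibly in $r$, a map $\tau^{\le j}Rb_*(\bb Z/p^N\bb Z(j))\To\tau^{\le j}\tilde{W_r\Omega}_\frak X\{j\}$ that equalises $F$ with the transition map $R$ of the pro-system, hence a map from $\tau^{\le j}Rb_*(\bb Z/p^N\bb Z(j))$ into the homotopy fibre of $F-R$.

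\emph{The equivalence.} It remains to show this map is an equivalence of pro-complexes of sheaves, which may be tested on cohomology sheaves. Combining the long exact sequence of the fibre sequence with the $p$-adic Cartier isomorphism $H^i(\tilde{W_r\Omega}_\frak X\{j\})\cong W_r\Omega^i_{\frak X/\roi}\{j-i\}$ expresses $H^*$ of the homotopy fibre through kernels and cokernels of $F-R$ acting on the Breuil--Kisin-twisted relative de Rham--Witt forms of Langer--Zink. In top degree $i=j$ one identifies the relevant kernel with the image of the symbol map $\dlog[\,\cdot\,]$ and thereby with $R^jb_*(\bb Z/p^N\bb Z(j))$; this is the analogue, over the perfectoid base $\roi$, of the Bloch--Kato--Gabber theorem and of the de Rham--Witt descriptions of $p$-adic vanishing cycles and of \'etale motivic cohomology of \cite{GeisserHesselholt2006c,GeisserLevine2000}. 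In lower degrees one reduces to the top-degree case by a descending induction that strips the Breuil--Kisin twist one unit at a time using the coordinates of the framing, the base case $i=j=0$ being Artin--Schreier--Witt theory for the pro-system $W_\bullet(\roi)$.

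\emph{Main obstacle.} The crux is the surjectivity in the last step --- that every pro-class modulo $p^N$ annihilated by $F-R$ is a $\dlog$ symbol --- which is the Bloch--Kato--Gabber statement transported into the present framework, where three complications act simultaneously: the continuous $\bb Z_p(1)^d$-action feeding into $Rb_*$, the pro-structure over the restriction maps (one must show the relevant $\limone$-terms of the pro-systems of de Rham--Witt forms vanish, so that the fibre sequence is strictly exact on pro-objects rather than only after a further truncation), and the interplay of the Breuil--Kisin twist with $\phi$. A subsidiary technical point, needed for the displayed arrows to form a genuine fibre sequence, is to keep the truncations $\tau^{\le j}$ exact throughout as pro-objects.
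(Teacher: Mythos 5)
Your proposal sketches a plausible outline but leaves the actual mathematical core unproved, and where it proposes to fill the gap it invokes exactly the result that the paper is designed to reprove by other means.

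The decisive problem is your ``Main obstacle'' paragraph. You reduce the top-degree identification to ``the Bloch--Kato--Gabber statement transported into the present framework,'' i.e.\ to the assertion that every pro-class mod $p^N$ killed by $F-R$ is a $\dlog$ symbol, and you leave it there. But this \emph{is} the theorem. The paper is explicit (see the discussion around display (1.4), Remark \ref{remark_BK}, and Section \ref{section_end}) that before this work the only known proof of the needed surjectivity/$p$-torsion-freeness was via Bloch--Kato's symbol generation of $p$-adic vanishing cycles \cite[Corol.~6.1.1]{Bloch1986}, and that one of the goals of the paper is to give a proof of Theorem \ref{theorem_intro} that does \emph{not} route through symbols. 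Your argument instead routes directly through the symbol result, and moreover needs it over the perfectoid base $\roi$ (not the discretely valued field where Bloch--Kato work), without explaining how to obtain it. As written this is circular in spirit and at best incomplete in substance.

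The paper's actual proof is of a quite different shape. It never argues via $\dlog$ or symbols at all, nor via the rational \'etale comparison theorem that you propose to ``reduce modulo $\tilde\xi_r$ and $p^N$.'' Instead it works integrally on $X_{\sub{pro\'et}}$ with the Artin--Schreier sequence $0\to\bb Z/p^N\bb Z\to W_N(\hat\roi_{X^\flat}^+)\xrightarrow{\phi-1}W_N(\hat\roi_{X^\flat}^+)\to 0$, pushes it down the maps $\tilde\theta_r$ to identify $\projlimf_r Z_r/p^N$ with $\bb Z/p^N\bb Z$ (Lemma \ref{lemma_p_adic_cyces_via_witt}), and then compares the homotopy fibre of $F-R$ on $\tau^{\le j}\tilde{W_r\Omega}_\frak X\{j\}$ with that on $\tau^{\le j}R\nu_*W_r(\hat\roi_X^+)(j)$ via an analysis of twisted Artin--Schreier maps $F-\alpha R$ across the d\'ecalage (Corollaries \ref{corollary_A}, \ref{corollary_B}). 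Crucially, none of this uses $K(\pi,1)$-ness, framings, explicit Koszul models, or the inverted-$\mu$ comparison with \'etale cohomology; it is a direct integral argument on the pro-\'etale site.

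You also dismiss as a ``subsidiary technical point'' the commutation of $\tau^{\le j}$ with reduction mod $p^N$. In the paper this is the entirety of Section \ref{section_end} (Proposition \ref{appendix}, Lemma \ref{lemma_intersection_of_rou}, Proposition \ref{proposition_p_tf}): it is equivalent to $p$-torsion-freeness of $R^j\nu_*(\bb Z/p^N\bb Z)$, which in turn is equivalent to the displayed sequence (1.4) splitting into short exact sequences --- exactly the Bloch--Kato-type statement. So the ``subsidiary'' point and the ``main obstacle'' are the same problem, and neither is proved in your proposal. The paper resolves it by reducing, via the explicit Langer--Zink presentation of $W_r\Omega^j$ for Laurent polynomial algebras and Kedlaya/Elkik local structure, to an injectivity statement about $\roi^\flat$ that is checked by valuations (Lemma \ref{lemma_intersection_of_rou}). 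That calculation is the actual content; your proposal would need a replacement for it.

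Finally, the ``descending induction stripping the Breuil--Kisin twist one unit at a time using the coordinates of the framing'' is not an argument the paper uses and is not something I can reconstruct from your description; the twist-comparison is handled in the paper by Corollary \ref{corollary_B} (switching Breuil--Kisin against Tate twists pro-isomorphically) together with the modular analysis of $F-\xi^jR$ in Corollaries \ref{corollary1}--\ref{corollary3}, not by any coordinate-by-coordinate induction.
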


We stress that when $\frak X$ arises as the $p$-adic completion of a smooth scheme over $\roi$ (or even via base change from a smooth scheme over the ring of integers of a discretely valued subfield of $C$), then $Rb_*(\bb Z/p^N\bb Z(j))$ is the usual complex of $p$-adic vanishing cycles $\res i^*R\res j_*(\bb Z/p^N\bb Z(j))$ by a comparison theorem of R.~Huber \cite[3.5.13]{Huber1996}; see Remark \ref{remark1} for the precise statement.

Suitably taking the inverse limit in Theorem \ref{theorem_intro} gives a proof of the following interpretation for $p$-adic vanishing cycles in terms of a twist $\bb A\Omega_\frak X\{j\}=\projlim_{r\sub{ wrt }F}\tilde{W_r\Omega}\{j\}$ of the main $q$-de Rham complex $\bb A\Omega_{\frak X}$ from \cite{Bhatt_Morrow_Scholze2}:

\begin{theorem}(Bhatt--M.--Scholze \cite{Bhatt_Morrow_Scholze3})\label{theorem_BMS}
For each $N,j\ge0$ there is a natural fibre sequence of pro complexes of sheaves on $\frak X_\sub{\'et}$ \[\tau^{\le j}R\nu_*(\bb Z/p^N\bb Z(j))\To \tau^{\le j}(\bb A\Omega_{\frak X}\{j\}/p^N)\xto{1-\phi^{-1}}\tau^{\le j}(\bb A\Omega_{\frak X}\{j\}/p^N).\]
\end{theorem}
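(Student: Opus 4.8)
The plan is to deduce Theorem~\ref{theorem_BMS} from Theorem~\ref{theorem_intro} by passing to the inverse limit over $r$ along the Frobenius maps, using the identification $\bb A\Omega_{\frak X}\{j\}=\projlim_{r\sub{ wrt }F}\tilde{W_r\Omega}_{\frak X}\{j\}$. Since the truncation $\tau^{\le j}$, the homotopy fibre, and the homotopy inverse limit all commute with one another, and the last carries fibre sequences to fibre sequences, the substantive points are to compute what the inverse limit does to the pro complexes appearing in Theorem~\ref{theorem_intro}, and to keep track of the map $F-R$.

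Write $A_r:=\tau^{\le j}(\tilde{W_r\Omega}_{\frak X}\{j\}/p^N)$, so that Theorem~\ref{theorem_intro} exhibits $\tau^{\le j}Rb_*(\bb Z/p^N\bb Z(j))$ as the homotopy fibre, in pro complexes, of $F-R\colon\projlimf_{r\sub{ wrt }R}A_r\to\projlimf_{r\sub{ wrt }R}A_{r-1}$. The family $\{A_r\}_r$ carries two commuting towers of transition maps, the Restriction maps $R$ and the Frobenius maps $F$; they commute, lifting the relation $FR=RF$ on the underlying de Rham--Witt complexes. For such a ``bi-tower'' one may iterate the inverse limit in either order (Fubini for limits), so that the homotopy fibre of $\tilde F-1$ on $\projlim_{r\sub{ wrt }R}A_r$ agrees with the homotopy fibre of $\tilde R-1$ on $\projlim_{r\sub{ wrt }F}A_r$; here $\tilde F$ and $\tilde R$ denote the endomorphisms induced by the Frobenius and by the Restriction maps respectively, and in each description the opposite family of transition maps induces the identity. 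Applying $\projlim_{r\sub{ wrt }R}$ to the fibre sequence of Theorem~\ref{theorem_intro} --- where the Restriction transition maps become the identity and the Frobenius maps induce $\tilde F$, so that $F-R$ induces $\tilde F-1$ --- therefore identifies $\tau^{\le j}Rb_*(\bb Z/p^N\bb Z(j))$ with the homotopy fibre of $\tilde R-1$ on $\projlim_{r\sub{ wrt }F}A_r$.

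It remains to identify this target. Since $\tau^{\le j}$ and reduction modulo $p^N$ commute with $\projlim_{r\sub{ wrt }F}$, we have $\projlim_{r\sub{ wrt }F}A_r=\tau^{\le j}(\bb A\Omega_{\frak X}\{j\}/p^N)$, using $\bb A\Omega_{\frak X}\{j\}=\projlim_{r\sub{ wrt }F}\tilde{W_r\Omega}_{\frak X}\{j\}$; and by the description of the complexes $\tilde{W_r\Omega}_{\frak X}$ as Frobenius-twisted base changes of $\bb A\Omega_{\frak X}$ in \cite{Bhatt_Morrow_Scholze2}, the endomorphism $\tilde R$ of $\bb A\Omega_{\frak X}\{j\}$ is identified with $\phi^{-1}$, so that (up to the irrelevant sign) $\tilde R-1$ becomes $1-\phi^{-1}$. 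Identifying finally $R\nu_*(\bb Z/p^N\bb Z(j))$ with $Rb_*(\bb Z/p^N\bb Z(j))$ as in \cite{Bhatt_Morrow_Scholze2}, we obtain the fibre sequence of the statement, naturally in $N$ and $j$.

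The step I expect to be the main obstacle is this passage between the two inverse limits. Theorem~\ref{theorem_intro} produces its fibre sequence only in the category of pro complexes for the Restriction structure, whereas the Restriction limit $\projlim_{r\sub{ wrt }R}\tilde{W_r\Omega}_{\frak X}\{j\}$ is genuinely different from the Frobenius limit $\bb A\Omega_{\frak X}\{j\}$ that we want, so one must be certain that after taking limits it is the latter that appears and that $F-R$ becomes $1-\phi^{-1}$ there. The symmetric, iterated description of the double limit supplies this, but it relies on the structural input of \cite{Bhatt_Morrow_Scholze2} --- the presentation of the $\tilde{W_r\Omega}_{\frak X}$ in terms of $\bb A\Omega_{\frak X}$, together with the compatibility of $F$, $R$ and the $p$-adic Cartier isomorphism --- which is what pins down $\tilde R=\phi^{-1}$ and makes the two towers of transition maps commute in the first place. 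The remaining ingredients --- commuting $\tau^{\le j}$, the homotopy fibre, reduction modulo $p^N$ and the inverse limit, and naturality in $N$ and $j$ --- are formal.
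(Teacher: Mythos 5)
Your approach is essentially that of the paper's Section~\ref{subsection_nygaard}: both proofs exchange the pro-structure in Theorem~\ref{theorem_intro} from $R$-transitions to $F$-transitions---possible because $F$ and $R$ coincide on the fibre of $F-R$, which is the honest content behind your ``Fubini'' framing---and then take $\op{Rlim}_{r\sub{ wrt }F}$, identifying the result with $\tau^{\le j}(\bb A\Omega_\frak X\{j\}/p^N)$ and the induced endomorphism with $1-\phi^{-1}$. One small inaccuracy to fix: $\tau^{\le j}$ does not commute with $\op{Rlim}_{r\sub{ wrt }F}$, since the $\projlim^1$ contribution can create cohomology in degree $j+1$; what is true, and what the paper's proof actually uses, is that one applies $\tau^{\le j}$ \emph{after} taking $\op{Rlim}$ of the fibre sequence, which does not change the fibre because $\tau^{\le j}R\nu_*(\bb Z/p^N\bb Z(j))$ already lives in degrees $\le j$, so the map $\op{Rlim}_F\tau^{\le j}(\tilde{W_r\Omega}_\frak X\{j\}/p^N)\to\op{Rlim}_F\tau^{\le j}(\tilde{W_{r-1}\Omega}_\frak X\{j\}/p^N)$ is a quasi-isomorphism in degrees $>j$.
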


Theorem \ref{theorem_BMS} was essentially known to the authors at the time of \cite{Bhatt_Morrow_Scholze2}, and we stress that passage through the complexes $\tilde{W_r\Omega}_\frak X\{j\}$ is an unnecessarily complicated proof of the result. Indeed, a more direct proof, as well as a strengthening in terms of the Nygaard filtration on $\bb A\Omega_\frak X\{j\}$, are presented in \cite[\S10]{Bhatt_Morrow_Scholze3}.

A necessary input for Theorem \ref{theorem_intro} is proving that the long exact sequence of $p$-adic vanishing cycles \begin{equation}\cdots \To R^ib_*(\bb Z/p\bb Z)\xTo{p^{N-1}} R^ib_*(\bb Z/p^N\bb Z)\To R^ib_*(\bb Z/p^{N-1}\bb Z)\To\cdots\label{eqn_BK}\end{equation} splits into short exact sequences. As far as we are aware, the only prior known proof of this result is to note that the necessary surjectivity is an immediate corollary of S.~Bloch and K.~Kato's classical result that $R^ib_*(\bb Z/p^N\bb Z)$ is generated by symbols for all $i,N\ge0$ \cite[Corol.~6.1.1]{Bloch1986}; see Remark \ref{remark_BK}. Another new proof, via $q$-de Rham complexes, is given in \cite[\S10]{Bhatt_Morrow_Scholze3}.

Passing to cohomology in Theorem \ref{theorem_intro}, and appealing to the $p$-adic Cartier isomorphism mentioned above, results in a long exact sequence of pro sheaves on $\frak X_\sub{\'et}$ terminating in \begin{equation}\cdots\To R^jb_*(\bb Z/p^N\bb Z(j))\xTo{\iota}\projlimf_{r\sub{ wrt R}}W_r\Omega^j_{\frak X/\roi}/p^N\xto{F-R}\projlimf_{r\sub{ wrt R}}W_{r-1}\Omega^j_{\frak X/\roi}/p^N\To 0\label{eqn_les}.\end{equation} Section \ref{section_Geisser_Hesselholt} explains how this relates to Geisser--Hesselholt's earlier result involving absolute log de Rham--Witt sheaves. The map $F-R:W_r\Omega^j_{\frak X/\roi}\to W_{r-1}\Omega^j_{\frak X/\roi}$ is the subject of the Appendix, where we prove it is surjective for a wide class of formal schemes by using infinitesimal deformation techniques to reduce to the case of a smooth variety over $\bb F_p$, in which case it is a classical theorem of L.~Illusie~\cite{Illusie1979}. 

We stress that the ``$\dlog$ map'' \[\iota:R^jb_*(\bb Z/p^N\bb Z(j))\To W_r\Omega^j_{\frak X/\roi}/p^N\] does not appear to be injective for any fixed $r\ge 1$, owing to $C$ being infinitely ramified. For a discussion of this issue and the unavoidable subtleties which it causes, we refer the reader to Section~\ref{subsection_iota}. To overcome this issue we consider the \'etale sheaf \[\overline{W\Omega}^j_{\frak X/\roi}:=H^j(\op{Rlim}_{r\sub{ wrt }R}\tau^{\le j}\tilde{W_r\Omega_{\frak X}}\{j\}),\] which is a certain extension of $W\Omega^j_{\frak X/\roi}$ by $\projlim^1_{r\sub{ wrt }R}W_r\Omega^{j-1}_{\frak X/\roi}\{1\}$ and which is equipped with a Frobenius endomorphism $F$ lifting the Frobenius on $W\Omega^j_{\frak X/\roi}$. Our final calculation of $p$-adic vanishing cycles as Frobenius-fixed points is as follows:

\begin{theorem}\label{theorem_GH}
For each $N,j\ge0$ there is a natural short exact sequence of \'etale sheaves on $\frak X$: \[0\To R^jb_*(\bb Z/p^N\bb Z(j))\To \overline{W\Omega}^j_{\frak X/\roi}/p^N\xto{1-F}\overline{W\Omega}^j_{\frak X/\roi}/p^N\To 0.\]
\end{theorem}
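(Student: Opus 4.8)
The strategy is to deduce Theorem~\ref{theorem_GH} from Theorem~\ref{theorem_intro} by applying the derived inverse limit $\op{Rlim}_{r\sub{ wrt }R}$ to the fibre sequence of pro complexes stated there (all of whose terms are understood modulo $p^N$). Since $\op{Rlim}$ is exact on derived categories, this yields a fibre sequence of complexes of \'etale sheaves on $\frak X$. The left term of the resulting sequence simplifies at once: the pro-system $\{\tau^{\le j}Rb_*(\bb Z/p^N\bb Z(j))\}_r$ appearing in Theorem~\ref{theorem_intro} is constant, so its derived limit is $\tau^{\le j}Rb_*(\bb Z/p^N\bb Z(j))$ with vanishing $\projlim^1$. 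For the middle and right terms, reindexing $r\mapsto r-1$ identifies $\op{Rlim}_r\tau^{\le j}\tilde{W_{r-1}\Omega}_\frak X\{j\}$ canonically with $\overline{\mathcal C}:=\op{Rlim}_r\tau^{\le j}\tilde{W_r\Omega}_\frak X\{j\}$; under this identification the Restriction maps induce the \emph{identity} of $\overline{\mathcal C}$ --- the elementary fact that a shift along the transition maps of a pro-system is the identity in the pro-category, which descends to $\op{Rlim}$ --- whereas the Frobenius maps induce the endomorphism $F$ of $\overline{W\Omega}^j_{\frak X/\roi}$ of the introduction. As $\op{Rlim}$ preserves cofibre sequences it commutes with reduction modulo $p^N$, and we obtain a natural fibre sequence
\[
\tau^{\le j}Rb_*(\bb Z/p^N\bb Z(j))\To \overline{\mathcal C}/p^N\xTo{1-F}\overline{\mathcal C}/p^N .
\]

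The next step is to identify the cohomology sheaves of $\overline{\mathcal C}$. Each $\tau^{\le j}\tilde{W_r\Omega}_\frak X\{j\}$ sits in degrees $\le j$, so a priori $H^{j+1}(\overline{\mathcal C})=\projlim^1_r H^j(\tilde{W_r\Omega}_\frak X\{j\})=\projlim^1_r W_r\Omega^j_{\frak X/\roi}$ by the $p$-adic Cartier isomorphism; but the Restriction maps of the Langer--Zink pro-system $\{W_r\Omega^j_{\frak X/\roi}\}_r$ are surjective (as for the classical de Rham--Witt complex), so this $\projlim^1$ vanishes, $\overline{\mathcal C}$ lies in degrees $\le j$, and $H^j(\overline{\mathcal C})=\overline{W\Omega}^j_{\frak X/\roi}$ by definition. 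Hence $\overline{\mathcal C}/p^N$ lies in degrees $\le j$ with $H^j(\overline{\mathcal C}/p^N)=\overline{W\Omega}^j_{\frak X/\roi}/p^N$. Passing to the long exact sequence of cohomology sheaves of the fibre sequence above and using that $\tau^{\le j}Rb_*(\bb Z/p^N\bb Z(j))$ vanishes in degrees $>j$, one reads off surjectivity of $1-F$ on $H^j(\overline{\mathcal C}/p^N)$ for free, together with, for each $i\le j$, a short exact sequence
\[
0\To\op{coker}\bigl(1-F\mid H^{i-1}(\overline{\mathcal C}/p^N)\bigr)\To R^ib_*(\bb Z/p^N\bb Z(j))\To\ker\bigl(1-F\mid H^i(\overline{\mathcal C}/p^N)\bigr)\To 0 .
\]
Thus Theorem~\ref{theorem_GH} reduces to proving that $1-F$ is surjective on $H^i(\overline{\mathcal C}/p^N)$ for $i<j$ (it being automatic for $i=j$): granting that, the sequence at $i=j$ becomes $0\to R^jb_*(\bb Z/p^N\bb Z(j))\to\overline{W\Omega}^j_{\frak X/\roi}/p^N\xrightarrow{1-F}\overline{W\Omega}^j_{\frak X/\roi}/p^N\to 0$, naturality being inherited from all the constructions involved.

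I expect this surjectivity to be the one genuinely hard point. Unwinding $H^i(\overline{\mathcal C}/p^N)$ through the $\projlim$--$\projlim^1$ exact sequences and the $p$-adic Cartier isomorphism presents it as an iterated extension built from $\projlim_r$ and $\projlim^1_r$ of the twisted Langer--Zink pro-systems $\{W_r\Omega^i_{\frak X/\roi}\{j-i\}/p^N\}_r$ --- the twists with $j-i\ge1$ being exactly what keeps the $\projlim^1$-terms (such as the $\projlim^1_r W_r\Omega^{j-1}_{\frak X/\roi}\{1\}$ sitting inside $\overline{W\Omega}^j_{\frak X/\roi}$) alive. On each such piece the surjectivity of $1-F$ follows, by a Mittag-Leffler argument in $r$, once one knows: (i) the finite-level maps $F-R$ on $W_r\Omega^i_{\frak X/\roi}\{j-i\}/p^N$ are surjective --- the Appendix's theorem (stated there for $i=j$ and untwisted) together with its evident Breuil--Kisin-twisted variants, proved by deforming to a smooth variety over $\bb F_p$ and invoking Illusie's classical theorem; and (ii) the relevant systems of kernels are Mittag-Leffler --- immediate from Theorem~\ref{theorem_intro}, which identifies them as pro-systems with constant systems of $p$-adic vanishing cycles $R^ib_*(\bb Z/p^N\bb Z(j))$ (this is, ultimately, the splitting of the Bockstein sequence~\eqref{eqn_BK} underlying Theorem~\ref{theorem_intro}, which for an arbitrary smooth $\frak X$ rests on Bloch--Kato's theorem on generation by symbols). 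Combining (i) and (ii) gives the surjectivity of $1-F$ on $H^i(\overline{\mathcal C}/p^N)$, and hence the theorem.
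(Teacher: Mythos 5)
Your overall strategy — apply $\op{Rlim}_{r\text{ wrt }R}$ to the fibre sequence of Theorem~\ref{theorem_intro}, use the reindexing-shift to turn the pro-system into a genuine limit on which $R$ becomes the identity, and read off the statement from the top cohomology sheaf — is the correct starting point and matches the spirit of the paper's proof, and you also correctly reduce the whole thing to one surjectivity: that $1-F$ is surjective on $H^{j-1}(\overline{\mathcal C}/p^N)$ (only the case $i=j-1$ is actually needed, not all $i<j$). However, it is precisely at this last step that your argument departs from the paper and develops a real gap.

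The claim in your point (i) — that the finite-level Breuil--Kisin-twisted maps $F-R\colon W_r\Omega^i_{\frak X/\roi}\{j-i\}/p^N\to W_{r-1}\Omega^i_{\frak X/\roi}\{j-i\}/p^N$ are surjective ``by an evident twisted variant'' of Theorem~\ref{theorem_surj_on_formal_schemes} — is false for $i<j$. After trivialising the twist, this map is $F-\tilde\theta_{r-1}(\xi)^{\,j-i}R$, and $\tilde\theta_{r-1}(\xi)=\tfrac{[\zeta_{p^{r-1}}]-1}{[\zeta_{p^r}]-1}$ is a non-unit; the precise statement the paper proves (Corollary~\ref{corollary_torsion_bound}(iv) and its proof) is only that the image of $F-\tilde\theta_{r-1}(\xi)^{j-i}R$ contains all multiples of $([\zeta_{p^{r-1}}]-1)^{j-i}$, i.e.\ the cokernel is $p$-power-torsion, which is not surjectivity and is not stable mod $p^N$ as $r$ grows. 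Indeed, Section~\ref{subsection_iota} of the paper explicitly discusses the probable \emph{non}-surjectivity of $F-R$ on $\projlim^1_{r\text{ wrt }R}W_r\Omega^{j-1}_{\frak X/\roi}\{1\}$ (that is the whole point of having to use $\overline{W\Omega}^j$ rather than $W\Omega^j$), so a direct Mittag-Leffler argument for surjectivity of $1-F$ on $H^{j-1}(\overline{\mathcal C}/p^N)$ along the lines you sketch is genuinely delicate and cannot simply cite a twisted Appendix theorem.

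The paper avoids this by a different device: it writes down the fibre sequence of Theorem~\ref{theorem_intro} twice, once for the pair $(\tau^{\le j},\{j\})$ and once for $(\tau^{\le j-1},\{j-1\}(1))$ (the second being the first for $j-1$ after a global Tate twist, so that both have $R\nu_*(\bb Z/p^N\bb Z(j))$ on the left), and then takes vertical cofibres before applying $\op{Rlim}$. The cofibre of $\tau^{\le j-1}Rb_*(\bb Z/p^N(j))\to\tau^{\le j}Rb_*(\bb Z/p^N(j))$ is concentrated in degree $j$, and the cofibre on the de Rham--Witt side has, after $\op{Rlim}$, the same $H^j$ as $\overline{W\Omega}^j_{\frak X/\roi}/p^N$ because the offending $\projlim^1$ terms that appear carry a \emph{Tate} twist $(1)$ rather than a Breuil--Kisin twist $\{1\}$, so that the Restriction transition maps on them are the untwisted surjective ones and the $\projlim^1$ vanishes. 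This is the new idea your proposal is missing, and it is what replaces the surjectivity claim you are trying to prove directly.
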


\subsubsection*{Layout of the paper}
Section \ref{section_prelim} is a brief recollection of the complexes of sheaves $\tilde{W_r\Omega}_\frak X$ from \cite{Bhatt_Morrow_Scholze2}, as well as Breuil--Kisin twists; the reader will likely have to consult \cite{Bhatt_Morrow_Scholze2} for further details.

The proofs of the main theorems are contained in Section \ref{section_main}. In Section \ref{subsection_Z} we introduce sheaves \[Z_r:=\ker(W_r(\hat\roi_X^+)\xto{R-F} W_{r-1}(\hat\roi_X^+))\] on the pro-\'etale site of $X$ and show that, for any $N\ge1$, the pro sheaf $\projlimf_rZ_r/p^NZ_r$ is precisely the constant sheaf $\bb Z/p^N\bb Z$. This allows us to calculate $p$-adic vanishing cycles as the Frobenius-fixed points of the pushforward of the pro sheaf $\projlimf_{r\sub{ wrt }R}W_r(\hat\roi_X^+)$. The next step is to show that this pushforward may be replaced by $\projlimf_{r\sub{ wrt }R}\tilde{W_r\Omega}_\frak X$; this is proved in Section \ref{subsection_more_as} by analysing various Artin--Schreier maps on their difference.

This almost completes the proof of Theorem \ref{theorem_intro}, except for a seemingly technical issue that modding out by $p^N$ does not a priori commute with truncation $\tau^{\le j}$ because of possible existence of $p$-torsion. In Section \ref{section_end} we present various equivalent conditions for this obstruction to vanish, one of which is exactly (\ref{eqn_BK}) breaking into short exact sequence. We then verify one the conditions, thereby completing the proof of Theorem \ref{theorem_intro}.

The long exact sequence (\ref{eqn_les}) follows by passing to cohomology sheaves, and an analysis of the final five terms of this sequence leads to Theorem \ref{theorem_GH}; this is done in Section \ref{subsection_extension}. Disjointly, taking the limit over Theorem \ref{theorem_intro} with respect to the Frobenius (rather than Restriction) maps formally establishes Theorem \ref{theorem_BMS}; this is done in Section \ref{subsection_nygaard}.

Finally, Section \ref{addendum} is included to answer informally certain questions which arise from our results, concerning non-injectivity of the map $\iota$ and the probable relation to Geisser--Hesselholt's result.

\subsubsection*{Acknowledgements}
I thank Peter Scholze for numerous discussions about $p$-adic vanishing cycles and filtrations on $\bb A\Omega_\frak X$, which were invaluable when proving Theorem \ref{theorem_intro}.

\section{Preliminaries}\label{section_prelim}
Throughout the article we adopt the following set-up. Let $C$ be a perfectoid field of mixed characteristic containing all $p$-power roots of unity, and $\frak X$ a smooth formal scheme over its ring of integers $\roi$; denote by $X$ its rigid analytic generic fibre over $C$. Let $\nu:X_\sub{pro\'et}\to \frak X_\sub{\'et}$ be the projection map from the pro-\'etale site of $X$ to the \'etale site of $\frak X$ (i.e., the \'etale site of its special fibre), and let $\hat\roi_X^+$ be the completed integral structure sheaf on $X_\sub{pro\'et}$. Here we use the pro-\'etale site of a locally Noetherian adic space as defined in \cite{Scholze2013}, and refer the reader also to \cite[\S5.1]{Bhatt_Morrow_Scholze2} for a summary.

\subsection{Breuil--Kisin twists}\label{subsection_BK}
We begin with a short recollection of Breuil--Kisin twists and refer the reader to \cite[\S3\&\S4.3]{Bhatt_Morrow_Scholze2} for details.

Let $\bb A_\sub{inf}:=W(\roi^\flat)$ be the infinitesimal period ring, $\theta:\bb A_\sub{inf}\to\roi$ Fontaine's map, and $\theta_r:\bb A_\sub{inf}(\roi)\to W_r(\roi)$ the generalisations of Fontaine's map for $r\ge1$; set $\tilde\theta_r:\bb A_\sub{inf}\to W_r(\roi)$ for $r\ge1$, which are compatible with the Frobenius map $F:W_r(\roi)\to W_{r-1}(\roi)$ and induce an isomorphism $\bb A_\sub{inf}\isoto\projlim_{r\sub{ wrt }F}W_r(\roi)$. In particular, each map  $\theta_r$ is surjective; moreover, its kernel is generated by a certain non-zero-divisor $\tilde\xi_r$, and therefore \[W_r(\roi)\{j\}:=(\ker\tilde\theta_r)^j/(\ker\tilde\theta_r)^{j+1}=\tilde\xi_r^j\bb A_{\sub{inf}}/\tilde\xi_r^{j+1}\bb A_{\sub{inf}}\] is an invertible $W_r(\roi)$-module for each $j\ge1$. These are the the {\em Breuil--Kisin twists} of $W_r(\roi)$.

Let $\zeta_p,\zeta_{p^2},\dots\in C$ be a compatible sequence of $p$-power roots of unity, $\ep:=(1,\zeta_p,\zeta_{p^2},\dots)\in\roi^\flat$, and let $\mu:=[\ep]-1\in\bb A_\sub{inf}$ be the unique element satisfying $\tilde\theta_r(\xi)=[\zeta_{p^{r}}]-1$ for all $r\ge1$. Let $\xi:=1+[\ep^{1/p}]+\cdots+[\ep^{1/p}]^{p-1}\in\bb A_\sub{inf}$ be the unique element satisfying $\tilde\theta_r(\xi)=\tfrac{[\zeta_{p^{r}}]-1}{[\zeta_{p^{r+1}}]-1}$ for all $r\ge1$; then $\xi$ is a generator of the kernel of Fontaine's map $\theta:\bb A_\sub{inf}\to\roi$, and the most convenient choice of generator of $\ker\tilde\theta_r$ for our purposes is $\tilde\xi_r:=\phi(\xi)\cdots\phi^r(\xi)$. We will also use $\xi_r:=\xi\phi^{-1}(\xi)\cdots \phi^{1-r}(\xi)$, which is a generator of $\ker\theta_r$.

The maps $\op{id},\phi^{-j}:\bb A_{\sub{inf}}\to\bb A_{\sub{inf}}$ induce maps $F_\sub{pre},R:W_r(\roi)\{j\}\to W_{r-1}(\roi)\{j\}$, which are semi-linear with respect to $F,R:W_r(\roi)\to W_{r-1}(\roi)$. Then $F_\sub{pre}$ has image $p^jW_{r-1}(\roi)\{1\}$, while $R$ has image $\tilde\theta_{r-1}(\xi)W_{r-1}(\roi)\{1\}$; set $F:=F_\sub{pre}/p:W_r(\roi)\{1\}\onto W_{r-1}(\roi)\{1\}$. Explicitly, if we trivialise $W_r(\roi)\{j\}$ via its basis element $\tilde\xi_r^j\op{mod} \tilde\xi_r^{j+1}\in W_r(\roi)\{j\}$, then the maps $F,R:W_r(\roi)\{j\}\to W_{r-1}(\roi)\{j\}$ identify respectively with $F,\,\tilde\theta_{r-1}(\xi^j)R:W_r(\roi)\to W_{r-1}(\roi)$; we will make frequent use of this trivialisation, while recalling that $\tilde\theta_{r-1}(\xi)=\tfrac{[\zeta_{p^{r-1}}]-1}{[\zeta_{p^{r}}]-1}$, to carry out calculations.

The submodule $([\zeta_{p^r}]-1)^jW_r(\roi)\{j\}$ of $W_r(\roi)\{j\}$ may be naturally identified with the Tate twist $W_r(\roi)(j)$, thereby providing a useful identification \[W_r(\roi)\{j\}=\tfrac{1}{([\zeta_{p^r}]-1)^j}W_r(\roi)(1)\] under which $F,R$ on the left correspond to $F,R$ on the right (obtained by localising $F,R$ on $W_r(\roi)$). This is proved by taking Tate modules of $\dlog:W_r(\roi)^\times\to\Omega^1_{W_r(\roi)/\bb Z_p}$.

More generally, suppose that for each $r\ge1$ we are given a $W_r(\roi)$-module $C_r$ (or complex, or sheaf, or complex of sheaves in a derived category, etc.) together with Frobenius and Restriction maps $F,R:C_r\to C_{r-1}$ which are compatible with $F,R:W_r(\roi)\to W_{r-1}(\roi)$. Then we may consider the twists $C_r\{j\}:=C_r\otimes_{W_r(\roi)}W_r(\roi)\{j\}$ and the twisted Frobenius and Restriction maps \[F:=F\otimes F,\quad R:=R\otimes R:C_r\{j\}\To C_{r-1}\{j\}.\] Trivialising as above, this explicitly means the maps $F, \tilde\theta_{r-1}(\xi)^jR:C_r\to C_{r-1}$. We will apply this in particular to the following possibilities of $C_r$: \[W_r(\hat\roi_X^+)\qquad R\nu_*W_r(\hat\roi_X^+)\qquad W_r\Omega^j_{\frak X/\roi}\]

The left-most object $W_r(\hat\roi_X^+)$ denotes the sheaf of rings on $X_\sub{pro\'et}$ obtained by taking Witt vectors of length $r$ of the completed integral structure sheaf $\hat\roi_X^+$, which is equipped with its usual Frobenius and Restriction map $F,R: W_r(\hat\roi_X^+)\to W_{r-1}(\hat\roi_X^+)$. In this case we can alternatively define the twist $W_r(\hat\roi_X^+)\{j\}=W_r(\hat\roi_X^+)\otimes_{W_r(\roi)}W_r(\roi)\{j\}$ as we did for $\roi$. Firstly the infinitesimal period sheaf is $\bb A_{\sub{inf},X}:=W(\hat\roi_{X^\flat}^+)$, where $\hat\roi_{X^\flat}^+$ is the tilted integral structure sheaf on $X_\sub{pro\'et}$; the surjections $\theta_r,\tilde\theta_r$ extend to $\bb A_{\sub{inf},X}\to W_r(\hat\roi_X^+)$ for each $r\ge1$, and their kernels are still defined by the non-zero divisors $\xi_r,\tilde\xi_r$ respectively; this shows that $W_r(\hat\roi_X^+)=(\ker\tilde\theta_r)^j/(\ker\tilde\theta_r)^{j+1}=\tilde\xi_r^j\bb A_{\sub{inf},X}/\tilde\xi_r^{j+1}\bb A_{\sub{inf},X}$.

\subsection{The sheaves $\tilde{W_r\Omega}_{\frak X}$}\label{section_bk2}
As in \cite[\S9]{Bhatt_Morrow_Scholze2} (to be precise, in \cite{Bhatt_Morrow_Scholze2} we worked throughout with the projection map $X_\sub{pro\'et}\to\frak X_\sub{Zar}$ rather than $X_\sub{pro\'et}\to\frak X_\sub{\'et}$, but the two resulting definitions of $\tilde{W_r\Omega}_{\frak X}$ coincide since the Zariski definition base changes correctly under any \'etale map; see \cite[Lem.~9.9 \& Corol.~9.11]{Bhatt_Morrow_Scholze2}, and also \cite[Eg.~4.21]{CesnaviciusKoshikawa2017}), we define \[\tilde{W_r\Omega}_{\frak X}:=L\eta_{[\zeta_{p^r}]-1}R\nu_*W_r(\hat\roi_X^+)\in D(W_r(\roi_\frak X)),\] where $L\eta$ is the d\'ecalage functor of \cite[\S6]{Bhatt_Morrow_Scholze2} with respect to the ideal generated by $[\zeta_{p^r}]-1\in W_r(\roi)$. Recall that there exists a canonical map $\tilde{W_r\Omega}_\frak X\to R\nu_*W_r(\hat\roi_X)$ \cite[Lem.~6.10]{Bhatt_Morrow_Scholze2}. We will systematically use the Breuil--Kisin twists \[\tilde{W_r\Omega}_{\frak X}\{j\}=\tilde{W_r\Omega}_{\frak X}\otimes_{W_r(\roi)}W_r(\roi)\{j\}=L\eta_{[\zeta_{p^r}]-1}R\nu_*W_r(\hat\roi_X)\{j\}\] of these complexes of sheaves in order to formulate certain statements in a natural (in particular, Galois equivariant when $\frak X$ is defined over a subfield of $C$) fashion. 

The Frobenius $F:W_r(\hat\roi_X^+)\{j\}\to W_{r-1}(\hat\roi_X^+)\{j\}$ formally induces a morphism after applying $R\nu_*$, and then ``restricts to'' the d\'ecalages since $F:W_r(\roi)\to W_{r-1}(\roi)$ sends $[\zeta_{p^r}]-1$ to $[\zeta_{p^{r-1}}]-1$:
\begin{equation}\xymatrix{
\tilde{W_r\Omega}_{\frak X}\{j\}\ar[r]^{\exists\,F}\ar[d]&\tilde{W_{r-1}\Omega}_{\frak X}\{j\}\ar[d]\\
R\nu_*W_r(\hat\roi_X^+)\{j\}\ar[r]_F& R\nu_*W_{r-1}(\hat\roi_X^+)\{j\}
}\label{squareF}\end{equation}
This may then be restricted to the cohomological truncations $F:\tau^{\le j}\tilde{W_r\Omega}_{\frak X}\{j\}\to\tau^{\le j}\tilde{W_{r-1}\Omega}_{\frak X}\{j\}$.

On the other hand, although the Restriction $R:W_r(\hat\roi_X^+)\{j\}\to W_{r-1}(\hat\roi_X^+)\{j\}$ similarly induces a morphism after applying $R\nu_*$ this does not restrict to $\tilde{W_r\Omega}_{\frak X}\{j\}\to \tilde{W_{r-1}\Omega}_{\frak X}\{j\}$, but only after taking the cohomological truncation:
\begin{equation}\xymatrix{
\tau^{\le j}\tilde{W_r\Omega}_{\frak X}\{j\}\ar[r]^{\exists\,R}\ar[d]&\tau^{\le j}\tilde{W_{r-1}\Omega}_{\frak X}\{j\}\ar[d]\\
R\nu_*W_r(\hat\roi_X^+)\{j\}\ar[r]_R& R\nu_*W_{r-1}(\hat\roi_X^+)\{j\}
}\label{squareR}\end{equation}
To prove this, note that we may factor the original $R$ as a \[W_r(\hat\roi_X^+)\{j\}\xto{\tfrac{1}{\tilde\theta_{r-1}(\xi)^j}R} W_{r-1}(\hat\roi_X^+)\{j\}\xto{\tilde\theta_{r-1}(\xi)^j} W_{r-1}(\hat\roi_X^+)\{j\},\] and then define the desired restriction map on the truncations to be the composition
\[\xymatrix{
\tau^{\le j}\tilde{W_r\Omega}_\frak X\{j\}\ar[d]\ar@{-->}[r]^R & \tau^{\le j}\tilde{W_{r-1}\Omega}_{\frak X}\{j\}\\
\tau^{\le j}R\nu_*W_r(\hat\roi_X^+)\{j\}\ar[r]_{\tfrac{1}{\tilde\theta_{r-1}(\xi)^j}R} & \ar[u]_{``\tilde\theta_{r-1}(\xi)^j"}\tau^{\le j}R\nu_*W_{r-1}(\hat\roi_X^+)\{j\}
}\]
where the right vertical arrow is the natural transformation of \cite[Lem.~6.9]{Bhatt_Morrow_Scholze2}.

Before continuing, we recall that the cohomology of $\tilde{W_r\Omega}_\frak X$, as well as the operators $R,F$ on its twists, are related to Langer--Zink's relative de Rham--Witt complex \cite{LangerZink2004} as follows:

\begin{theorem}[{\cite[\S11]{Bhatt_Morrow_Scholze2}}]\label{theorem_p-adic_Cartier}
There are natural isomorphisms of sheaves $H^j(\tilde{W_r\Omega}_\frak X\{j\})\cong W_r\Omega^j_{\frak X/\roi}$ on $\frak X_\sub{\'et}$ for all $j\ge0$ and $r\ge1$, such that $R,F$ on the left side (as constructed immediately above) correspond to the usual de Rham--Witt $R,F$ on the right side.
\end{theorem}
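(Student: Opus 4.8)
The statement is \cite[\S11]{Bhatt_Morrow_Scholze2}; I outline the argument. Since both sides are compatible with \'etale localisation on $\frak X$ — the left-hand side because $L\eta$ commutes with flat base change and the formation of $R\nu_*W_r(\hat\roi_X^+)$ is \'etale-local \cite[Lem.~9.9, Corol.~9.11]{Bhatt_Morrow_Scholze2}, the right-hand side by construction of the relative de Rham--Witt complex \cite{LangerZink2004} — the assertion is local, so we may assume $\frak X=\Spf R$ admits a framing, i.e.\ an \'etale morphism $\roi\langle T_1^{\pm1},\dots,T_d^{\pm1}\rangle\to R$. Granting a natural comparison map, which by compatibility with base change need then only be checked on the torus, the plan is: (1) construct the comparison map $W_r\Omega^j_{\frak X/\roi}\to H^j(\tilde{W_r\Omega}_{\frak X}\{j\})$; (2) compute both sides explicitly for $R=\roi\langle T_1^{\pm1},\dots,T_d^{\pm1}\rangle$ and check the map is an isomorphism there; (3) check compatibility with $R$ and $F$.

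For (1) I would show that the bigraded family $\big\{\bigoplus_j H^j(\tilde{W_r\Omega}_{\frak X}\{j\})\big\}_{r\ge1}$ is a $V$-pro-complex over $\roi_{\frak X}$ in the sense of Langer--Zink: the $\roi_{\frak X}$-algebra in degree $0$ is $H^0(\tilde{W_r\Omega}_{\frak X})=W_r(\roi_{\frak X})$, the multiplication is the cup product (using that $\tilde{W_r\Omega}_{\frak X}$ is a commutative algebra in the derived category, $L\eta$ and $R\nu_*$ being lax monoidal), the Restriction and Frobenius are those of \eqref{squareR} and \eqref{squareF}, the differential is a Bockstein attached to the d\'ecalage, and the Verschiebung $\tilde{W_{r-1}\Omega}_{\frak X}\to\tilde{W_r\Omega}_{\frak X}$ is induced by the Witt Verschiebung $W_{r-1}(\hat\roi_X^+)\to W_r(\hat\roi_X^+)$ after applying $R\nu_*$ and $L\eta$. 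Once the Langer--Zink axioms are verified, the universal property of $W_r\Omega^\bullet_{\frak X/\roi}$ supplies the comparison map, functorially in $\frak X$.

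For (2) I would follow the local analysis of \cite[\S8--9]{Bhatt_Morrow_Scholze2}. Over the perfectoid pro-\'etale cover $R_\infty$ obtained by adjoining all $p$-power roots of the $T_i$, with Galois group $\bb Z_p(1)^d$, the pushforward $R\nu_*W_r(\hat\roi_X^+)$ is computed by the Koszul complex $K_{W_r(\hat R^+_\infty)}(\gamma_1-1,\dots,\gamma_d-1)$. Decompose $W_r(\hat R^+_\infty)$ into weight spaces for the residual torus action: on the weight-$0$ part, which is $W_r(R)$, each $\gamma_i-1$ acts by $0$; on a nonzero weight some $\gamma_i-1$ acts by a non-zero-divisor which, at the level of $\bb A_\sub{inf}$, divides $\mu=[\ep]-1$ (the quotient being, up to a unit, a product of $\phi$-twists of $\xi$, i.e.\ an element $\xi_s$), so that after applying $L\eta_{[\zeta_{p^r}]-1}=L\eta_{\tilde\theta_r(\mu)}$ the nonzero-weight contributions become complexes of $W_r(\roi)/([\zeta_{p^r}]-1)$-modules and hence vanish — using that $L\eta$ kills $f$-torsion complexes and the explicit behaviour of the d\'ecalage on Koszul complexes \cite[\S6--7]{Bhatt_Morrow_Scholze2}. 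One is left with $L\eta_{[\zeta_{p^r}]-1}$ of the weight-$0$ Koszul complex $K_{W_r(R)}(0,\dots,0)$, whose $j$-th cohomology is the free $W_r(R)$-module $\bigwedge^j(W_r(R)^d)$; after the Breuil--Kisin twist $\{j\}$ its standard generators are identified with the $\dlog[T_i]$, giving $H^j(\tilde{W_r\Omega}_{\frak X}\{j\})\cong\bigwedge^j_{W_r(R)}\big(\bigoplus_i W_r(R)\dlog[T_i]\big)$. This is exactly $W_r\Omega^j_{R/\roi}$ for the torus (by the explicit description of the de Rham--Witt complex of $\roi[T^{\pm}]$, or base change from $\bb Z[T^{\pm}]$), and one checks the comparison map of (1) realises this identification. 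For (3), the operators $R,F$ from \eqref{squareR}--\eqref{squareF} are matched with the de Rham--Witt $R,F$ via the formulas $R\,\dlog[T_i]=\dlog[T_i]$ and $F\,\dlog[T_i]=\dlog[T_i]$ together with the behaviour of the Witt Restriction and Frobenius on weight spaces; the twist bookkeeping involving the factors $\tilde\theta_{r-1}(\xi)^j$ is routine but must be tracked.

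The main obstacle is twofold. First, the d\'ecalage computation on the Koszul complex must be carried out uniformly over all weights and all $r$, tracking precisely which torsion $L\eta_{[\zeta_{p^r}]-1}$ divides away; this is where the divisibility $\gamma_i-1\mid\mu$ on nonzero weights, and the relation of the quotients to the elements $\xi_s$, enter in an essential way. Second, and more seriously, obtaining the comparison map \emph{naturally} forces one to endow the cohomology sheaves with a genuine $V$-pro-complex structure, and producing the Verschiebung and the differential at the level of the $L\eta$-modified complexes — rather than on $R\nu_*W_r(\hat\roi_X^+)$, where they are transparent — is the delicate part. (An alternative to the universal-property route is to induct on $r$ with base case $r=1$, where $H^j(\tilde{W_1\Omega}_{\frak X}\{j\})\cong\Omega^j_{\frak X/\roi}$ is a form of the Hodge--Tate comparison; but the inductive step is complicated by the non-exactness of $L\eta$, forcing a careful analysis of how $L\eta$ interacts with the short exact sequence $0\to W_{r-1}(\hat\roi_X^+)\xrightarrow{V}W_r(\hat\roi_X^+)\to\hat\roi_X^+\to0$.)
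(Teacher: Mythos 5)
Your overall strategy — reduce to a framed torus by \'etale localization, build the comparison map via the Langer--Zink universal property on the family $\{\bigoplus_jH^j(\tilde{W_r\Omega}_{\frak X}\{j\})\}_r$, then check it is an isomorphism by an explicit Koszul/d\'ecalage computation — is indeed the approach of \cite[\S11]{Bhatt_Morrow_Scholze2}, and you correctly identify the construction of $d$ (Bockstein) and $V$ at the level of the d\'ecalage as the delicate part of step (1). However, the explicit torus computation in step (2) contains a genuine error: it is \emph{not} true that after applying $L\eta_{[\zeta_{p^r}]-1}$ only the weight-$0$ part of the Koszul complex survives, and in particular $H^j(\tilde{W_r\Omega}_{\frak X}\{j\})$ is \emph{not} the free $W_r(R)$-module $\bigwedge^jW_r(R)^d$ when $r>1$. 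Indeed, $W_r\Omega^j_{R/\roi}$ for $R=\roi\langle T^{\pm1}\rangle$ is strictly larger than $W_r(R)\cdot\dlog[T]$-exterior powers: already for $r=2$, $d=1$, $j=1$, the element $dV[T]$ cannot be written as $a\,\dlog[T]$ with $a\in W_2(R)$, since applying $F$ and using $FdV=d$ would force $F(a)=[T]$, i.e.\ $a_0^p+pa_1=T$ with $a_0,a_1\in R$, which has no solution because $T$ is not a $p$-th power in $R/pR$. The extra generators are supplied precisely by the \emph{non-integral} weights, which do \emph{not} die under d\'ecalage: e.g.\ on the weight $a=1/p$ piece (for $r=2$, $d=1$) $\gamma-1$ acts by $[\zeta_p]-1=([\zeta_{p^2}]-1)\cdot u$ with $u=[\zeta_{p^2}]^{p-1}+\cdots+1$, and since $u$ is a \emph{non-unit} in $W_2(\roi)$ the complex $W_2(\roi)\xto{[\zeta_p]-1}W_2(\roi)$ has $L\eta_{[\zeta_{p^2}]-1}$ equal to $W_2(\roi)\xto{[\zeta_p]-1}([\zeta_{p^2}]-1)W_2(\roi)$, with nonzero $H^1\cong W_2(\roi)/u$.

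So your parenthetical "$L\eta$ kills $f$-torsion complexes" is applied to complexes whose cohomology is \emph{not} $f$-torsion: on a fractional weight with denominator $p^s$ the relevant operator is (a unit times) $[\zeta_{p^{s'}}]-1$ for some $s'<r$, which is \emph{more} divisible by $[\zeta_{p^r}]-1$, not less. The correct statement (this is the content of \cite[\S9--10]{Bhatt_Morrow_Scholze2}) is that each weight $a$ with $u(a)<r$ contributes, after d\'ecalage, a summand isomorphic to $V^{u(a)}W_{r-u(a)}(\roi)$ times some exterior-algebra combinatorics, and the matching of these pieces with the Langer--Zink description of $W_r\Omega^j_{R/\roi}$ quoted in the proof of Lemma \ref{lemma_p_torsion_free_of_quotients} is exactly where the real work of the local computation lies. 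As written, your step (2) would produce a strictly smaller module than $W_r\Omega^j_{R/\roi}$, so the comparison map of step (1) could not possibly land isomorphically. The rest of the sketch (\'etale reduction, universal-property construction, $F$--$R$ compatibility up to the $\tilde\theta_{r-1}(\xi)^j$ twist factors) is sound in outline.
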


The theorem has a number of easy consequences which will be needed, so we explicitly include them here:

\begin{corollary}\label{corollary_torsion_bound}
\begin{enumerate}
\item The cohomology sheaves of $\tilde{W_r\Omega}_\frak X$ are $p$-torsion-free.
\item The canonical map $\tilde{W_r\Omega}_\frak X\to R\nu_*W_r(\hat\roi_X^+)$ induces an injection on each cohomology sheaf $H^i(-)$, with image $([\zeta_{p^r}]-1)^iR^i\nu_*W_r(\hat\roi_X^+)$.
\item The $[\zeta_{p^r}]-1$-power torsion (equivalently, the $p$-power-torsion) in $R^i\nu_*W_r(\hat\roi_X^+)$ is killed by $[\zeta_{p^r}]-1$ (hence by $p^r$).
\item For any $j\ge i\ge0$, the cokernel of $F-R:R^i\nu_*W_r(\hat\roi_X^+)\{j\}\to R^i\nu_*W_{r-1}(\hat\roi_X^+)\{j\}$ is killed by a power of $p$.
\end{enumerate}
\end{corollary}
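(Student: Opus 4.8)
The plan is to derive all four assertions from Theorem~\ref{theorem_p-adic_Cartier}, the standard behaviour of the d\'ecalage functor $L\eta$ recalled in \cite[\S6]{Bhatt_Morrow_Scholze2}, and elementary properties of Witt vectors. Throughout set $C_r:=R\nu_*W_r(\hat\roi_X^+)$ and $f:=[\zeta_{p^r}]-1\in W_r(\roi)$, a nonzerodivisor, so that by definition $\tilde{W_r\Omega}_\frak X=L\eta_fC_r$. A preliminary observation disposes of the parenthetical clauses in the statement: $f$ and $p$ have the same radical in $W_r(\roi)$, and moreover $p^r\in fW_r(\roi)$ — both checked on ghost components — so that in any $W_r(\roi)$-module $f$-power torsion coincides with $p$-power torsion, $p$-torsion-freeness implies $f$-torsion-freeness, and anything killed by $f$ is killed by $p^r$. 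For (i): since $W_r(\roi)\{i\}$ is an invertible $W_r(\roi)$-module, Theorem~\ref{theorem_p-adic_Cartier} identifies $H^i(\tilde{W_r\Omega}_\frak X)$ with $W_r\Omega^i_{\frak X/\roi}\otimes_{W_r(\roi)}W_r(\roi)\{-i\}$, which reduces the claim to the $p$-torsion-freeness of the relative de Rham--Witt sheaves of Langer--Zink \cite{LangerZink2004}, valid since $\roi$ is $p$-torsion-free and $\frak X$ is smooth.

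Parts (ii) and (iii) are then formal consequences of (i), obtained by tracing the $f$-torsion-freeness of the cohomology sheaves of $\tilde{W_r\Omega}_\frak X=L\eta_fC_r$ through the explicit description of $H^*(L\eta_f-)$ in \cite[\S6]{Bhatt_Morrow_Scholze2}. On one side, the canonical map $\tilde{W_r\Omega}_\frak X\to C_r$ induces on $H^i$ a map with image $f^iH^i(C_r)=([\zeta_{p^r}]-1)^iR^i\nu_*W_r(\hat\roi_X^+)$ whose kernel is killed by $f^{i-1}$; being a subsheaf of the $f$-torsion-free sheaf $H^i(\tilde{W_r\Omega}_\frak X)$, this kernel vanishes, which gives (ii). On the other side, the $f$-torsion-freeness of $H^i(\tilde{W_r\Omega}_\frak X)$ translates, again through the d\'ecalage, into the equality $H^i(C_r)[f^2]=H^i(C_r)[f]$ of subsheaves of $R^i\nu_*W_r(\hat\roi_X^+)$; a two-line induction on $k$ — if $f^kx=0$ then $f^2(f^{k-2}x)=0$, so $f^{k-2}x\in H^i(C_r)[f^2]=H^i(C_r)[f]$ and hence $f^{k-1}x=0$ — then gives $H^i(C_r)[f^k]=H^i(C_r)[f]$ for all $k\ge1$, so that the $f$-power torsion (equivalently the $p$-power torsion) in $R^i\nu_*W_r(\hat\roi_X^+)$ is killed by $f=[\zeta_{p^r}]-1$, hence by $p^r$. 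That is (iii).

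For (iv) I would argue directly on $W_r(\hat\roi_X^+)$ after inverting $p$. Trivialising the Breuil--Kisin twist as in Section~\ref{section_bk2}, the map in question is $R^i\nu_*$ applied to $F-\tilde\theta_{r-1}(\xi)^jR\colon W_r(\hat\roi_X^+)\to W_{r-1}(\hat\roi_X^+)$. After inverting $p$, the ghost map identifies $W_r(\hat\roi_X^+)[1/p]$ with $(\hat\roi_X^+[1/p])^{\oplus r}$, with respect to which $F$ is the projection forgetting the first coordinate while $\tilde\theta_{r-1}(\xi)^jR$ is the projection forgetting the last coordinate followed by the fixed diagonal matrix whose entries are the (integral) ghost components of $\tilde\theta_{r-1}(\xi)^j$. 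The resulting map $(\hat\roi_X^+[1/p])^{\oplus r}\to(\hat\roi_X^+[1/p])^{\oplus(r-1)}$ is a surjection of sheaves admitting an explicit $\hat\roi_X^+[1/p]$-linear section, and because its matrix entries are integral that section has $p$-denominators bounded by the universal $p$-denominator $p^{c(r)}$ for inverting the ghost map of length-$r$ Witt vectors. Thus $p^{c(r)}$ times this section restricts to an honest map $W_{r-1}(\hat\roi_X^+)\to W_r(\hat\roi_X^+)$ whose composite with $F-\tilde\theta_{r-1}(\xi)^jR$ is multiplication by $p^{c(r)}$; applying $R^i\nu_*$ shows that the image of $F-R$ contains $p^{c(r)}R^i\nu_*W_{r-1}(\hat\roi_X^+)\{j\}$, so its cokernel is killed by $p^{c(r)}$. (The hypothesis $j\ge i$ is not actually needed.)

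The only genuinely substantive point is the torsion control underlying (ii) and (iii): tracing $f$-torsion-freeness of $\tilde{W_r\Omega}_\frak X$ through the d\'ecalage yields only the equality $H^i(C_r)[f^2]=H^i(C_r)[f]$, and the passage from this to the statement that \emph{all} the $f$-power torsion of $R^i\nu_*W_r(\hat\roi_X^+)$ is killed by $f$ — not merely by $f^i$ — is exactly the short induction above; this is the source of the sharp annihilator in (iii). Everything else — parts (i) and (iv) and the formal properties of $L\eta_f$ — is routine once Theorem~\ref{theorem_p-adic_Cartier} is available. (One could instead deduce (iii) from the fact that the $p$-power torsion in $R^i\nu_*\hat\roi_X^+$ is almost zero, combined with the $V$-filtration on $W_r(\hat\roi_X^+)$, but the argument from (i) is shorter.)
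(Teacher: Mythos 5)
Your arguments for parts (i)--(iii) are essentially the paper's: (i) reduces via Theorem~\ref{theorem_p-adic_Cartier} to $p$-torsion-freeness of $W_r\Omega^i_{\frak X/\roi}$ (though you assert this as a known property of Langer--Zink's complexes, whereas the paper actually proves it by \'etale base change to $W_r\Omega^i_{\roi[\ul T^{\pm1}]/\roi}$ and the explicit description of the latter); (ii) is the same appeal to \cite[Lem.~6.9]{Bhatt_Morrow_Scholze2} plus (i); and (iii) is a spelled-out version of the paper's appeal to \cite[Lem.~6.4]{Bhatt_Morrow_Scholze2}, which says $H^i(C_r)/H^i(C_r)[f]\cong H^i(L\eta_fC_r)$ --- your equality $H^i(C_r)[f^2]=H^i(C_r)[f]$ plus induction is the unwinding of ``$H^i(C_r)/H^i(C_r)[f]$ is $f$-torsion-free''.

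Your treatment of (iv) is genuinely different and, I believe, correct. The paper deduces (iv) by combining diagrams (\ref{squareF'}) and (\ref{squareR'}) with the surjectivity of $F-R$ on $W_r\Omega^i_{\frak X/\roi}$, which is Theorem~\ref{theorem_surj_on_formal_schemes} of the Appendix (ultimately resting on Illusie's result over $\bb F_p$), and this is why the paper requires $j\ge i$: those diagrams only exist in cohomological degrees $\le j$. You instead build an explicit additive section of $F-\tilde\theta_{r-1}(\xi)^jR$ at the level of pro-\'etale sheaves via ghost coordinates, whose $p^{c(r)}$-multiple is integral, and then apply the functor $R^i\nu_*$ to the identity $(F-\al R)\circ\tilde s=p^{c(r)}$. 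This is more elementary, entirely avoids the Appendix, and indeed does not need $j\ge i$; the paper's route does give a concrete exponent $rj$ in terms of $j$, but you obtain the uniform bound $c(r)=r-1$. The one point you leave implicit and should flag is that ``$p^{c(r)}$ times the section is an honest map $W_{r-1}(\hat\roi_X^+)\to W_r(\hat\roi_X^+)$'' requires not merely that the ghost-coordinate section has integral entries but that $p^{r-1}(\hat\roi_X^+)^r$ lies in the image of the ghost map $w_r\colon W_r(\hat\roi_X^+)\to(\hat\roi_X^+)^r$, which combined with injectivity of $w_r$ (from $p$-torsion-freeness of $\hat\roi_X^+$) gives the integral lift. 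This containment is a standard Witt-vector fact --- it can be checked directly by solving the ghost equations with the induction $a_n\in p^{r-1-n}A$, or by reducing to polynomial rings with a Frobenius lift via Dwork's lemma --- but since it carries the full weight of the argument it deserves a sentence.
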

\begin{proof}
First note that $p$-torsion-freeness is the same as $[\zeta_{p^r}]-1$-torsion-freeness over $W_r(\roi)$ by, e.g., \cite[Rmk.~3.16]{Morrow_BMSnotes}, which shows more precisely that $p^r\in ([\zeta_{p^r}]-1)W_r(\roi)$ and $([\zeta_{p^r}]-1)^{p^{r-1}}\in pW_r(\roi)$.

(i): Recall that $W_r\Omega^i_{\frak X/\roi}$ is $p$-torsion-free: indeed, since $\frak X$ is locally \'etale over a Laurent polynomial algebra and the relative de Rham--Witt complex behaves well under \'etale base change \cite[Lem.~10.8]{Bhatt_Morrow_Scholze2}, this reduces to $W_r\Omega^i_{\roi[T_1^{\pm1},\dots,T_d^{\pm1}]/\roi}$, which is $p$-torsion free thanks to the explicit description of the latter \cite[Thm.~10.12]{Bhatt_Morrow_Scholze2} and the fact that $W_r(\roi)$ is $p$-torsion free for all $r\ge1$.
 Since $H^i(W_r\Omega_{\frak X})$ is isomorphic to (a twist of) $W_r\Omega^i_{\frak X/\roi}$ by Theorem \ref{theorem_p-adic_Cartier}, this proves (i).

(ii): According to an elementary property of the d\'ecalage functor \cite[Lem.~6.9]{Bhatt_Morrow_Scholze2}, the canonical map \[H^i(\tilde{W_r\Omega}_{\frak X})=H^i(L\eta_{[\zeta_{p^r}]-1}R\nu_*W_r(\hat\roi_X^+))\To R^i\nu_*W_r(\hat\roi_X^+)\] has image equal to the multiples of $([\zeta_{p^r}]-1)^i$ and kernel killed by $([\zeta_{p^r}]-1)^j$. But we have just shown that $H^i(\tilde{W_r\Omega_{\frak X}})  $ is $[\zeta_{p^r}]-1$-torsion-free, so the canonical map is injective.

(iii): Another elementary property of the d\'ecalage functor \cite[Lem.~6.4]{Bhatt_Morrow_Scholze2} identifies $R^i\nu_*W_r(\hat\roi_X^+)$ modulo its $[\zeta_{p^r}]-1$-torsion with $H^i(W_r\Omega_{\frak X})$; since the latter has no $[\zeta_{p^r}]-1$-torsion, it follows that all $[\zeta_{p^r}]-1$-power torsion in the former is in fact killed by $[\zeta_{p^r}]-1$.

(iv): Trivialising the Breuil--Kisin twists in diagrams (\ref{squareF}) and (\ref{squareR}) via the basis element $\tilde\xi_r^j$ mod $\tilde\xi_r^{j+1}$, and taking cohomology $H^i(-)$ where $i\le j$, yields the following commutative diagrams:
\begin{equation}\xymatrix{
W_r\Omega_{\frak X/\roi}^i\ar[r]^{F}\ar@{^(->}[d]&W_{r-1}\Omega_{\frak X/\roi}^i\ar@{^(->}[d]\\
R^i\nu_*W_r(\hat\roi_X^+)\ar[r]_F& R^i\nu_*W_{r-1}(\hat\roi_X^+)
}\label{squareF'}\end{equation}

\begin{equation}\xymatrix{
W_r\Omega_{\frak X/\roi}^i\ar[r]^{\tilde\theta_{r-1}(\xi)^{j-i}R}\ar@{^(->}[d]&W_{r-1}\Omega_{\frak X/\roi}^i\ar@{^(->}[d]\\
R^i\nu_*W_r(\hat\roi_X^+)\ar[r]_{\tilde\theta_{r-1}(\xi)^{j}R}& R^i\nu_*W_{r-1}(\hat\roi_X^+)
}\label{squareR'}\end{equation}
The most relevant case of diagram (\ref{squareR'}) is when $i=j$ (this is also sufficient to establish it in general), in which case it exactly states that the isomorphisms of Theorem \ref{theorem_p-adic_Cartier} are compatible with the Restriction maps. Since $F-R:W_r\Omega_{\frak X/\roi}^i\to W_{r-1}\Omega_{\frak X/\roi}^i$ is surjective by Theorem \ref{theorem_surj_on_formal_schemes}, one see that the image of \[F-\tilde\theta_{r-1}(\xi)^{j-i}R=F-\left(\tfrac{[\zeta_{p^{r-1}}]-1}{[\zeta_{p^r}]-1}\right)^{j-i}R:W_r\Omega_{\frak X/\roi}^i\to W_{r-1}\Omega_{\frak X/\roi}^i\] contains all multiples of $([\zeta_{p^{r-1}}]-1)^{j-1}$. Appealing to the two diagrams and the image result in (ii), we see that the image of $F-\tilde\theta_{r-1}(\xi)^jR:R^i\nu_*W_r(\hat\roi_X^+)\to R^i\nu_*W_{r-1}(\hat\roi_X^+)$ contains all multiples of $([\zeta_{p^{r-1}}]-1)^j$, hence all multiples of $p^{rj}$ by the opening paragraph of the proof.
\end{proof}

\section{The main theorems}\label{section_main}
In this section we maintain the notation introduced at the start of Section \ref{section_prelim}; in particular, $\frak X$ is still a smooth formal scheme over the ring of integers $\roi$ of a perfectoid field of mixed characteristic containing all $p$-power roots of unity. Our first main goal is the following theorem, whose proof will occupy Sections \ref{subsection_Z}--\ref{section_end}:

\begin{theorem}\label{main_theorem_over_C}
For each $N,j\ge0$ there is a natural homotopy fibre sequence of pro sheaves on $\frak X_\sub{\'et}$ \begin{equation}\tau^{\le j}R\nu_*(\bb Z/p^N\bb Z(j))\To \projlimf_{r\sub{ wrt }R}\tau^{\le j}\tilde{W_r\Omega}_\frak X\{j\}/p^N\xTo{F-R}\projlimf_{r\sub{ wrt }R}\tau^{\le j}\tilde{W_{r-1}\Omega}_\frak X\{j\}/p^{N}.\label{eqn_better}\end{equation}
\end{theorem}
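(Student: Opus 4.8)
The plan is to prove this theorem as the culmination of the strategy outlined in the introduction: reduce the $p$-adic vanishing cycles to the Frobenius-fixed points of $R\nu_*W_r(\hat\roi_X^+)$, then transport that description through the décalage functor to $\tilde{W_r\Omega}_\frak X$, and finally deal with the truncation-versus-$p^N$ subtlety.

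\textbf{Step 1: a kernel computation on the pro-étale site.} First I would introduce the sheaves $Z_r:=\ker(W_r(\hat\roi_X^+)\xto{R-F}W_{r-1}(\hat\roi_X^+))$ on $X_\sub{pro\'et}$ and prove that $\projlimf_r Z_r/p^N Z_r\cong\bb Z/p^N\bb Z$ as pro sheaves. The point is that $R-F:W_r(\hat\roi_X^+)\to W_{r-1}(\hat\roi_X^+)$ is a relative Artin--Schreier-type map: since $\hat\roi_X^+$ is (locally, pro-étale) a perfectoid ring with lots of $p$-th roots, one can solve $R w-Fw=v$ after passing to a pro-étale cover, so the sheafified map is surjective, and the kernel $Z_r$ is a locally constant sheaf whose global sections recover $\bb Z/p^r\bb Z$-type data; passing to the pro-system over $R$ and then killing $p^N$ leaves the constant sheaf $\bb Z/p^N\bb Z$. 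This gives a fibre sequence $\bb Z/p^N\bb Z\to \projlimf_r W_r(\hat\roi_X^+)/p^N\xto{R-F}\projlimf_r W_{r-1}(\hat\roi_X^+)/p^N$ on $X_\sub{pro\'et}$; applying $R\nu_*$ and reindexing the Frobenius twist (so $R-F$ becomes $F-R$ after the standard sign/twist bookkeeping in Section~\ref{subsection_BK}) yields $R\nu_*(\bb Z/p^N\bb Z(j))$ as the homotopy fibre of $F-R$ on $\projlimf_r R\nu_*W_r(\hat\roi_X^+)\{j\}/p^N$.

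\textbf{Step 2: replace $R\nu_*W_r(\hat\roi_X^+)$ by $\tilde{W_r\Omega}_\frak X$.} The canonical map $\tilde{W_r\Omega}_\frak X\{j\}\to R\nu_*W_r(\hat\roi_X^+)\{j\}$ is an isomorphism after inverting $[\zeta_{p^r}]-1$, and by Corollary~\ref{corollary_torsion_bound} its cone is bounded $[\zeta_{p^r}]-1$-power torsion with explicit control (the torsion is killed by $[\zeta_{p^r}]-1$, hence by $p^r$, and the cohomology sheaves of $\tilde{W_r\Omega}_\frak X$ inject with image the $([\zeta_{p^r}]-1)^i$-multiples). I would then argue that on the \emph{pro}-system over $R$, modulo $p^N$, the difference between the two towers is annihilated: the transition maps $R$ multiply by $\tilde\theta_{r-1}(\xi)^j=\big(\tfrac{[\zeta_{p^{r-1}}]-1}{[\zeta_{p^r}]-1}\big)^j$ and, combined with the fact that $([\zeta_{p^r}]-1)^{p^{r-1}}\in pW_r(\roi)$, any fixed bounded-torsion obstruction dies after finitely many steps in the tower. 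Concretely this is an Artin--Schreier argument applied to the map $F-R$ restricted to the cone $\op{cofib}(\tilde{W_r\Omega}_\frak X\{j\}\to R\nu_*W_r(\hat\roi_X^+)\{j\})$: one shows $F-R$ is a pro-isomorphism on this cofibre mod $p^N$, using part (iv) of Corollary~\ref{corollary_torsion_bound} for surjectivity of $F-R$ on the cohomology and the torsion bounds for injectivity. Hence the homotopy fibre of $F-R$ is unchanged, giving the fibre sequence with $\tilde{W_r\Omega}_\frak X\{j\}/p^N$ in place of $R\nu_*W_r(\hat\roi_X^+)\{j\}/p^N$, but \emph{without} truncation.

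\textbf{Step 3: insert the truncation $\tau^{\le j}$.} Since $R\nu_*(\bb Z/p^N\bb Z(j))$ is the vanishing-cycles complex, one knows $R^i\nu_*(\bb Z/p^N\bb Z(j))=0$ for $i>j$ (this is the Bhatt--Morrow--Scholze / Scholze bound), so $\tau^{\le j}R\nu_*(\bb Z/p^N\bb Z(j))=R\nu_*(\bb Z/p^N\bb Z(j))$; thus I want to replace the untruncated $\tilde{W_r\Omega}_\frak X\{j\}/p^N$ terms by $\tau^{\le j}\tilde{W_r\Omega}_\frak X\{j\}/p^N$. The naive move is to observe that $\tau^{\le j}$ of the fibre sequence from Step~2 is a fibre sequence again because the fibre is already concentrated in degrees $\le j$; the genuine obstacle, flagged in the introduction, is that $\tau^{\le j}(\tilde{W_r\Omega}_\frak X\{j\})/p^N$ need not agree with $\tau^{\le j}(\tilde{W_r\Omega}_\frak X\{j\}/p^N)$ when $H^{j+1}(\tilde{W_r\Omega}_\frak X\{j\})$ has $p$-torsion — equivalently, a Bockstein issue, which is exactly the statement that the long exact sequence~(\ref{eqn_BK}) breaks into short exact sequences. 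So the last step is to prove that splitting. Following the introduction, I would deduce the needed surjectivity of $R^{j+1}\nu_*(\bb Z/p\bb Z(j))\to R^{j+1}\nu_*(\bb Z/p^N\bb Z(j))$ — equivalently $p$-divisibility statements on the vanishing cycles — either from Bloch--Kato's symbol-generation theorem (\cite[Corol.~6.1.1]{Bloch1986}) or by a direct $q$-de Rham argument; granting that, $\tau^{\le j}$ commutes with $/p^N$ on all the complexes in sight, and the truncated fibre sequence~(\ref{eqn_better}) follows.

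\textbf{Main obstacle.} The serious point is Step~3: controlling the interaction of $\tau^{\le j}$, reduction mod $p^N$, and the pro-structure. Steps~1 and~2 are essentially formal Artin--Schreier and décalage bookkeeping once the torsion bounds of Corollary~\ref{corollary_torsion_bound} are in hand; but showing that modding out by $p^N$ commutes with $\tau^{\le j}$ requires the splitting of~(\ref{eqn_BK}), which is a genuinely arithmetic input about $p$-adic vanishing cycles (currently available only through Bloch--Kato symbols or the $q$-de Rham methods of~\cite{Bhatt_Morrow_Scholze3}). I expect the write-up to isolate this as a separate lemma and then assemble the three steps.
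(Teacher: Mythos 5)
Your Step 1 matches the paper's Section~3.1 (Lemma~\ref{lemma_p_adic_cyces_via_witt}) exactly. The difficulty is in Step~2, where there is a genuine gap: you propose to compare the towers $\tilde{W_r\Omega}_\frak X\{j\}$ and $R\nu_*W_r(\hat\roi_X^+)\{j\}$ \emph{without truncation} by running $F-R$ on the cofibre, but the restriction map $R$ is not defined on the untruncated complexes $\tilde{W_r\Omega}_\frak X\{j\}$. As the paper emphasizes around diagram~(\ref{squareR}), $R:W_r(\roi)\to W_{r-1}(\roi)$ sends $[\zeta_{p^r}]-1\mapsto[\zeta_{p^r}]-1$ rather than to $[\zeta_{p^{r-1}}]-1$, so $R\nu_*$ of the restriction map does \emph{not} descend to a map $L\eta_{[\zeta_{p^r}]-1}R\nu_*W_r\to L\eta_{[\zeta_{p^{r-1}}]-1}R\nu_*W_{r-1}$; it is only after applying $\tau^{\le j}$ (via the composition with the canonical map ``$\tilde\theta_{r-1}(\xi)^j$'' of \cite[Lem.~6.9]{Bhatt_Morrow_Scholze2}) that an $R$, and hence an $F-R$, exists on the $\tilde{W_r\Omega}$ side. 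Consequently the cofibre you want to analyse does not carry an $F-R$ operator, and the ``pro-isomorphism on the cone'' argument never gets off the ground. The paper avoids this by truncating first: Corollary~\ref{corollary_A} compares the fibre of $F-R$ on $\tau^{\le j}\tilde{W_r\Omega}_\frak X\{j\}$ directly with $\tau^{\le j}$ of the fibre on $\tau^{\le j}R\nu_*W_r(\hat\roi_X^+)\{j\}$, using the cohomological injectivity of Corollary~\ref{corollary_torsion_bound}(ii) and the pro-isomorphism statements of Corollary~\ref{corollary3} on the cokernels. To rescue Step~2 you would need to reorganise it around the truncated complexes from the start, which essentially collapses it back into the paper's argument.

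On Step~3: your diagnosis that the obstruction is the splitting of the Bockstein/long exact sequence~(\ref{eqn_BK}) is correct and is exactly what Proposition~\ref{appendix} encodes as several equivalent conditions. However, your proposal to discharge it via Bloch--Kato's symbol-generation theorem reproduces only Remark~\ref{remark_BK}, which the paper explicitly treats as an aside and which only applies when $\frak X$ descends to the completion of a smooth scheme over the ring of integers of a discretely valued subfield of $C$. The paper's actual proof of Theorem~\ref{main_theorem_over_C} in full generality is self-contained: it verifies condition~(iii) of Proposition~\ref{appendix} ($p$-torsion-freeness of $\projlimf_r R^{j+1}\nu_*Z_r$) via an injectivity argument in the inverse limit of de Rham--Witt quotients (Lemma~\ref{lemma_intersection_of_rou}) and the resulting vanishing in Proposition~\ref{proposition_p_tf}. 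So even granting the fix to Step~2, your route would prove a weaker statement than the theorem as stated. Finally, a small remark: you do not actually need the vanishing $R^i\nu_*(\bb Z/p^N(j))=0$ for $i>j$ to state or prove the theorem, since the left-hand term is already truncated at $\tau^{\le j}$.
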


\begin{remark}\label{remark1}
\begin{enumerate}
\item Since the cohomology of $\tilde{W_r\Omega}_\frak X\{j\}$ is $p$-torsion-free by Corollary \ref{corollary_torsion_bound}(ii), the canonical map $(\tau^{\le j}\tilde{W_r\Omega}_\frak X\{j\})/p^N\to \tau^{\le j}(\tilde{W_r\Omega}_\frak X\{j\}/p^N)$ is a quasi-isomorphism for each $r,N\ge1$. Therefore we are justified in omitting parentheses in the above statement.

\item Suppose that $\frak X$ is the $p$-adic completion of a scheme $\cal X$. Then $R\nu_*(\bb Z/p^N\bb Z)$ (and similarly any Tate twist) may be identified with the usual $p$-adic vanishing cycles $\res i^*R\res j_*(\bb Z/p^N\bb Z)$, where \[\cal X_0:=(\cal X\otimes_{\bb Z}\bb Z/p\bb Z)_\sub{red}\xto{\res i}\cal X\stackrel{\res j}{\leftarrow}X_\eta:=\cal X\otimes_{\bb Z}\bb Z[\tfrac1p]\] denote the usual inclusions of the special and generic fibres.

To explain the proof it is convenient to write $\Lambda=\bb Z/p^N\bb Z$ with an appropriate subscript to denote the site on which this constant sheaf lies; essentially by definition we have in particular $\Lambda_{X_{\sub{pro\'et}}}=\nu^*\Lambda_{X_\sub{\'et}}$ and $\Lambda_{X_\sub{\'et}}=a^*\Lambda_{\cal X_{\eta\,\sub{\'et}}}$, where $a:X_\sub{\'et}\to \cal X_{\eta\,\sub{\'et}}$ is the morphism of sites constructed in \cite[3.5.12]{Huber1996}. Factoring $\nu:X_\sub{pro\'et}\to\frak X_\sub{\'et}$ as $X_\sub{pro\'et}\xto{w}X_\sub{\'et}\xto{b}\frak X_\sub{\'et}$ and using the quasi-isomorphism $\Lambda_{X_\sub{\'et}}\isoto R\nu_*\Lambda_{X_\sub{pro\'et}}$ of \cite[Corol.~3.17]{Scholze2013}, the problem is reduced to giving a natural quasi-isomorphism \[\res i^*R\res j_*\Lambda_{\cal X_{C\,\sub{\'et}}}\quis Rb_*a^*\Lambda_{\cal X_{\eta\,\sub{\'et}}}.\] But this is exactly Huber's comparison \cite[3.5.13]{Huber1996}.
\end{enumerate}
\end{remark}

Passing to cohomology sheaves in Theorem \ref{main_theorem_over_C}, which are described by Theorem \ref{theorem_p-adic_Cartier}, immediately yields the following (in which the surjectivity of the final map is a case of Theorem \ref{theorem_surj_on_formal_schemes}):

\begin{corollary}\label{corollary_les}
There is an associated long exact sequence of pro sheaves on the \'etale site of $\frak X$
\[\cdots\To R^{i}\nu_*(\bb Z/p^N\bb Z(j))\To \projlimf_{r\sub{ wrt }R}W_r\Omega^{i}_{\frak X/\roi}\{j-i\}/p^N\xto{F-R}\projlimf_{r\sub{ wrt }R}W_{r-1}\Omega^{i}_{\frak X/\roi}\{j-i\}/p^N\To\cdots\]
finishing in
\[\cdots\To R^j\nu_*(\bb Z/p^N\bb Z(j))\xTo{\iota}\projlimf_rW_r\Omega^j_{\frak X/\roi}/p^N\xto{F-R}\projlimf_rW_{r-1}\Omega^j_{\frak X/\roi}/p^N\To 0.\]
{\rm For discussion of the map $\iota$, see Section \ref{subsection_iota}.}
\end{corollary}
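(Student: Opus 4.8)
The plan is to obtain both long exact sequences essentially for free, by applying the cohomology functor to the homotopy fibre sequence of Theorem~\ref{main_theorem_over_C} and then reading off the terms; the only real input is the $p$-adic Cartier isomorphism of Theorem~\ref{theorem_p-adic_Cartier} together with a little Breuil--Kisin twist bookkeeping, so I do not anticipate any genuine obstacle.

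First I would recall that a homotopy fibre sequence $A\to B\to C$ induces a long exact sequence $\cdots\to H^i(A)\to H^i(B)\to H^i(C)\to H^{i+1}(A)\to\cdots$ of cohomology sheaves, and that this persists verbatim for pro complexes of sheaves: $H^i(-)$ is additive and, being a functor on the underlying category, extends to pro objects commuting with the formal limit $\projlimf$. So Theorem~\ref{main_theorem_over_C} produces a long exact sequence of pro sheaves on $\frak X_\sub{\'et}$, and it remains only to identify the three kinds of term. This passage from a fibre sequence of pro complexes to a long exact sequence of pro cohomology sheaves is the one point that deserves a careful sentence in the write-up, but it is a standard feature of the pro-object formalism.

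Next I would name the terms. The first term $\tau^{\le j}R\nu_*(\bb Z/p^N\bb Z(j))$ is by construction concentrated in cohomological degrees $\le j$, so its $i$-th cohomology sheaf is $R^i\nu_*(\bb Z/p^N\bb Z(j))$ for $i\le j$ and vanishes for $i>j$; this is precisely why the sequence terminates after the $j$-th term and ends with a surjection, giving the displayed final segment. For the remaining two terms I would invoke Remark~\ref{remark1}(i) to replace $\tau^{\le j}\tilde{W_r\Omega}_\frak X\{j\}/p^N$ by $(\tau^{\le j}\tilde{W_r\Omega}_\frak X\{j\})/p^N$, so that for $i\le j$ its $i$-th cohomology sheaf is $H^i(\tilde{W_r\Omega}_\frak X\{j\})/p^N$ — the potential torsion contribution $H^{i+1}(\tilde{W_r\Omega}_\frak X\{j\})[p^N]$ vanishing by the $p$-torsion-freeness of Corollary~\ref{corollary_torsion_bound}(i) — and is zero for $i>j$. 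Untwisting via $H^i(\tilde{W_r\Omega}_\frak X\{j\})=H^i(\tilde{W_r\Omega}_\frak X\{i\})\otimes_{W_r(\roi)}W_r(\roi)\{j-i\}$, the twist being an invertible hence flat $W_r(\roi)$-module, and applying the isomorphism $H^i(\tilde{W_r\Omega}_\frak X\{i\})\cong W_r\Omega^i_{\frak X/\roi}$ of Theorem~\ref{theorem_p-adic_Cartier}, I get $W_r\Omega^i_{\frak X/\roi}\{j-i\}/p^N$, and the same with $r-1$ in place of $r$. By the last clause of Theorem~\ref{theorem_p-adic_Cartier} the map induced on cohomology by $F-R$ is the Langer--Zink de Rham--Witt operator $F-R$, which completes the identification of both displayed sequences.

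Finally, for the surjectivity of the last arrow there are two short routes: either note that $H^{j+1}$ of the homotopy fibre $\tau^{\le j}R\nu_*(\bb Z/p^N\bb Z(j))$ vanishes, so the long exact sequence already forces $F-R\colon\projlimf_rW_r\Omega^j_{\frak X/\roi}/p^N\to\projlimf_rW_{r-1}\Omega^j_{\frak X/\roi}/p^N$ to be an epimorphism of pro sheaves; or deduce it directly from the levelwise surjectivity of $F-R$ on Langer--Zink forms established in Theorem~\ref{theorem_surj_on_formal_schemes}. Beyond these routine identifications I expect no real difficulty.
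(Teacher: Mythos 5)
Your proposal is correct and takes the same route the paper does: pass to cohomology of the fibre sequence of Theorem~\ref{main_theorem_over_C}, identify the middle and right cohomology sheaves via the $p$-adic Cartier isomorphism (Theorem~\ref{theorem_p-adic_Cartier}) after untwisting, observe the first term is truncated at degree $j$ to obtain the final surjection, and note that the surjectivity also follows from Theorem~\ref{theorem_surj_on_formal_schemes}. You have merely written out in detail the bookkeeping (Remark~\ref{remark1}(i), $p$-torsion-freeness from Corollary~\ref{corollary_torsion_bound}(i), flatness of the Breuil--Kisin twist) that the paper compresses into "immediately yields."
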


\subsection{The pro-\'etale sheaves $Z_r$}\label{subsection_Z}
We denote by $\bb Z/p^N\bb Z$ the pro-\'etale sheaf on $X_\sub{pro\'et}$ defined as $\nu^*(\bb Z/p^N\bb Z)$, where the latter $\bb Z/p^N\bb Z$ is the constant sheaf on $X_\sub{\'et}$; this minor abuse of notation should not cause confusion. Then, as in \cite[Def.~8.1]{Scholze2013}, we take the inverse limit on the pro-\'etale site to form the sheaf $\hat{\bb Z}_p:=\projlim_N\bb Z/p^N\bb Z$ on $X_\sub{pro\'et}$ (in fact, the sheaf $\hat{\bb Z}_p$ will not appear again after the statement of Lemma \ref{lemma_AS_maps}(ii)).

The following structure of the pro-\'etale site was implicit in \cite{Scholze2013}:

\begin{lemma}
A basis for the pro-\'etale site $X_\sub{pro\'et}$ is provided by the affinoid perfectoids $\cal U\in X_\sub{pro\'et}$ with the following additional property: any faithfully flat, finite \'etale $\hat \roi_X(\cal U)$-algebra admits a section.
\end{lemma}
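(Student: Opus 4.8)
The plan is to show that affinoid perfectoids of the stated type are cofinal in $X_\sub{pro\'et}$, building on Scholze's structure theory. First I would recall from \cite{Scholze2013} that the affinoid perfectoid objects already form a basis of $X_\sub{pro\'et}$; so it suffices, given an arbitrary affinoid perfectoid $\cal U = \Spa(R,R^+)$ in $X_\sub{pro\'et}$, to produce a pro-\'etale cover $\cal V \to \cal U$ with $\cal V$ affinoid perfectoid and having the additional ``no nontrivial finite \'etale covers'' property. The natural candidate is to pass to a geometric point: form the direct limit $\cal V$ of all faithfully flat finite \'etale $\cal U$-algebras, i.e.\ the ``universal pro-finite-\'etale cover'' of $\cal U$, which is again a pro-\'etale object over $\cal U$ lying in $X_\sub{pro\'et}$.

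Next I would verify that $\cal V$ is affinoid perfectoid. Since $\cal U$ is affinoid perfectoid and finite \'etale maps preserve the perfectoid condition, each finite stage $\cal U' \to \cal U$ is affinoid perfectoid; and affinoid perfectoid spaces are stable under such filtered inverse limits along the pro-\'etale site (the completed colimit of the structure rings is again perfectoid, cf.\ the discussion of affinoid perfectoid objects in \cite[\S4]{Scholze2013}). Then I would check the defining property: if $A$ is a faithfully flat finite \'etale $\hat\roi_X(\cal V)$-algebra, then since finite \'etale algebras are finitely presented, $A$ is already defined over $\hat\roi_X(\cal U'')$ for some finite-\'etale stage $\cal U''$ between $\cal U$ and $\cal V$; but by construction $\cal V$ dominates the universal pro-finite-\'etale cover of $\cal U''$ as well, so the base change of $A$ to $\hat\roi_X(\cal V)$ acquires a section. (One should phrase this using the tilting equivalence and almost purity if one wants to argue at the level of $\hat\roi_X^+$ rather than $\hat\roi_X$, but working with the perfectoid Tate ring $\hat\roi_X(\cal V)$ directly, where finite \'etale algebras are honestly \'etale, is cleanest.)

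The main obstacle I anticipate is the bookkeeping needed to ensure that the universal pro-finite-\'etale cover is still an object of the pro-\'etale site $X_\sub{pro\'et}$ \emph{as defined in} \cite{Scholze2013}, i.e.\ that it can be written as a cofiltered inverse limit of objects \'etale over $X$ with the requisite qcqs/finiteness conditions, rather than merely being some abstract perfectoid space over $X$; this is essentially the content of showing the limit is ``pro-\'etale'' in Scholze's technical sense. I would handle this by exhibiting $\cal V$ explicitly as $\varprojlim_i \cal U_i$ where $\{\cal U_i\}$ ranges over the (small, cofiltered) system of finite \'etale covers of $\cal U$, each $\cal U_i$ being pro-\'etale over $X$ since $\cal U$ is, and invoking that a cofiltered limit of pro-\'etale objects is pro-\'etale. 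A secondary point to get right is faithful flatness: I only need sections for \emph{faithfully flat} finite \'etale algebras, which is exactly the condition that makes the universal cover absorb them, so no issue arises from the empty algebra or idempotent decompositions. With these verifications in place, $\cal V \to \cal U$ is the desired refinement and the family of such $\cal U$ forms a basis.
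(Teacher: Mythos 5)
Your strategy is the same as the paper's: starting from an arbitrary affinoid perfectoid, pass to a ``universal pro-finite-\'etale cover'' that absorbs all faithfully flat finite \'etale algebras. The paper executes this at the bottom of a chosen presentation $\cal V=\projlimf_i V_i$ (so the universal cover $\varinjlim_j S_j$ is built over the honest affinoid $R_0=\roi(V_0)$, and the desired refinement is the fibre product $(\projlimf_j\Spa(S_j,S_j^+))\times_{V_0}\cal V$), while you build the universal cover directly over the perfectoid object $\cal U$; both routes use \cite[Lem.~3.10]{Scholze2013} to see the result is still in the site, so that part of your ``main obstacle'' is harmless. Your ``direct limit of all faithfully flat finite \'etale $\cal U$-algebras'' should be replaced by a carefully chosen \emph{filtered} system of connected finite \'etale covers whose colimit absorbs every finite \'etale algebra, as the paper does; ``all'' of them do not form a cofiltered system on the nose.

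The genuine gap is in your descent step. You claim a faithfully flat finite \'etale $\hat\roi_X(\cal V)$-algebra $A$ descends to a finite stage $\hat\roi_X(\cal U'')$ ``since finite \'etale algebras are finitely presented.'' But $\hat\roi_X(\cal V)$ is the $p$-adic \emph{completion} of the filtered colimit $\roi_X(\cal V)=\indlim\roi_X(\cal U'')$, not the colimit itself, so finite presentation alone does not give the descent: you must first invoke that the $p$-adic completion does not change the category of finite \'etale algebras (the pair is henselian along $(p)$), which is precisely the input the paper takes from Gabber--Ramero \cite{GabberRamero2003}, and only then apply the finitely-presented $2$-colimit argument. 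Your parenthetical about tilting and almost purity misidentifies the needed input --- those are used in the subsequent Artin--Schreier lemma, not here. With this step inserted the rest of your argument, including the observation that faithful flatness is exactly what lets the universal cover map to $\Spec A$, goes through as the paper's does.
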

\begin{proof}
A basis for the pro \'etale site is given by the connected affinoid perfectoids $\cal V$ \cite[Corol.~4.7]{Scholze2013} so it suffices, for each such $\cal V$, to find an affinoid perfectoid $\cal U$ with the desired property and a pro-\'etale cover $\cal U\to\cal V$. Without loss of generality we may choose a pro-\'etale presentation $\cal V=\projlimf_iV_i$ where
\begin{itemize}\itemsep0pt
\item each $V_i$ is affinoid, $V_i=\Spa(R_i,R_i^+)$ say;
\item all transition maps $V_{i'}\to V_i$ are finite \'etale;
\item the indexing set of the inverse system has a minimal element $0$;
\item $V_0$ is connected.
\end{itemize}
We now argue as in the proof of \cite[Thm.~4.9]{Scholze2013}. Let $\indlim_j S_j$ be a filtered colimit of connected $R_0$-algebras, along finite \'etale transition maps, such that any finite \'etale algebra over $\indlim_j S_j$ admits a section; let $S_j^+\subseteq S_j$ denote the integral closure of $R_0^+$ inside $S_j$. Then $\cal V':=\projlimf_j\Spa(S_j,S_j)^+\to\cal V$ is a pro-\'etale cover, and so $\cal U:=\cal V'\times_{V_0}\cal V\to\cal V$ is also a pro-\'etale cover by \cite[Lem.~3.10(i)]{Scholze2013}; we will show that $\cal U$ has the desired property.

Firstly, $\cal U$ is affinoid perfectoid by \cite[Lem.~4.5--4.6]{Scholze2013}. Next recall that base change induces an equivalence between finite \'etale algebras over $\roi_X(\cal U)=\roi_X^+(\cal U)=\indlim_{j,i} S_j\otimes_{R_0}R_i$ and over $\hat\roi_X(\cal U)=\hat\roi_X^+(\cal U)[\tfrac1p]$ \cite{GabberRamero2003}; then the usual $2\op{-lim}$ statement about finitely presented algebras (see e.g., \cite[Lem.~7.5]{Scholze2012}) tells us that any finite \'etale algebra over $\indlim_{j,i} S_j\otimes_{R_0}R_i$ comes via base change from a finite \'etale algebra over $(\indlim_j S_j)\otimes_{R_0}R_i$ for some fixed $i$ (we could even descend to a fixed $j$, but that would not be useful). These two descents respect faithful flatness (which, for finite \'etale algebras, is equivalent to injectivity) and therefore show that any faithfully flat finite \'etale $\hat\roi_X(\cal U)$ algebra comes via base change from a faithfully flat $(\indlim_j S_j)\otimes_{R_0}R_i$-algebra for some $i$. But $(\indlim_j S_j)\otimes_{R_0}R_i$ is finite \'etale over $\indlim_j S_j$, hence is isomorphic to a product of finitely many copies of $\indlim_j S_j$; therefore any faithfully flat finite \'etale algebra over it admits a section, which we may then base change back to the original $\hat\roi_X(\cal U)$-algebra.
\end{proof}

This structure of the pro-\'etale site easily yields the following results about various Artin--Schreier maps:

\begin{lemma}\label{lemma_AS_maps}
\begin{enumerate}
\item For each $N\ge1$, the sequence $0\to\bb Z/p^N\bb Z\to W_N(\hat\roi_{X^\flat}^+)\xto{\phi-1}W_N(\hat\roi_{X^\flat}^+)\to 0$ of sheaves on $X_\sub{pro\'et}$ is exact.
\item Taking $\projlim_N$, the resulting sequence $0\to \hat{\bb Z}_p\to \bb A_{\sub{inf},X}\xto{\phi-1} \bb A_{\sub{inf},X}\to 0$ on $X_\sub{pro\'et}$ is exact.
\item For $j\ge1$ and $\al\in\xi^j\bb A_\sub{inf}$, the operator $1-\al\phi^{-1}$ is an automorphism of the sheaf $\bb A_{\sub{inf},X}/\mu^j$.
\item The map $F-R:W_r(\hat\roi_X^+)\to W_{r-1}(\hat\roi_X^+)$ of sheaves on $X_\sub{pro\'et}$ is surjective.
\end{enumerate}
\end{lemma}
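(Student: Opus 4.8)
The plan is that all four assertions are statements about sheaves on $X_\sub{pro\'et}$, so by the basis furnished by the preceding lemma it suffices to check each one on sections over an affinoid perfectoid $\cal U$ with the property that every faithfully flat finite \'etale $\hat\roi_X(\cal U)$-algebra splits. Write $A:=\hat\roi_X^+(\cal U)$, an integral perfectoid ring of mixed characteristic, and $A^\flat:=\hat\roi_{X^\flat}^+(\cal U)$ its tilt, a perfect $\bb F_p$-algebra with $W(A^\flat)=\bb A_{\sub{inf},X}(\cal U)$. Since tilting is an equivalence on finite \'etale sites, $A^\flat[\tfrac1{\varpi^\flat}]$ (for a pseudo-uniformiser $\varpi^\flat$) also has the splitting property; and since $A$ (resp.\ $A^\flat$) is integrally closed in $\hat\roi_X(\cal U)$ (resp.\ in $A^\flat[\tfrac1{\varpi^\flat}]$), any monic equation over $A$ or $A^\flat$ that becomes solvable after such a finite \'etale extension is already solvable there. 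Every cover produced below is finite \'etale, so no further refinement of $\cal U$ is needed.

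For (i) this is the Artin--Schreier--Witt sequence of the $\bb F_p$-algebra $A^\flat$. Left exactness is formal: induction on $N$ using $0\to A^\flat\xrightarrow{V^{N-1}}W_N(A^\flat)\xrightarrow{R}W_{N-1}(A^\flat)\to 0$ and the case $N=1$, together with connectedness of $\cal U$. For surjectivity of $\phi-1$ on sections: given $a\in W_N(A^\flat)$, the Artin--Schreier--Witt cover of $A^\flat$ defined by $\phi(\vec x)-\vec x=a$ is a tower of Artin--Schreier extensions, hence finite \'etale, hence splits over $\cal U$, yielding a preimage $\vec x$. Part (ii) is then the image of (i) under $\op{Rlim}_N$: the transition maps $W_N(A^\flat)\to W_{N-1}(A^\flat)$ and $\bb Z/p^N\bb Z\to\bb Z/p^{N-1}\bb Z$ are termwise surjective, so the relevant $\limone$ vanish; $\projlim_N W_N(\hat\roi_{X^\flat}^+)=\bb A_{\sub{inf},X}$ and $\projlim_N\bb Z/p^N\bb Z=\hat{\bb Z}_p$ by definition; and $\phi-1$ commutes with the $R$'s.

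For (iii): note first that $\mu$ is a non-zero-divisor on $\bb A_{\sub{inf},X}$, and that $\al\phi^{-1}$ is a well-defined endomorphism of $\bb A_{\sub{inf},X}/\mu^j$ \emph{because} $\al\in\xi^j\bb A_\sub{inf}$ and $\mu=\xi\,\phi^{-1}(\mu)$. I would construct the inverse of $1-\al\phi^{-1}$ as the Neumann series $\sum_{n\ge0}(\al\phi^{-1})^n$. Iterating $\mu=\xi_n\,\phi^{-n}(\mu)$, where $\xi_n=\xi\,\phi^{-1}(\xi)\cdots\phi^{1-n}(\xi)$, and writing $\al=\xi^j\beta$, one gets $(\al\phi^{-1})^n(\bb A_{\sub{inf},X})\subseteq\xi_n^j\bb A_{\sub{inf},X}$ while $\mu^j=\xi_n^j\phi^{-n}(\mu)^j\in\xi_n^j\bb A_{\sub{inf},X}$; so modulo $\mu^j$ the iterates land in the decreasing chain of submodules $\xi_n^j\bb A_{\sub{inf},X}/\mu^j\bb A_{\sub{inf},X}$. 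The main obstacle is to check that this chain is separated and that $\bb A_{\sub{inf},X}/\mu^j$ is complete for it, so that the series converges to a two-sided inverse; this I would reduce to a statement about the perfectoid base $A^\flat$, exploiting that $\phi^{-1}$ contracts the $\xi$-adic structure by a definite amount at each step whereas $\al$ only supplies the fixed power $\xi^j$, so that $\xi_n^j$ sinks into the $\mu^j$-divisible part as $n\to\infty$. (This is exactly where infinite ramification of $C$ enters positively.)

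For (iv): since $\hat\roi_X^+$ is $p$-adically complete and separated, so are $W_r(\hat\roi_X^+)$ and $W_{r-1}(\hat\roi_X^+)$, and a map of such modules is surjective once it is surjective modulo $p$ (successive approximation); hence it suffices to show $F-R$ is surjective mod $p$. The universal identity $F(a_0,\dots,a_{r-1})=[a_0^p]_{W_{r-1}}+p\cdot(a_1,\dots,a_{r-1})_{W_{r-1}}$ shows that $F\colon W_r(A)/p\to W_{r-1}(A)/p$ sends $(a_0,\dots,a_{r-1})$ to the Teichm\"uller $[a_0^p]$, depending only on $a_0$; combined with $R(a_0,\dots,a_{r-1})=(a_0,\dots,a_{r-2})$, this makes $F-R\bmod p$ ``lower triangular'' in the Witt components. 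So to hit a target $(b_0,\dots,b_{r-2})\bmod p$ one first solves $a_0^p-a_0=b_0$ — possible because the cover $A[t]/(t^p-t-b_0)$ is finite \'etale and splits over $\cal U$ — and then reads off $a_1,\dots,a_{r-2}$ successively (each enters its component affinely with invertible leading coefficient), taking $a_{r-1}=0$. The point, and the reason the base must be far from perfect, is precisely that $F$ genuinely lifts Frobenius so the leading equation is Artin--Schreier; over a perfect $\bb F_p$-algebra $F=R$ and the assertion is false.
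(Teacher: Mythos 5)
Your treatment of parts (i) and (ii) matches the paper's in substance: both come down to the splitting property of the basis, the Almost Purity Theorem, and surjectivity of the resulting Artin--Schreier(--Witt) maps on $W_N(A^{\flat+})$ and $W(A^{\flat+})$.

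Part (iv) is where you take a genuinely different route. The paper deduces (iv) in one line from (ii), by applying the surjections $\theta_r,\theta_{r-1}$ to the Artin--Schreier sequence for $\bb A_{\sub{inf},X}$ via the commutative square of \cite[Lem.~3.4]{Bhatt_Morrow_Scholze2}. You instead reduce modulo $p$ by successive approximation (legitimate: $W_r(\hat\roi_X^+(\cal U))$ is $p$-adically complete and separated) and then analyse $F-R$ in Witt coordinates directly, using $F(a_0,\dots,a_{r-1})=[a_0^p]+p(a_1,\dots,a_{r-1})$ to make the problem triangular with an Artin--Schreier equation in front. The cleanest way to finish is to solve $a_0^p-a_0=b_0$ \emph{exactly} (not merely modulo $p$) over $\cal U$, so that $[a_0^p]-b$ has $0$th component equal to $a_0$, and then simply declare $(a_0,\dots,a_{r-2}):=[a_0^p]-b$ and $a_{r-1}:=0$; this gives $(F-R)(a)\equiv b$ mod $p$ without having to untangle the nonlinear Witt subtraction component by component. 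This is more elementary and self-contained than the paper's argument, but also considerably longer.

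Part (iii) contains a genuine gap, and it is exactly the one you flag. The Neumann series $\sum_{n\ge0}(\al\phi^{-1})^n$ requires $\bb A_{\sub{inf},X}/\mu^j$ to be \emph{complete} for the filtration by the images of $\xi_n^j\bb A_{\sub{inf},X}$, where $\xi_n=\xi\phi^{-1}(\xi)\cdots\phi^{1-n}(\xi)$. That filtration is separated, but it is not complete, and the hoped-for ``statement about $A^\flat$'' is actually false. Reducing modulo $p$: we would need $\roi^\flat/(\ep-1)^j\to\projlim_n\roi^\flat/\bigl(\tfrac{\ep-1}{\ep^{1/p^n}-1}\bigr)^j$ to be surjective, but the valuations of the ideals $\bar\xi_n^j$ increase towards $jv(\ep-1)$ without reaching it, so one can write down a compatible system (a ``sum'' of elements of valuation tending to $jv(\ep-1)$) that cannot converge in $\roi^\flat$. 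This is precisely the non-surjectivity phenomenon the paper itself records at (\ref{eqn_inj_not_surj}) for $\roi_\frak X/p(\zeta_p-1)\to\projlim_r\roi_\frak X/p\tfrac{\zeta_p-1}{\zeta_{p^r}-1}$, and it is why the paper repeatedly keeps track of $\projlim^1$ terms. The paper's actual proof of (iii) avoids any convergence altogether: it fits $1-\al\phi^{-1}$ into a commutative ladder whose upper square is shown to be bicartesian by reduction mod $p$, where the horizontal arrows are surjective by Artin--Schreier and the vertical map induced on kernels is surjective because $A^{\flat+}$ is integrally closed inside $A^{\flat+}[\tfrac1\mu]$. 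You should replace the Neumann-series argument with something of this sort; the infinite ramification of $C$ that you correctly identify as the source of the contraction is the same infinite ramification that destroys completeness.
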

\begin{proof}
(i)\&(ii): Given an affinoid perfectoid $\cal U$ with the property of the previous lemma, writing $A=\roi_X(\cal U)$ and $A^+=\roi_X(\cal U)$, then the (easy direction of the) Almost Purity Theorem implies that any faithfully flat finite \'etale algebra over the perfectoid Tate ring $A^\flat$ admits a section. Since $A^{\flat+}$ is integrally closed in $A^\flat$, the Artin--Schreier maps on $W_N(A^{\flat+})=\Gamma(\cal U,W_N(\hat\roi^+_{X^\flat})$ and $W(A^{\flat+})=\Gamma(\cal U,\bb A_{\sub{inf},X})$ are therefore surjective. This gives (i) and (ii).

(iii): It is enough to prove that $1-\al\phi^{-1}$ is an automorphism of $W(A^{\flat+})/\mu$ for all $A^{\flat+}$ as in the first paragraph (since this proves that $1-\al\phi^{-1}$ is an automorphism on the sections of the presheaf quotient $\bb A_{\sub{inf},X}/\mu$ on a basis for the site, whence it is an automorphism after sheafifying). Write $\al=\xi^j\al'$ for some $\al'\in\bb A_\sub{inf}$, and consider the following diagram of short exact sequences on $X_\sub{pro\'et}$:
\[\xymatrix@C=2cm{
0\ar[d] & 0\ar[d]\\
W(A^{\flat+})\ar[r]^{\phi-\phi(\al')}\ar[d]_{\mu^j} & W(A^{\flat+})\ar[d]^{\phi(\mu)^j} \\
W(A^{\flat+})\ar[r]^{\phi-\phi(\al')\tilde\xi^j}\ar[d] & W(A^{\flat+})\ar[d]^{\phi^{-1}}\\
W(A^{\flat+})/\mu^j\ar[r]^{1-\al\phi^{-1}}\ar[d] & W(A^{\flat+})/\mu^j\ar[d]\\
0&0
}\]
We must prove that the top square is bicartesian; since all the terms in the diagram are all $p$-adically complete it is enough to prove this modulo $p$, i.e. that the square 
\[\xymatrix@C=2cm{
A^{\flat+}\ar[r]^{\phi-\phi(\al')}\ar[d]_{\mu^j} & A^{\flat+}\ar[d]^{\phi(\mu)^j} \\
A^{\flat+}\ar[r]^{\phi-\phi(\al')\tilde\xi^j} & A^{\flat+}
}\]
is bicartesian. But the horizontal arrows are surjective just as in the first paragraph (though note the degenerate case when $\al'$ vanishes mod $p$, in which case the horizontal arrows are no longer Artin--Schreier maps but rather the isomorphism $\phi$). Moreover, the integral closedness of $A^{\flat+}$ inside $A^\flat=A^{\flat+}[\tfrac1\mu]$ immediately implies the induced vertical arrow on the horizontal kernels is surjective. Since the vertical arrows are also injective the diagram is bicartesian, as desired.

(iv): This follows from the surjectivity in part (ii) and the following commutative diagram with surjective vertical arrows
\[\xymatrix{
\bb A_{\sub{inf},X}\ar[r]^{\phi-1}\ar@{->>}[d]_{\theta_r}&\bb A_{\sub{inf},X}\ar@{->>}[d]^{\theta_{r-1}}\\
W_r(\hat\roi_X^+)\ar[r]_{F-R}&W_{r-1}(\hat\roi_X^+)
}\]
(see \cite[Lem.~3.4]{Bhatt_Morrow_Scholze2}.)
\end{proof}

Let $Z_r$ denote the kernel of $F-R:W_r(\hat\roi_X^+)\onto W_{r-1}(\hat\roi_X^+)$, and note that the restriction $R: W_r(\hat\roi_X^+)\to W_{r-1}(\hat\roi_X^+)$ induces a map $Z_r\to Z_{r-1}$. For each $N\ge1$ there is a resulting commutative diagram of short exact sequences
\begin{equation}\xymatrix{
0\ar[r] & \bb Z/p^N\bb Z\ar[d]_{\vartheta_r^N}\ar[r] & W_N(\roi_X^{+\flat})\ar[r]^{\phi-1}\ar@{->>}[d]_{\theta_r}&W_N(\roi_X^{+\flat})\ar[r]\ar@{->>}[d]_{\theta_{r-1}}&0\\
0\ar[r] & Z_r/p^NZ_r\ar[r] & W_r(\hat\roi_X^+)/p^N\ar[r]_{F-R}&W_{r-1}(\hat\roi_X^+)/p^N\ar[r]&0
}\label{eqn_Z_bbZ}\end{equation}
where we have taken the final commutative diagram of the previous proof modulo $p^N$. By the first commutative diagram of \cite[Lem.~3.4]{Bhatt_Morrow_Scholze2}, the restriction maps $R$ on the bottom line are compatible with the identity maps on the top line, thereby giving rise to a map of pro sheaves on $X_\sub{pro\'et}$ \[\vartheta_\infty^N:\bb Z/p^N\bb Z\To\projlimf_{r} Z_r/p^NZ_r\] The following result states that this is an isomorphism, thereby allowing us to study $p$-adic \'etale cohomology via the sheaves $W_r(\hat\roi_X^+)$:

\begin{lemma}\label{lemma_p_adic_cyces_via_witt}
For any fixed $N\ge1$, the map of pro sheaves $\vartheta_\infty^N$ is an isomorphism. More precisely,
\begin{enumerate}
\item $\vartheta_r^N$ is injective; 
\item the image of $\vartheta_r^N$ contains (hence equals) the image of $R^N:Z_{r+N}/p^NZ_{r+N}\to Z_r/p^NZ_r$. 
\end{enumerate}
Hence, for any $r,N\ge1$, there exists a unique map $Z_{r+N}/p^NZ_{r+N}\to \bb Z/p^N\bb Z$ making the following diagram commute:
\[\xymatrix{
\bb Z/p^{N}\bb Z\ar[r]^{\op{id}}\ar[d]_{\vartheta_{r+N}^N} & \bb Z/p^N\bb Z\ar[d]^{\vartheta_r^N}\\
Z_{r+N}/p^NZ_{r+N}\ar[r]_{R^N}\ar[ur]^{\exists} & Z_r/p^NZ_r
}\]
\end{lemma}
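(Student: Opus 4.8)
Statement (i) can be checked on sections over the basis of connected affinoid perfectoids $\cal U\in X_\sub{pro\'et}$ of \cite[Corol.~4.7]{Scholze2013}, over which $\bb Z/p^N\bb Z$ is the constant group $\bb Z/p^N\bb Z$ and $\vartheta_r^N$ sends $c$ to $c\cdot 1\in W_r(\hat\roi_X^+(\cal U))/p^N$. As $\hat\roi_X^+(\cal U)$ is $\bb Z$-torsion-free with $p$ a non-unit, the ring $W_r(\hat\roi_X^+(\cal U))$ is $p$-torsion-free and $1\notin pW_r(\hat\roi_X^+(\cal U))$, so $1$ has exact additive order $p^N$ modulo $p^N$; hence $c\cdot 1=0$ forces $c=0$. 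Granting (i), the ``hence equals'' in (ii) is immediate from $R^N\circ\vartheta_{r+N}^N=\vartheta_r^N$ (recorded just above the lemma), and the existence and uniqueness of the dashed arrow is then a formal consequence of (i) together with the inclusion $\op{Im}R^N\subseteq\op{Im}\vartheta_r^N$: indeed $R^N$ then factors uniquely through the monomorphism $\vartheta_r^N$, and the resulting factor composes with $\vartheta_{r+N}^N$ to the identity by the same compatibility. So everything reduces to proving $\op{Im}R^N\subseteq\op{Im}\vartheta_r^N$.

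For this I would work over an affinoid perfectoid $\cal U$ as in the preceding lemma, so that both $\theta_{r+N}\colon\bb A_{\sub{inf},X}\onto W_{r+N}(\hat\roi_X^+)$ and $\phi-1$ on $\bb A_{\sub{inf},X}$ are surjective on $\cal U$-sections. Given $z\in Z_{r+N}(\cal U)$, lift it along $\theta_{r+N}$ to some $\tilde z\in\bb A_{\sub{inf},X}(\cal U)$; from the square relating $\phi-1$ to $F-R$ via the $\theta$'s (proof of Lemma \ref{lemma_AS_maps}(iv)) and $(F-R)z=0$ one gets $(\phi-1)\tilde z\in\ker\theta_{r+N-1}$, while $\theta_r=R^N\circ\theta_{r+N}$ gives $R^Nz=\theta_r(\tilde z)$. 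If $(\phi-1)\tilde z=(\phi-1)y+p^Ns$ with $y\in\ker\theta_r$, then solving $(\phi-1)t=s$ by Lemma \ref{lemma_AS_maps}(ii) shows $\tilde z-y-p^Nt\in\ker(\phi-1)=\hat{\bb Z}_p$; applying $\theta_r$, using $\theta_r(\ker\theta_r)=0$, $\theta_r(\hat{\bb Z}_p)\bmod p^N=\op{Im}\vartheta_r^N$, and $Z_r\cap p^NW_r(\hat\roi_X^+)=p^NZ_r$ ($p$-torsion-freeness again), we conclude $R^Nz\bmod p^N\in\op{Im}\vartheta_r^N$. Since $(\phi-1)\tilde z\in\ker\theta_{r+N-1}$, this proves (ii) once one establishes the inclusion of subsheaves
\[\ker\theta_{r+N-1}\ \subseteq\ (\phi-1)\bigl(\ker\theta_r\bigr)+p^N\bb A_{\sub{inf},X}.\]

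This last inclusion is the crux. Using $\ker\theta_m=\xi_m\bb A_{\sub{inf},X}$ and the identity $(\phi-1)(\xi_rb)=\xi_{r-1}\bigl(\phi(\xi)\phi(b)-\phi^{1-r}(\xi)b\bigr)$, and dividing through by the non-zero-divisor $\xi_{r-1}$ (valid modulo $p^N$ since $W_{r-1}(\hat\roi_X^+)$ is $p$-torsion-free), it becomes the inclusion $P\cdot\bb A_{\sub{inf},X}\subseteq\op{Im}\Psi+p^N\bb A_{\sub{inf},X}$, where $P=\xi_{r+N-1}/\xi_{r-1}$ is an explicit product of $N$ Frobenius twists of $\xi$ and $\Psi(b)=\phi(\xi)\phi(b)-\phi^{1-r}(\xi)b$. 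Iterating the congruence $\phi^{1-r}(\xi)b\equiv\phi(\xi)\phi(b)\pmod{\op{Im}\Psi}$ exactly $N$ times, and using the telescoping relations among the twists (so that $\phi(\xi)\phi^2(\xi)\cdots\phi^N(\xi)=\tilde\xi_N$) together with the fact that $\phi$ is an automorphism of $\bb A_{\sub{inf},X}$, collapses $P\cdot\bb A_{\sub{inf},X}$ modulo $\op{Im}\Psi$ onto $\tilde\xi_N\cdot\bb A_{\sub{inf},X}$; one is thus reduced to $\tilde\xi_N\cdot\bb A_{\sub{inf},X}\subseteq\op{Im}\Psi+p^N\bb A_{\sub{inf},X}$. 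Modulo $p$ this is a contraction-mapping computation in the perfect ring $\hat\roi_{X^\flat}^+$ (using that $\xi$ is topologically nilpotent and that one may extract $p$-th roots), and I expect the passage from there to the statement modulo $p^N$ --- the real obstacle, and the structural reason that it takes exactly $N$ restriction steps to wash out the infinite ramification of $\hat\roi_X^+$ --- to go through a dévissage along $0\to\bb A_{\sub{inf},X}/p^{N-1}\to\bb A_{\sub{inf},X}/p^{N}\to\bb A_{\sub{inf},X}/p\to 0$, the identity $\tilde\xi_N=\tilde\xi_1\cdot\phi(\tilde\xi_{N-1})$, and the quantitative comparison of the $\mu$-adic (equivalently $\xi$-adic) and $p$-adic filtrations on $\bb A_{\sub{inf},X}$ afforded by the bounds $p^r\in([\zeta_{p^r}]-1)\bb A_{\sub{inf}}$ and $([\zeta_{p^r}]-1)^{p^{r-1}}\in p\bb A_{\sub{inf}}$ of \cite[Rmk.~3.16]{Morrow_BMSnotes}. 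Everything else is bookkeeping; this comparison is the delicate point.
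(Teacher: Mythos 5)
Your argument for (i) works, by a route slightly different from the paper's: you invoke $p$-torsion-freeness of $W_r(\hat\roi_X^+(\cal U))$ over a basis of the pro-\'etale site, whereas the paper projects further down to $\hat\roi_X^+/p^N$ and simply checks that $\bb Z/p^N\bb Z\to R^+/p^NR^+$ is injective on an affinoid basis of $X_\sub{\'et}$, which avoids any Witt-vector considerations. Your reduction of (ii) to the inclusion $\ker\theta_{r+N-1}\subseteq(\phi-1)(\ker\theta_r)+p^N\bb A_{\sub{inf},X}$ is also correct and is precisely the reduction the paper makes, there phrased as: the image of $\phi-1\colon\xi_rW_N(\roi_X^{+\flat})\to\xi_{r-1}W_N(\roi_X^{+\flat})$ contains $\xi_{r+N-1}W_N(\roi_X^{+\flat})$.

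The crucial step, however --- your ``telescoping'' argument --- has a genuine gap. After dividing by $\xi_{r-1}$ you must show, writing $\lambda=\phi^{1-r}(\xi)$ and $\Psi(b)=\phi(\xi)\phi(b)-\lambda b$, that $P\cdot\bb A_{\sub{inf},X}\subseteq\op{Im}\Psi+p^N\bb A_{\sub{inf},X}$ where $P=\xi_{r+N-1}/\xi_{r-1}$. You propose to iterate the congruence $\lambda b\equiv\phi(\xi)\phi(b)\pmod{\op{Im}\Psi}$ a total of $N$ times. But $\op{Im}\Psi$ is only an additive subgroup of $\bb A_{\sub{inf},X}$ ($\Psi$ is not $\bb A_{\sub{inf},X}$-linear): it is neither an ideal nor $\phi$-stable, so already the second iteration --- which requires either multiplying the congruence by $\phi(\xi)$ or applying $\phi$ to it --- is not legitimate. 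Concretely, $\phi(\xi)\Psi(b)-\Psi(\xi b)=\lambda\bigl(\xi-\phi(\xi)\bigr)b$, which has no reason to lie in $\op{Im}\Psi$. So the claimed collapse of $P\cdot\bb A_{\sub{inf},X}$ onto $\tilde\xi_N\cdot\bb A_{\sub{inf},X}$ modulo $\op{Im}\Psi$ is not established; nor is the remaining step $\tilde\xi_N\bb A_{\sub{inf},X}\subseteq\op{Im}\Psi+p^N\bb A_{\sub{inf},X}$, which you only sketch. The paper's proof sidesteps both problems by working with $\phi^N-1$ rather than iterating $\phi-1$: it shows $\phi^N-1$ is an automorphism of $\xi_{r+N}W_N(\roi_X^{+\flat})$ by filtering by the submodule $\mu W_N(\roi_X^{+\flat})$ --- on the sub, $\phi^N-1$ is an isomorphism by the contraction argument you also cite, while on the quotient $\xi_{r+N}W_N/\mu W_N$ it equals $-\op{id}$, because $\phi^N(\xi_{r+N})=\tilde\xi_N\xi_r$ and $\tilde\xi_N\equiv p^N\equiv 0\pmod\mu$ in $W_N(\roi_X^{+\flat})$. (So $\tilde\xi_N$ enters through the exact identity $\phi^N(\xi_{r+N})=\tilde\xi_N\xi_r$, not through any congruence $P\equiv\tilde\xi_N$.) One then factors $\phi^N-1=(\phi-1)\circ(\phi^{N-1}+\cdots+1)$ and notes that $\phi^{N-1}+\cdots+1$ carries $\xi_{r+N-1}W_N(\roi_X^{+\flat})$ into $\xi_rW_N(\roi_X^{+\flat})$, giving the desired image statement directly. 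You should replace the telescoping step with this $\phi^N-1$ d\'evissage; the quantitative comparison of $p$-adic and $\mu$-adic filtrations you flag as the delicate point is then not needed at all.
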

\begin{proof}
(i): By composing with the map to $W_r(\hat\roi_X^+)/p^N$ and then the projection to $\hat\roi_X^+/p^N$, it is enough to show that $\bb Z/p^N\bb Z\to \hat\roi_X^+/p^N$ is injective. This is $\nu^*$ of the sheafification of the following map of presheaves on $X_\sub{\'et}$: \[\text{constant presheaf }\bb Z/p^N\bb Z\To\text{presheaf quotient }\roi_{X_\sub{\'et}}^+/p^N\] Since $\nu^*$ and sheafification are exact, it is enough to show that this map of presheaves is injective on a basis, i.e., that for each affinoid $\Spa(R,R^+)\in X_\sub{\'et}$ the natural map $\bb Z/p^N\bb Z\to R^+/p^NR^+$ is injective; but this follows from the fact that $R^+$ is contained inside the set of elements of $R$ which are power bounded for the $p$-adic topology.

(ii): Let $\xi_r:=\phi^{-r}(\tilde\xi_r)\in\bb A_\sub{inf}$, which we recall is a generator for the kernel of $\theta_r=\tilde\theta_r\phi^r:\bb A_{\sub{inf},X}\to W_r(\hat\roi_X^+)$. Then the induced map on the kernels of the middle and right vertical arrows of diagram (\ref{eqn_Z_bbZ}) is \[\phi-1:\xi_rW_N(\roi_X^{+\flat})\To \xi_{r-1}W_N(\roi_X^{+\flat})\] (with the pro sheaf structure corresponding to the natural inclusions $\cdots\subseteq\xi_{r+1}W_N(\roi_X^{+\flat})\subseteq \xi_rW_N(\roi_X^{+\flat})\subseteq\cdots$), so we will analyse the sections of this on any affinoid perfectoid $\cal U$; let $A^{\flat+}:=\hat\roi^+_{X^\flat}$ be the corresponding tilted integral perfectoid ring. First observe that $\phi$ is contracting on multiples of $\mu$ and so $\phi-1:\mu W_N(A^{+\flat})\to\mu W_N(A^{+\flat})$ is an isomorphism; more precisely, this isomorphism follows from the facts that $\phi^r(\mu)=\tilde\xi_r\mu$ for all $r\ge1$ and that $W_N(A^{+\flat})\isoto\projlim_rW_N(A^{+\flat})/\tilde\xi_r$. In the same way, the map $\phi^N-1:\mu W_N(A^{+\flat})\to\mu W_N(A^{+\flat})$ is also an isomorphism.

Next observe that $\phi^N(\xi_{r+N})$ is divisible by $\phi^N(\xi_N)$, which is $\equiv p^N=0$ mod $\mu$; that is, $\phi^N(\xi_{r+N})$ is divisible by $\mu$ in $W_N(A^\flat)$. So the following diagram of short exact sequences is well-defined and commutative:
\[\xymatrix{
0\ar[d] & 0\ar[d]\\
\mu W_N(A^{+\flat})\ar[r]^{\phi^N-1}\ar[d] & \mu W_N(A^{+\flat})\ar[d] \\
\xi_{r+N} W_N(A^{+\flat})\ar[r]^{\phi^N-1}\ar[d] & \xi_{r+N} W_N(A^{+\flat})\ar[d]\\
\xi_{r+N} W_N(A^{+\flat})/\mu W_N(A^\flat)\ar[r]^{\op{id}}\ar[d] & \xi_{r+N} W_N(A^{+\flat})/\mu W_N(A^{+\flat})\ar[d]\\
0&0
}\]
Since the top and bottom horizontal arrows are isomorphisms, we deduce that the middle arrow is also an isomorphism. By writing $\phi^N-1=(\phi-1)(\phi^{N-1}+\cdots+\phi+1)$, this isomorphism implies that \[\phi-1:\xi_{r+N} W_N(A^{+\flat})\To \xi_{r+N-1} W_N(A^{+\flat})\] is injective, and that the image of \[\phi-1:\xi_{r} W_N(A^{+\flat})\To \xi_{r-1} W_N(A^{+\flat})\] contains $\xi_{r+N-1}W_N(A^{+\flat})$. In particular, the previous line is an isomorphism when we pass to pro abelian groups over $r\ge1$.

An easy diagram chase now gives (ii), and the existence of the commutative diagram then follows from elementary algebra.
\end{proof}

\subsection{Further Artin--Schreier maps}\label{subsection_more_as}
In the following results we continue to study various Artin--Schreier maps on pro-\'etale sheaves and pro-\'etale cohomology. From now on we will always regard $W_r(\roi)$ as an $\bb A_\sub{inf}$-algebra through $\tilde\theta_r$. In particular, this means that if $\al\in\bb A_\sub{inf}$, then the notation $\al R:W_r(\roi)\to W_{r-1}(\roi)$ denotes the map $\tilde\theta_{r-1}(\al)R$ (which is not a map of $\bb A_\sub{inf}$-modules according to this convention, as the restriction $R$ does not commute with the maps $\tilde\theta_r$).

The following underlies all further results in this subsection:

\begin{lemma}\label{lemma_F-R_surj}
Fix $\al\in\xi\bb A_\sub{inf}$. For any $r>1$, the map \[F-\al R:W_r(\hat\roi_X^+)/[\zeta_{p^r}]-1\To W_{r-1}(\hat\roi_X^+)/[\zeta_{p^{r-1}}]-1\] of pro-\'etale sheaves is surjective and its kernel $=\ker F\subseteq\ker \al R$. Hence there exist (necessarily unique) morphisms of sheaves $W_r(\hat\roi_X^+)/[\zeta_{p^r}]-1\to W_r(\hat\roi_X^+)/[\zeta_{p^r}]-1$ for $r\ge1$ making the following diagrams commute:
\[\xymatrix{
W_r(\hat\roi_X^+)/[\zeta_{p^r}]-1\ar[r]^\exists\ar[d]_{F-\al R}\ar[dr]^{\al R} & W_r(\hat\roi_X^+)/[\zeta_{p^r}]-1\ar[d]^{F-\al R}\\
W_{r-1}(\hat\roi_X^+)/[\zeta_{p^{r-1}}]-1\ar[r]^{\exists} & W_{r-1}(\hat\roi_X^+)/[\zeta_{p^{r-1}}]-1
}\]
\end{lemma}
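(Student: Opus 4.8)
The plan is to show that, after identifying $W_r(\hat\roi_X^+)$ with $\bb A_{\sub{inf},X}/\tilde\xi_r$ via $\tilde\theta_r$, the map $F-\al R$ on $W_r(\hat\roi_X^+)/([\zeta_{p^r}]-1)$ is nothing but the reduction, modulo $p$-powers, of the automorphism $1-\al\phi^{-1}$ of $\bb A_{\sub{inf},X}/\mu$ supplied by Lemma~\ref{lemma_AS_maps}(iii). Concretely: $\tilde\theta_r\colon\bb A_{\sub{inf},X}\onto W_r(\hat\roi_X^+)$ has kernel $\tilde\xi_r\bb A_{\sub{inf},X}$, and since $\tilde\theta_\blob$ is compatible with $F$ (so $F$ becomes the canonical reduction $\bb A_{\sub{inf},X}/\tilde\xi_r\to\bb A_{\sub{inf},X}/\tilde\xi_{r-1}$, using $\tilde\xi_{r-1}\mid\tilde\xi_r$) while $R\tilde\theta_r=\tilde\theta_{r-1}\phi^{-1}$ (which one reads off from $\theta_r=\tilde\theta_r\phi^r$ and from the $\theta_\blob$ being compatible with $R$ and with $F$ up to $\phi$), the twisted restriction $\al R=\tilde\theta_{r-1}(\al)R$ becomes $\al\phi^{-1}$ and $F-\al R$ becomes the operator induced by $1-\al\phi^{-1}$. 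Moreover $[\zeta_{p^r}]-1=\tilde\theta_r(\mu)$, so the point of using $\tilde\theta_r$ rather than $\theta_r$ is that one divides out the \emph{fixed} element $\mu=[\ep]-1$; better still, modulo $\mu$ one has $[\ep]\equiv 1$, hence $\phi(\xi)=1+[\ep]+\cdots+[\ep]^{p-1}\equiv p$, hence (applying $\phi$, which descends mod $\mu$) $\phi^k(\xi)\equiv p$ for all $k\ge 1$, hence $\tilde\xi_r=\phi(\xi)\cdots\phi^r(\xi)\equiv p^r$ in $M:=\bb A_{\sub{inf},X}/\mu$. Thus $W_r(\hat\roi_X^+)/([\zeta_{p^r}]-1)=M/p^rM$, the map $F$ is the reduction $M/p^rM\to M/p^{r-1}M$, and $F-\al R$ (resp.\ $\al R$) is the map induced on these quotients by $u:=1-\al\phi^{-1}$ (resp.\ by $\al\phi^{-1}$) on $M$; here $\phi^{-1}$ itself does not act on $M$, but $\al\phi^{-1}$ does, precisely because $\al\in\xi\bb A_\sub{inf}$.

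With this dictionary in hand the rest is formal. By Lemma~\ref{lemma_AS_maps}(iii) (case $j=1$), $u$ is an automorphism of $M$, and $u$ --- like $\al\phi^{-1}$ --- commutes with multiplication by $p$; hence $u(p^sM)=p^sM$ and $\al\phi^{-1}(p^sM)\sseq p^sM$ for every $s\ge 0$. It follows at once that $F-\al R\colon M/p^rM\to M/p^{r-1}M$ is surjective (since $u$ is) with kernel $u^{-1}(p^{r-1}M)/p^rM=p^{r-1}M/p^rM$, which is exactly $\ker F$, and that this kernel is killed by $\al R$; this is the first assertion. For the ``Hence'' part: $F$ and $\al R$ commute on the tower of levels (both are induced on the $M/p^\blob M$ by operators that commute with reduction), and, using surjectivity of $F-\al R$ together with $\ker(F-\al R)=\ker F\sseq\ker\al R$, one defines the required self-map at level $r$ by sending $y$ to $\al R(x)$ for any $x$ at level $r+1$ with $(F-\al R)(x)=y$ --- well defined by the kernel inclusion, existing by surjectivity. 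A short diagram chase using the commutation of $F-\al R$ with $\al R$ then yields both the triangle $(F-\al R)\circ(\text{self-map})=\al R$ and the compatibility square, and uniqueness is forced because $F-\al R$ is surjective from level $r+1$ to level $r$.

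I do not expect a serious obstacle here: the argument is essentially a translation exercise that deposits all the content onto Lemma~\ref{lemma_AS_maps}(iii). The step that does need care is the dictionary of the first paragraph --- verifying that $F$, $R$, $\al R$ and hence $F-\al R$ genuinely descend modulo $[\zeta_{p^r}]-1$, that under $\tilde\theta_\blob$ they become the asserted operators, and, above all, establishing the identity $\tilde\xi_r\equiv p^r\pmod{\mu}$, which is what makes the quotients $M/p^rM$ and the $p$-adic filtration appear. Once that is set up, surjectivity, the identification of the kernel with $\ker F$, and the inclusion $\ker F\sseq\ker\al R$ all come from the single fact that the automorphism $1-\al\phi^{-1}$ of $M$ preserves the $p$-adic filtration exactly.
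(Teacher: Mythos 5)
Your proof is correct and is essentially the paper's own argument: both transfer the problem via $\tilde\theta_\blob$ to the quotient $\bb A_{\sub{inf},X}/\mu$, use the identity $\tilde\xi_r\equiv p^r\pmod\mu$ to identify $W_r(\hat\roi_X^+)/([\zeta_{p^r}]-1)$ with $(\bb A_{\sub{inf},X}/\mu)/p^r$, and then reduce everything to Lemma~\ref{lemma_AS_maps}(iii) (that $1-\al\phi^{-1}$ is an automorphism of $\bb A_{\sub{inf},X}/\mu$). The paper packages the translation as a commutative ladder citing \cite[Lem.~3.4]{Bhatt_Morrow_Scholze2} whereas you re-derive the relations $F\tilde\theta_r=\tilde\theta_{r-1}$ and $R\tilde\theta_r=\tilde\theta_{r-1}\phi^{-1}$ inline, but the content and diagram chase are the same.
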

\begin{proof}
Since $\tilde\xi_r\equiv p^r$ mod $\mu$, the isomorphism $\tilde\theta_r:\bb A_{\sub{inf},X}/\tilde\xi_r\isoto W_r(\hat\roi_X^+)$ (which sends $\mu$ to $[\zeta_{p^r}]-1$) induces modulo $\mu$ an isomorphism $\tilde\theta_r:W_r(\roi^{+\flat})/\mu\isoto W_r(\hat\roi_X^+)/[\zeta_{p^r}]-1$. These isomorphisms fit into a diagram
\[\xymatrix@C=3cm{
W_r(\roi_X^{+\flat})/\mu \ar[r]^{\tilde\theta_r}_\cong\ar@{->>}[d]_R & W_r(\hat\roi_X^+)/[\zeta_{p^r}]-1\ar@/^2cm/[dd]^{F-\al R} \ar@{->>}[d]^F\\ 
W_{r-1}(\roi_X^{+\flat})/\mu \ar[d]_{1-\al\phi^{-1}}\ar[r]^{\tilde\theta_{r-1}}_\cong & W_{r-1}(\hat\roi_X^+)/[\zeta_{p^{r-1}}]-1\\
W_{r-1}(\roi_X^{+\flat})/\mu \ar[r]^{\tilde\theta_{r-1}}_\cong & W_{r-1}(\hat\roi_X^+)/[\zeta_{p^{r-1}}]-1
}\]
which commutes thanks to the second row of commutative diagrams in \cite[Lem.~3.4]{Bhatt_Morrow_Scholze2}. But the map $1-\al\phi^{-1}$ is an isomorphism, by taking Lemma \ref{lemma_AS_maps}(iii) modulo $p^{r-1}$, whence the result follows from a trivial diagram chase.
\end{proof}

More generally for $j>1$ the following holds:

\begin{corollary}\label{corollary1}
For $j\ge 1$ and $\al\in\xi^j\bb A_\sub{inf}$, the map \[F-\al R:W_r(\hat\roi_X^+)/([\zeta_{p^r}]-1)^j\To W_{r-1}(\hat\roi_X^+)/([\zeta_{p^{r-1}}]-1)^j\] is surjective and has kernel contained inside the kernel of the map \[(\al R)^j:W_r(\hat\roi_X^+)/([\zeta_{p^r}]-1)^j\To W_{r-j}(\hat\roi_X^+)/([\zeta_{p^{r-j}}]-1)^j.\]
\end{corollary}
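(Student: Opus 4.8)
The plan is to argue by induction on $j$, the case $j=1$ being exactly Lemma~\ref{lemma_F-R_surj}. Throughout write $\pi_r:=[\zeta_{p^r}]-1$ and $\cal W_r^{(j)}:=W_r(\hat\roi_X^+)/\pi_r^j$. Two preliminary facts will be used repeatedly. First, $F$ and $\al R$ commute as maps between the sheaves $\cal W_\bullet^{(j)}$: this is a one-line consequence of the Witt-vector identity $FR=RF$ together with $F\tilde\theta_r=\tilde\theta_{r-1}$ and $R\tilde\theta_r=\tilde\theta_{r-1}\phi^{-1}$. Second, the pro-\'etale sheaf $W_r(\hat\roi_X^+)$ is $\pi_r$-torsion-free: since $\frak X$ is smooth, $\hat\roi_X^+$ is flat over $\roi$, hence $W_r(\hat\roi_X^+)$ is flat over $W_r(\roi)$, on which $\pi_r$ is a non-zero-divisor. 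Consequently multiplication by $\pi_r^{j-1}$ identifies $\cal W_r^{(1)}$ with the subsheaf $\cal K_r:=\pi_r^{j-1}W_r(\hat\roi_X^+)/\pi_r^{j}W_r(\hat\roi_X^+)$ of $\cal W_r^{(j)}$.

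For the inductive step I would filter by powers of $\pi_r$, using the short exact sequence of pro-\'etale sheaves $0\to\cal K_r\to\cal W_r^{(j)}\to\cal W_r^{(j-1)}\to 0$ and the morphism from it to the analogous sequence for $r-1$ given by $F-\al R$. The hypothesis $\al\in\xi^j\bb A_\sub{inf}$ is essential precisely for the compatibility of $F-\al R$ with the subsheaf: $F$ carries $\pi_r$ to $\pi_{r-1}$, and a direct computation with Teichm\"uller lifts gives $\pi_{r-1}=\tilde\theta_{r-1}(\xi)\cdot R(\pi_r)$, so that, writing $\al=\xi^{j-1}\al'$ with $\al'\in\xi\bb A_\sub{inf}$, the map $\al R$ sends $\pi_r^{j-1}x$ to $\pi_{r-1}^{j-1}\cdot(\al'R)(x)$. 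Hence, under the identification $\cal K_\bullet\cong\cal W_\bullet^{(1)}$ above, the maps $F-\al R$ and $\al R$ on $\cal K_\bullet$ become $F-\al'R$ and $\al'R$ on $\cal W_\bullet^{(1)}$. Now apply the inductive hypothesis on the quotient (valid since $\xi^j\bb A_\sub{inf}\subseteq\xi^{j-1}\bb A_\sub{inf}$) and Lemma~\ref{lemma_F-R_surj} for $\al'$ on the sub; surjectivity of $F-\al R$ on $\cal W_r^{(j)}$ then drops out of the snake lemma applied to this morphism of short exact sequences.

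The snake lemma moreover produces a short exact sequence $0\to\ker(F-\al'R\mid\cal W_r^{(1)})\to\ker(F-\al R\mid\cal W_r^{(j)})\to\ker(F-\al R\mid\cal W_r^{(j-1)})\to 0$, the last arrow being the natural projection. For the kernel bound, take a local section $\beta$ of $\ker(F-\al R\mid\cal W_r^{(j)})$. Its image $\bar\beta$ in $\cal W_r^{(j-1)}$ lies in $\ker(F-\al R)$, so by the inductive hypothesis $(\al R)^{j-1}\bar\beta=0$ in $\cal W_{r-j+1}^{(j-1)}$; by naturality of the projection, $\gamma:=(\al R)^{j-1}\beta$ therefore has vanishing image in $\cal W_{r-j+1}^{(j-1)}$, i.e.\ $\gamma$ is a section of $\cal K_{r-j+1}$. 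On the other hand, applying $(\al R)^{j-1}$ to $F\beta=\al R\beta$ and using $F\circ\al R=\al R\circ F$ gives $F\gamma=\al R\gamma$. Thus $\gamma$ is a section of $\ker(F-\al R\mid\cal K_{r-j+1})$, which under $\cal K_{r-j+1}\cong\cal W_{r-j+1}^{(1)}$ is $\ker(F-\al'R\mid\cal W_{r-j+1}^{(1)})$; by Lemma~\ref{lemma_F-R_surj} the latter equals $\ker F$, hence lies in $\ker(\al'R)$. Translating back, $\al R\gamma=0$ in $\cal K_{r-j}$, that is $(\al R)^{j}\beta=\al R\gamma=0$ in $\cal W_{r-j}^{(j)}$, which is the assertion. (The cases $r\le j$ are vacuous since then $\cal W_{r-j}^{(j)}=0$.)

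I expect the kernel statement to be the real obstacle. Without the argument above, the relation $(F-\al R)\beta=0$ propagates only to the tautology $(\al R)^{j}\beta=F^{j}\beta$; what forces vanishing is the \emph{equality} $\ker(F-\al'R)=\ker F$ from Lemma~\ref{lemma_F-R_surj} — not merely the inclusion $\ker F\subseteq\ker(\al'R)$ — invoked on the top graded piece $\cal K_{r-j+1}$, into which the inductive hypothesis has pushed $(\al R)^{j-1}\beta$. Keeping this chase honest also relies on the torsion-freeness of $W_r(\hat\roi_X^+)$ for the clean identification $\cal K_\bullet\cong\cal W_\bullet^{(1)}$; with $\pi_r$-power torsion present one would only recover the tautology.
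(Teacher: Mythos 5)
Your proof is correct and follows essentially the same strategy as the paper's: an induction on $j$ driven by a short exact sequence that filters $W_r(\hat\roi_X^+)/([\zeta_{p^r}]-1)^j$ by powers of $[\zeta_{p^r}]-1$, with the snake lemma delivering surjectivity and a diagram chase combined with $\ker(F-\al'R)=\ker F$ on the top graded piece delivering the kernel bound. The only variation is orientation: you take the subobject to be $([\zeta_{p^r}]-1)^{j-1}W_r/([\zeta_{p^r}]-1)^jW_r\cong W_r/([\zeta_{p^r}]-1)$ (so Lemma~\ref{lemma_F-R_surj} is applied on the sub, with $\al'=\al\xi^{1-j}$, and the inductive hypothesis on the quotient), whereas the paper's sequence~(\ref{eqn_ses}) takes the subobject to be $W_r/([\zeta_{p^r}]-1)^{j-1}$ embedded via multiplication by $[\zeta_{p^r}]-1$ (inductive hypothesis on the sub with $\al'=\al\xi^{-1}$, Lemma on the quotient).
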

\begin{proof}
The case $j=1$ follows from the previous lemma. The general case follows by induction on $j>1$ using the following diagram of short exact sequences:
\begin{equation}\xymatrix{
0\ar[r] &W_r(\hat\roi_X^+)/([\zeta_{p^r}]-1)^{j-1}\ar[r]^{[\zeta_{p^r}]-1} \ar[d]_{F-\al'R}&W_r(\hat\roi_X^+)/([\zeta_{p^r}]-1)^j\ar[r]\ar[d]_{F-\al R} & W_r(\hat\roi_X^+)/[\zeta_{p^r}]-1\ar[r] \ar[d]_{F-\al R}& 0\\
0\ar[r] &W_{r-1}(\hat\roi_X^+)/([\zeta_{p^r}]-1)^{j-1}\ar[r]^{[\zeta_{p^{r-1}}]-1} &W_{r-1}(\hat\roi_X^+)/([\zeta_{p^{r-1}}]-1)^j\ar[r] &W_{r-1}(\hat\roi_X^+)/[\zeta_{p^{r-1}}]-1\ar[r]& 0
}\label{eqn_ses}\end{equation}
where $\al':=\al\xi^{-1}$.
\end{proof}

\begin{corollary}\label{corollary2}
Let $i\ge0$ and $j\ge1$.
\begin{enumerate}
\item Given $\al\in\xi^j\bb A_\sub{inf}$ (resp.~$\in\xi^{j+1}\bb A_\sub{inf}$), the map \[F-\al R:(R^i\nu_*W_r(\hat\roi_X^+))/([\zeta_{p^r}]-1)^j\to (R^i\nu_*W_{r-1}(\hat\roi_X^+))/([\zeta_{p^{r-1}}]-1)^j\] becomes injective (resp.~an isomorphism) after applying $\projlimf_{r\sub{ wrt }\al R}$.
\item Given $\al'\in\bb A_\sub{inf}$ (resp.~$\in\xi\bb A_\sub{inf}$), the map \[F-\al'R:(R^i\nu_*W_r(\hat\roi_X^+))[([\zeta_{p^r}]-1)^j]\to (R^i\nu_*W_{r-1}(\hat\roi_X^+))[([\zeta_{p^{r-1}}]-1)^j]\] becomes surjective (resp.~an isomorphism) after applying $\projlimf_{r\sub{ wrt }\al' R}$.
\end{enumerate}
\end{corollary}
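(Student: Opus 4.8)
The plan is to deduce both parts from Corollary~\ref{corollary1} by pushing forward along $\nu$, passing to the pro-category, and then comparing with the two-step ``universal-coefficients'' filtration coming from multiplication by $([\zeta_{p^r}]-1)^j$ on $W_r(\hat\roi_X^+)$.

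First I would fix $\al\in\xi^j\bb A_\sub{inf}$. By Corollary~\ref{corollary1} there is a short exact sequence of pro-\'etale sheaves
\[0\To K_r\To W_r(\hat\roi_X^+)/([\zeta_{p^r}]-1)^j\xTo{\,F-\al R\,}W_{r-1}(\hat\roi_X^+)/([\zeta_{p^{r-1}}]-1)^j\To 0,\]
whose kernel $K_r$ is killed by the $j$-fold composite of the maps $\al R$; hence the pro-system $(K_r)_r$ with transition maps $\al R$ is pro-zero, and so is $(R^i\nu_*K_r)_r$ for every $i$. Applying $R\nu_*$, reading off the long exact sequence of cohomology sheaves, and passing to pro-systems with respect to $\al R$, the outer terms vanish and we obtain that $F-\al R$ is a pro-isomorphism on $R^i\nu_*\big(W_r(\hat\roi_X^+)/([\zeta_{p^r}]-1)^j\big)$ for every $i\ge 0$. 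Next, $R\nu_*$ applied to $0\to W_r(\hat\roi_X^+)\xTo{([\zeta_{p^r}]-1)^j}W_r(\hat\roi_X^+)\to W_r(\hat\roi_X^+)/([\zeta_{p^r}]-1)^j\to 0$ gives a short exact sequence of pro-sheaves
\[0\To (R^i\nu_*W_r(\hat\roi_X^+))/([\zeta_{p^r}]-1)^j\To R^i\nu_*\big(W_r(\hat\roi_X^+)/([\zeta_{p^r}]-1)^j\big)\To (R^{i+1}\nu_*W_r(\hat\roi_X^+))[([\zeta_{p^r}]-1)^j]\To 0,\]
on which, using $F([\zeta_{p^r}]-1)=[\zeta_{p^{r-1}}]-1$ together with $\tilde\theta_{r-1}(\xi)\,R([\zeta_{p^r}]-1)=[\zeta_{p^{r-1}}]-1$, one checks that $F-\al R$ acts compatibly as soon as $\al\in\xi^j\bb A_\sub{inf}$. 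The snake lemma in the abelian category of pro-sheaves --- with the middle map now a pro-isomorphism --- then yields simultaneously that $F-\al R$ is pro-injective on the left-hand quotient (the injectivity half of (i)), pro-surjective on the right-hand torsion sheaf, and that its pro-cokernel on the quotient is pro-isomorphic to its pro-kernel on the torsion.

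To turn the injective statements into isomorphisms I would use two more inputs. By Corollary~\ref{corollary_torsion_bound}(iii) the torsion sheaf $(R^i\nu_*W_r(\hat\roi_X^+))[([\zeta_{p^r}]-1)^j]$ is independent of $j\ge 1$: it equals $(R^i\nu_*W_r(\hat\roi_X^+))[[\zeta_{p^r}]-1]$, which is killed by $[\zeta_{p^r}]-1$ and hence by $p^r$; thus all torsion statements reduce to $j=1$. Moreover, on any such torsion sheaf one has $(\al'R)^k=\tilde\theta_{r-k}\big(\prod_{i=0}^{k-1}\phi^{-i}(\al')\big)R^k$, with the image of $R^k$ there killed by $R^k([\zeta_{p^r}]-1)=\tilde\theta_{r-k}(\phi^{-k}(\mu))$, modulo which $\tilde\theta_{r-k}(\xi_k)\equiv p^k$ (as $\mu=\xi_k\,\phi^{-k}(\mu)$); so for $\al'\in\xi^2\bb A_\sub{inf}$ the operator $(\al'R)^k$ becomes divisible by $p^{2k}$ on the torsion sheaf and therefore vanishes once $2k\ge r$. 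Hence for $\al\in\xi^{j+1}\bb A_\sub{inf}\subseteq\xi^2\bb A_\sub{inf}$ the torsion pro-system is pro-zero with respect to $\al R$, the pro-kernel of $F-\al R$ on it vanishes, and by the snake-lemma isomorphism its pro-cokernel on the quotient vanishes too --- giving the isomorphism half of (i).

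What remains is (ii) in full: pro-surjectivity of $F-\al'R$ on $(R^i\nu_*W_r(\hat\roi_X^+))[[\zeta_{p^r}]-1]$ for an \emph{arbitrary} $\al'\in\bb A_\sub{inf}$, and the full pro-isomorphism for $\al'\in\xi\bb A_\sub{inf}$ --- which beyond the above also needs pro-injectivity of $F-\al'R$ on that torsion sheaf already when $\al'\in\xi\bb A_\sub{inf}$, a range in which the torsion pro-system is no longer pro-zero and none of the filtration sequences above is $F-\al'R$-compatible. I expect this to be the real obstacle. My plan for it is a direct analysis of the pro-system $T_r:=(R^i\nu_*W_r(\hat\roi_X^+))[[\zeta_{p^r}]-1]$ in the spirit of the ``$\phi$ is contracting on multiples of $\mu$'' arguments in the proof of Lemma~\ref{lemma_p_adic_cyces_via_witt}: one has $F^k(m)=(\al'R)^k(m)$ for any $m\in\ker(F-\al'R)$, so on that kernel the transition maps of the pro-system agree with iterates of $F$; combining this with the contracting estimate just noted for $\al'R$ and with the surjectivity of $F\colon W_r(\hat\roi_X^+)\onto W_{r-1}(\hat\roi_X^+)$ on affinoid perfectoid sections (and the resulting long exact sequence of $R\nu_*$) should force the relevant pro-kernels to vanish and pro-cokernels to be hit. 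Pinning down the exact torsion bounds in this last step is where the care will be needed.
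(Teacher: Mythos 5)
The heart of the paper's proof is an observation that your proposal misses: when you take $R\nu_*$ of the sequence $0\to W_r(\hat\roi_X^+)\xTo{([\zeta_{p^r}]-1)^j}W_r(\hat\roi_X^+)\to W_r(\hat\roi_X^+)/([\zeta_{p^r}]-1)^j\to0$, the operator that one puts on the left-hand copy to make the squares commute is $F-\al'R$ with $\al'=\al\xi^{-j}$, not $F-\al R$. Concretely, using $\tilde\theta_{r-1}(\xi^j)=\tfrac{([\zeta_{p^{r-1}}]-1)^j}{([\zeta_{p^r}]-1)^j}$ one has $(F-\al R)\circ([\zeta_{p^r}]-1)^j=([\zeta_{p^{r-1}}]-1)^j\circ(F-\al'R)$, so in the three-term sequence
\[
0\To (R^i\nu_*W_r)/([\zeta_{p^r}]-1)^j\To R^i\nu_*\bigl(W_r/([\zeta_{p^r}]-1)^j\bigr)\To (R^{i+1}\nu_*W_r)[([\zeta_{p^r}]-1)^j]\To 0
\]
the left and middle vertical arrows are $F-\al R$ but the right vertical arrow is $F-\al'R$, with pro-transition maps $\al R$ on the left and middle and $\al'R$ on the right. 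You instead wrote that ``$F-\al R$ acts compatibly'' on all three terms, which is false and is precisely why your snake lemma only seems to give surjectivity in (ii) for $\al R$ with $\al\in\xi^j\bb A_\sub{inf}$ rather than for arbitrary $\al'R$. Once one keeps track of the twist, putting $\al=\al'\xi^j$ makes Corollary~\ref{corollary1} apply to the middle column and the snake lemma immediately delivers \emph{both} halves of the statement simultaneously: injectivity in (i) on the left term, and surjectivity in (ii) on the right term, for any $\al'\in\bb A_\sub{inf}$.

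Because you did not see this, the part of your proposal dealing with (ii) — pro-surjectivity of $F-\al'R$ for arbitrary $\al'$, and pro-injectivity for $\al'\in\xi\bb A_\sub{inf}$ — is left as an admitted gap, and your sketched ``$\phi$ is contracting'' plan is not the right route. The paper's resolution of the remaining injectivity is a short reduction that your proposal does not find: since all $([\zeta_{p^r}]-1)$-power torsion in $R^{i+1}\nu_*W_r(\hat\roi_X^+)$ is killed by $[\zeta_{p^r}]-1$ (Corollary~\ref{corollary_torsion_bound}(iii)), the torsion subsheaf embeds $F$- and $R$-equivariantly into the quotient $R^{i+1}\nu_*W_r(\hat\roi_X^+)/[\zeta_{p^r}]-1$; but that quotient is exactly the left-hand term of the same snake sequence with $j=1$, on which pro-injectivity of $F-\al'R$ for $\al'\in\xi\bb A_\sub{inf}$ has already been established by part~(i). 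This closes (ii), and then the snake lemma forces all three vertical arrows to be pro-isomorphisms, which gives the isomorphism halves of both (i) and (ii) at once. Your separate ``pro-zero torsion'' argument for the isomorphism in (i) is not needed, and in any case uses the wrong transition maps on the torsion term.
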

\begin{proof}
Fix $\al'\in\bb A_\sub{inf}$ and put $\al=\al'\xi^j$; we prove both parts at once. Taking $R\nu_*$ of the commutative diagram
\begin{equation}\xymatrix@C=2cm{
0\ar[r] & W_r(\hat\roi_X^+)\ar[r]^{([\zeta_{p^r}]-1)^j}\ar[d]^{F-\al' R} & W_r(\hat\roi_X^+)\ar[r]\ar[d]^{F-\al R}& W_r(\hat\roi_X^+)/([\zeta_{p^r}]-1)^j\ar[r]\ar[d]^{F-\al R}& 0\\
0\ar[r] & W_{r-1}(\hat\roi_X^+)\ar[r]^{([\zeta_{p^{r-1}}]-1)^j} & W_{r-1}(\hat\roi_X^+)\ar[r]& W_{r-1}(\hat\roi_X^+)/([\zeta_{p^{r-1}}]-1)^j\ar[r] &0
}\label{eqn_diagram_1}\end{equation}
yields a commutative diagram of short exact sequences:
\begin{equation}\hspace{-1cm}\xymatrix@C=0.5cm{
0\ar[r] & (R^i\nu_*W_r(\hat\roi_X^+))/([\zeta_{p^r}]-1)^j\ar[r]\ar[d]^{F-\al R} & R^i\nu_*(W_r(\hat\roi_X^+)/([\zeta_{p^r}]-1)^j)\ar[r]\ar[d]^{F-\al R}& (R^{i+1}\nu_*W_r(\hat\roi_X^+))[([\zeta_{p^r}]-1)^j]\ar[r]\ar[d]^{F-\al'R}& 0\\
0\ar[r] & R^i\nu_*W_{r-1}(\hat\roi_X^+)/([\zeta_{p^{r-1}}]-1)^j \ar[r] & R^i\nu_*(W_{r-1}(\hat\roi_X^+)/([\zeta_{p^{r-1}}]-1)^j)\ar[r]& (R^{i+1}\nu_*W_{r-1}(\hat\roi_X^+))[([\zeta_{p^{r-1}}]-1)^j]\ar[r] &0
}\label{eqn_diagram_2}\end{equation}
According to Corollary \ref{corollary1}, the right vertical arrow of the first diagram becomes an isomorphism of pro sheaves after applying $\projlimf_{r\sub{ wrt }\al R}$; hence the middle vertical arrow of the second diagram become an isomorphism of pro sheaves after applying $\projlimf_{r\sub{ wrt }\al R}$.

It follows that the left (resp.~right) vertical arrow of the second diagram becomes injective (resp.~surjective) after applying $\projlimf_{r\sub{ wrt }\al R}$ (resp.~$\projlimf_{r\sub{ wrt }\al' R}$).

Now assume that $\al'$ is divisible by $\xi$; to complete the proof it is enough to show that the right vertical arrow of the second diagram becomes injective after applying $\projlimf_{r\sub{ wrt }\al' R}$ (because it then follows that the three vertical arrows become isomorphisms after passing to pro sheaves). Replace $i$ by $i-1$ for simplicity of notation. Since all $[\zeta_{p^r}]-1$-power torsion in $R^{i+1}\nu_*W_r(\hat\roi_X^+)$ is actually killed by $[\zeta_{p^r}]-1$ by Corollary \ref{corollary_torsion_bound}(iii), the inclusion \[R^{i+1}\nu_*W_r(\hat\roi_X^+)[([\zeta_{p^r}]-1)^j]=R^{i+1}\nu_*W_r(\hat\roi_X^+)[[\zeta_{p^r}]-1]\subseteq R^{i+1}\nu_*W_r(\hat\roi_X^+)\] induces an inclusion \[R^{i+1}\nu_*W_r(\hat\roi_X^+)[([\zeta_{p^r}]-1)^j]\subseteq R^{i+1}\nu_*W_r(\hat\roi_X^+)/[\zeta_{p^r}]-1\] which is compatible with $R$ and $F$. But since $\al'$ is now assumed to be divisible by $\xi$, we have already shown that $F-\al'R$ is injective on $\projlimf_{r\sub{ wrt }\al' R}R^{i+1}\nu_*W_r(\hat\roi_X^+)/[\zeta_{p^r}]-1$; hence it is also injective on $\projlimf_{r\sub{ wrt }\al' R}R^{i+1}\nu_*W_r(\hat\roi_X^+)[([\zeta_{p^r}]-1)^j]$, as required.
\end{proof}

Remarkably, we do not know a direct proof of the following useful consequence:

\begin{corollary}\label{corollary3}
Let $i\ge 0$ and $j\ge 1$, and let $\al\in\xi^j\bb A_\sub{inf}$ (resp.~$\in \xi^{j+1}\bb A_\sub{inf}$). Then the map \[F-\al R:W_r\Omega_{\frak X/\roi}^i/([\zeta_{p^r}]-1)^j\To W_{r-1}\Omega_{\frak X/\roi}^i/([\zeta_{p^{r-1}}]-1)^j\] becomes injective (resp.~an isomorphism) after applying $\projlimf_{r\sub{ wrt }\al R}$. 
\end{corollary}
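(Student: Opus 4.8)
The plan is to deduce this from Corollary \ref{corollary2}, which is the analogue for $R^i\nu_*W_r(\hat\roi_X^+)$, by comparing $W_r\Omega^i_{\frak X/\roi}$ with $R^i\nu_*W_r(\hat\roi_X^+)$ along the canonical map of Corollary \ref{corollary_torsion_bound}(ii). Throughout write $M_r:=R^i\nu_*W_r(\hat\roi_X^+)$ and $\lambda_r:=[\zeta_{p^r}]-1$. The case $i=0$ is immediate: then $W_r\Omega^0_{\frak X/\roi}=H^0(\tilde{W_r\Omega}_\frak X)=M_r$ compatibly with $F$ and $R$ (Theorem \ref{theorem_p-adic_Cartier} and Corollary \ref{corollary_torsion_bound}(ii)), so the claim is precisely Corollary \ref{corollary2}(i); so from now on assume $i\ge1$.

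Trivialising the Breuil--Kisin twist $\{i\}$ via the basis element $\tilde\xi_r^i$ as in the proof of Corollary \ref{corollary_torsion_bound}(iv), Corollary \ref{corollary_torsion_bound}(ii) identifies $W_r\Omega^i_{\frak X/\roi}$ with the submodule $\lambda_r^iM_r\subseteq M_r$, and diagrams (\ref{squareF'}) and (\ref{squareR'}) (the latter with $j=i$) show that under this identification the de Rham--Witt $F$ is the restriction of the Frobenius on $M_r$, while the de Rham--Witt $R$ is the restriction of the twisted map $\tilde\theta_{r-1}(\xi)^iR$ on $M_r$. Hence, setting $\beta:=\al\xi^i$ — which lies in $\xi^{i+j}\bb A_\sub{inf}$ (resp.~$\xi^{i+j+1}\bb A_\sub{inf}$) exactly when $\al\in\xi^j\bb A_\sub{inf}$ (resp.~$\xi^{j+1}\bb A_\sub{inf}$) — the induced isomorphism $W_r\Omega^i_{\frak X/\roi}/\lambda_r^j\cong\lambda_r^iM_r/\lambda_r^{i+j}M_r$ carries $F-\al R$ to the endomorphism $F-\beta R$ of $\lambda_r^iM_r/\lambda_r^{i+j}M_r$, compatibly with the pro-structures (whose transition maps also become $\beta R$); the required well-definedness assertions (e.g.\ that $\beta R$ preserves the submodules $\lambda_r^iM_r$ and descends modulo $\lambda_r^{i+j}$) all reduce, via $\tilde\theta_{r-1}(\xi)=\tfrac{[\zeta_{p^{r-1}}]-1}{[\zeta_{p^r}]-1}$, to the identity $\tilde\theta_{r-1}(\xi)^mR(\lambda_r)^m=\lambda_{r-1}^m$.

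Now I would feed this into the evident $F$- and $\beta R$-equivariant short exact sequence of inverse systems in $r$,
\[0\To\lambda_r^iM_r/\lambda_r^{i+j}M_r\To M_r/\lambda_r^{i+j}M_r\To M_r/\lambda_r^iM_r\To 0.\]
By Corollary \ref{corollary2}(i), with its exponent $j$ replaced by $i+j$ and its element $\al$ replaced by $\beta$, the endomorphism $F-\beta R$ of the middle term becomes injective (resp.~an isomorphism) after $\projlimf_{r\sub{ wrt }\beta R}$; and by the same corollary with exponent $i$ and element $\beta\in\xi^{i+j}\bb A_\sub{inf}\subseteq\xi^{i+1}\bb A_\sub{inf}$ (using $j\ge1$), the endomorphism $F-\beta R$ of the right-hand term becomes an isomorphism after $\projlimf_{r\sub{ wrt }\beta R}$. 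Since $\projlimf$ is exact and the category of pro-sheaves is abelian, the snake lemma applied to $F-\beta R$ on this short exact sequence then forces $F-\beta R$ on the left-hand term to become injective (resp.~an isomorphism) after $\projlimf_{r\sub{ wrt }\beta R}$; transporting this back via the identification of the previous paragraph yields the corollary. The step demanding genuine care is that identification — pinning down the comparison of the de Rham--Witt operators on $W_r\Omega^i_{\frak X/\roi}$ with the honest (twisted) operators on $R^i\nu_*W_r(\hat\roi_X^+)$, and in particular tracking the factor $\tilde\theta_{r-1}(\xi)^i$ in the restriction map, which is exactly what produces the shift replacing $\al$ by $\beta=\al\xi^i$ and hence forces the use of the exponent $i+j$ rather than $j$ when invoking Corollary \ref{corollary2}. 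The rest is a formal diagram chase with pro-objects.
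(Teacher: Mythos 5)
Your proof is correct, and it reaches the result by a genuinely different decomposition than the paper.

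The paper realises $W_r\Omega^i_{\frak X/\roi}$ as the \emph{quotient} of $M_r:=R^i\nu_*W_r(\hat\roi_X^+)$ by its $[\zeta_{p^r}]-1$-power torsion, via the short exact sequence
\[0\To M_r[([\zeta_{p^r}]-1)^j]\To M_r\To W_r\Omega^i_{\frak X/\roi}\To 0,\]
and the essential simplification of that choice is that the quotient map intertwines the untwisted operators $F,\al R$ on $M_r$ with the de Rham--Witt $F,\al R$ directly, with no shift in $\al$. Modding out by $([\zeta_{p^r}]-1)^j$ (using $p$-torsion-freeness of $W_r\Omega^i_{\frak X/\roi}$), the snake lemma is then fed Corollary \ref{corollary2}(i) for the middle term and Corollary \ref{corollary2}(ii) for the torsion term. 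You instead realise $W_r\Omega^i_{\frak X/\roi}$ as the \emph{submodule} $([\zeta_{p^r}]-1)^iM_r$, which is what the inclusion map of Corollary \ref{corollary_torsion_bound}(ii) most literally gives, and then the twist bookkeeping from diagrams (\ref{squareF'})--(\ref{squareR'}) forces the replacement of $\al$ by $\beta=\al\xi^i$. With that done, your three-term sequence $0\to \lambda_r^iM_r/\lambda_r^{i+j}M_r\to M_r/\lambda_r^{i+j}M_r\to M_r/\lambda_r^iM_r\to 0$ lets you deduce the result from Corollary \ref{corollary2}(i) \emph{alone}, invoked at the two exponents $i+j$ and $i$ (the latter where $j\ge1$ supplies the extra divisibility needed for the isomorphism), and you never need part (ii) of that corollary. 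Both routes work; the paper's is slightly slicker because the quotient identification makes the equivariance statement tautological, whereas your route trades the use of Corollary \ref{corollary2}(ii) for the explicit twist shift $\al\mapsto\al\xi^i$ — a real subtlety, and you are right that it is the point at which care is required, but you have tracked it correctly (the identity $\tilde\theta_{r-1}(\xi)^mR([\zeta_{p^r}]-1)^m=([\zeta_{p^{r-1}}]-1)^m$ is exactly what makes your sequence $\beta R$-equivariant and your identification of the left term with $W_r\Omega^i_{\frak X/\roi}/([\zeta_{p^r}]-1)^j$ correct on operators).
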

\begin{proof}
There are short exact sequences 
\[0\To R^iv_*W_r(\hat\roi_X^+)[([\zeta_{p^r}]-1)^j]\To R^i\nu_*W_r(\hat\roi_X^+)\To W_r\Omega^i_{\frak X/\roi}\To 0\]
compatible with $\al R$ and $F$ on each term. The sequence remains exact modulo $([\zeta_{p^r}]-1)^j$ since $W_r\Omega^i_{\frak X/\roi}$ is $[\zeta_{p^r}]-1$-torsion-free. The desired claim then follows from Corollary \ref{corollary2}, which asserts that $F-\al R$ acts as an injection on $(R^i\nu_*W_r(\hat\roi_X^+))/([\zeta_{p^r}]-1)^j$ (resp.~an automorphism if $\al\in\xi^{j+1}\bb A_\sub{inf}$) and an automorphism of $\projlimf_{r\sub{ wrt }\al R}R^iv_*W_r(\hat\roi_X^+)[([\zeta_{p^r}]-1)^j]$.
\end{proof}

In the next corollary we apply $\tau^{\le j}$ to the diagrams (\ref{squareF}) and (\ref{squareR}), and then take their difference:

\begin{corollary}\label{corollary_A}
Consider the square
\[\xymatrix{
\tau^{\le j}\tilde{W_r\Omega}_{\frak X}\{j\}\ar[d]\ar[r]^{F-R}&\tau^{\le j}\tilde{W_{r-1}\Omega}_{\frak X}\{j\}\ar[d]\\
\tau^{\le j}R\nu_*W_r(\hat\roi_X^+)\{j\}\ar[r]^{F-R}&\tau^{\le j}R\nu_*W_{r-1}(\hat\roi_X^+)\{j\}
}\]
Then the fibre of the top horizontal arrow is supported in cohomological degrees $\le j$ and, after applying $\projlimf_{r\sub{ wrt }R}$, the induced map to $\tau^{\le j}$ of the fibre of the bottom arrow is an isomorphism of pro sheaves on $\frak X_\sub{\'et}$.
\end{corollary}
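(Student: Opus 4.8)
The plan is to study the \emph{total fibre} of the square, exploiting that it can be computed in two ways. Abbreviate $A_r=\tilde{W_r\Omega}_\frak X\{j\}$ and $B_r=R\nu_*W_r(\hat\roi_X^+)\{j\}$, let $P_r$ and $Q_r$ denote the fibres of the top and bottom horizontal arrows $F-R$, and let $G_r=\op{fib}(\tau^{\le j}A_r\to\tau^{\le j}B_r)$ be the vertical fibre; by the standard identification of iterated fibres of a commutative square, the total fibre $T_r$ of the square is at the same time $\op{fib}(P_r\to Q_r)$ and $\op{fib}(G_r\xto{F-R}G_{r-1})$, and I will use both descriptions. The route is: (a) show $P_r$ is concentrated in degrees $\le j$; (b) compute the cohomology sheaves of $G_r$ from Corollary \ref{corollary_torsion_bound}; (c) use Corollary \ref{corollary2} to control $F-R$ on $H^\bullet(G_r)$ after $\projlimf_{r\sub{ wrt }R}$ and deduce $\projlimf_{r\sub{ wrt }R}H^i(T_r)=0$ for all $i\le j+1$; (d) feed this into the long exact sequence of $T_r\to P_r\to Q_r$ to identify $\projlimf_rP_r$ with $\projlimf_r\tau^{\le j}Q_r$.

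For (a): the only cohomology of $P_r$ that can be nonzero above degree $j$ is $H^{j+1}(P_r)=\op{coker}\bigl(F-R\colon H^j(A_r)\to H^j(A_{r-1})\bigr)$, which by Theorem \ref{theorem_p-adic_Cartier} equals $\op{coker}\bigl(F-R\colon W_r\Omega^j_{\frak X/\roi}\to W_{r-1}\Omega^j_{\frak X/\roi}\bigr)$ and hence vanishes by Theorem \ref{theorem_surj_on_formal_schemes}. For (b): Corollary \ref{corollary_torsion_bound}(ii), applied after tensoring the canonical map $\tilde{W_r\Omega}_\frak X\to R\nu_*W_r(\hat\roi_X^+)$ by the invertible module $W_r(\roi)\{j\}$, says that $H^i(A_r)\to H^i(B_r)$ is injective with image the $([\zeta_{p^r}]-1)^i$-multiples; plugging this into the long exact sequence of $G_r\to\tau^{\le j}A_r\to\tau^{\le j}B_r$ yields $H^i(G_r)=0$ for $i\le 1$ and for $i>j+1$, and $H^i(G_r)\cong R^{i-1}\nu_*W_r(\hat\roi_X^+)\{j\}/([\zeta_{p^r}]-1)^{i-1}$ for $2\le i\le j+1$. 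After trivialising the Breuil--Kisin twists by $\tilde\xi_r^j$, the transition map $F-R\colon H^i(G_r)\to H^i(G_{r-1})$ identifies with $F-\xi^jR$ on $R^{i-1}\nu_*W_r(\hat\roi_X^+)$ modulo $([\zeta_{p^r}]-1)^{i-1}$, and $\projlimf_{r\sub{ wrt }R}$ on the twisted side corresponds to $\projlimf_{r\sub{ wrt }\xi^jR}$ on the trivialised side.

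For (c): since $i-1\le j$ we always have $\xi^j\in\xi^{i-1}\bb A_\sub{inf}$, with $\xi^j\in\xi^i\bb A_\sub{inf}$ precisely when $i\le j$; so Corollary \ref{corollary2}(i) gives that $F-R\colon H^i(G_r)\to H^i(G_{r-1})$ becomes a pro-isomorphism for $2\le i\le j$ and a pro-injection for $i=j+1$ (and, trivially, a pro-isomorphism for $i\le 1$ and $i>j+1$, both sides being zero). From the short exact sequences $0\to\op{coker}\bigl(F-R\colon H^{i-1}(G_r)\to H^{i-1}(G_{r-1})\bigr)\to H^i(T_r)\to\ker\bigl(F-R\colon H^i(G_r)\to H^i(G_{r-1})\bigr)\to 0$ coming from $T_r=\op{fib}(G_r\xto{F-R}G_{r-1})$, one sees that for every $i\le j+1$ the cokernel term (involving $H^{i-1}(G_r)$ with $i-1\le j$) is pro-zero and the kernel term (involving $H^i(G_r)$ with $i\le j+1$, where $F-R$ is at least a pro-injection) is pro-zero; since passage to pro sheaves is exact, $\projlimf_{r\sub{ wrt }R}H^i(T_r)=0$ for all $i\le j+1$. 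Finally, for (d), the long exact cohomology sequence of $T_r\to P_r\to Q_r\to T_r[1]$ together with this vanishing forces $\projlimf_rH^i(P_r)\isoto\projlimf_rH^i(Q_r)$ for $i\le j$, while for $i>j$ both $\projlimf_rH^i(P_r)$ (by (a)) and $\projlimf_rH^i(\tau^{\le j}Q_r)$ vanish; since truncation leaves $H^{\le j}$ unchanged, the induced map $\projlimf_{r\sub{ wrt }R}P_r\to\projlimf_{r\sub{ wrt }R}\tau^{\le j}Q_r$ is an isomorphism on every cohomology pro sheaf, hence an isomorphism of pro complexes of sheaves on $\frak X_\sub{\'et}$. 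The obstacle I anticipate is twofold: keeping the twist exponents and $\xi$-divisibilities straight when invoking Corollary \ref{corollary2}, and the realisation that one must \emph{not} expect $\projlimf_rH^{j+2}(T_r)$ to vanish --- in the boundary degree $i=j+1$ Corollary \ref{corollary2}(i) supplies only pro-injectivity of $F-R$ on $H^{j+1}(G_r)$, not surjectivity, which is exactly why the comparison is stated with $\tau^{\le j}Q_r$ rather than with $Q_r$ itself.
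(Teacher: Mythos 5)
Your proof is correct and follows essentially the same strategy as the paper's: Theorem \ref{theorem_surj_on_formal_schemes} for the degree-$\le j$ bound on the top fibre, Corollary \ref{corollary_torsion_bound}(ii) to turn the cohomology of the vertical cone into quotients of $R^i\nu_*W_r(\hat\roi_X^+)$ by powers of $[\zeta_{p^r}]-1$, and Corollary \ref{corollary2} to obtain the pro-isomorphism (resp.\ pro-injection) in the internal (resp.\ top) degree. Your write-up just makes explicit the ``total fibre of the square'' diagram chase that the paper compresses into the sentence ``This is enough to deduce the desired result,'' and your invocation of Corollary \ref{corollary2} is the one the paper's text actually needs (it cites Corollary \ref{corollary3}, which is the analogous statement for $W_r\Omega^i_{\frak X/\roi}$ rather than $R^i\nu_*W_r(\hat\roi_X^+)$ and appears to be a typographical slip).
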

\begin{proof}
After trivialising the twists, the square can be rewritten as 
\[\xymatrix{
\tau^{\le j}L\eta_{[\zeta_{p^r}]-1}R\nu_*W_r(\hat\roi_X^+)\ar[d]\ar[r]^{F-\xi^jR}&\tau^{\le j}L\eta_{[\zeta_{p^{r-1}}]-1}R\nu_*W_{r-1}(\hat\roi_X^+)\ar[d]\\
\tau^{\le j}R\nu_*W_r(\hat\roi_X^+)\ar[r]^{F-\xi^jR}&\tau^{\le j}R\nu_*W_{r-1}(\hat\roi_X^+)
},\]
viewed as a pro system with transition maps $\xi^jR$.

Applying $H^j(-)$ to the top horizontal arrows yields $W_r\Omega^j_{\frak X/\roi}\xto{F-R}W_{r-1}\Omega^j_{\frak X/\roi}$, which is surjective by Theorem \ref{theorem_surj_on_formal_schemes}.

By Corollary \ref{corollary_torsion_bound}(ii), the vertical arrows are injective on cohomology, with induced map on cokernels in degree $i$ given by \[(R^i\nu_*W_r(\hat\roi_X^+))/([\zeta_{p^r}]-1)^i\xTo{F-\xi^jR}(R^i\nu_*W_r(\hat\roi_X^+))/([\zeta_{p^r}]-1)^i\] After taking $\projlimf_{r\sub{ wrt }\xi^j R}$ this becomes an isomorphism (resp.~injection) when $i<j$ (resp.~$i=j$) by Corollary \ref{corollary3}. This is enough to deduce the desired result on the homotopy fibres of the horizontal arrows.
\end{proof}

Next we use our results to show that either Breuil--Kisin or Tate twists may be used in the calculation of Frobenius fixed points; here we use the identification $W_r(\hat\roi_X^+)(j)=([\zeta_{p^r}]-1)^jW_r(\hat\roi_X^+)\{j\}$:

\begin{corollary}\label{corollary_B}
The following square of pro-\'etale sheaves becomes homotopy cartesian after applying $\projlimf_{r\sub{ wrt }R}$
\[\xymatrix{
W_r(\hat\roi_X^+)(j)\ar[d]\ar[r]^{F-R}&W_{r-1}(\hat\roi_X^+)(j)\ar[d]\\
W_r(\hat\roi_X^+)\{j\}\ar[r]^{F-R}&W_{r-1}(\hat\roi_X^+)\{j\}
}\]
\end{corollary}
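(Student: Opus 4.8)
The plan is to reduce the assertion to a statement about a single Artin--Schreier operator on quotient sheaves, which is then supplied directly by Corollary~\ref{corollary1}. I would begin from the standard fact that, in the derived category of pro sheaves, a commutative square is homotopy cartesian if and only if the map induced on the cofibres of its two vertical arrows is a quasi-isomorphism; so the first task is to identify those cofibres. Using the identification $W_r(\hat\roi_X^+)(j)=([\zeta_{p^r}]-1)^jW_r(\hat\roi_X^+)\{j\}$ recalled in Section~\ref{subsection_BK}, each vertical arrow of the square is simply the inclusion of that submodule into $W_r(\hat\roi_X^+)\{j\}$; hence its cofibre is the honest quotient sheaf $\big(W_r(\hat\roi_X^+)/([\zeta_{p^r}]-1)^j\big)\{j\}$, with no cohomological shift. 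Since the square commutes, the bottom $F-R$ carries $([\zeta_{p^r}]-1)^jW_r(\hat\roi_X^+)\{j\}$ into $([\zeta_{p^{r-1}}]-1)^jW_{r-1}(\hat\roi_X^+)\{j\}$, so it descends to the induced map of cofibres. Thus it remains to prove that \[F-R:\big(W_r(\hat\roi_X^+)/([\zeta_{p^r}]-1)^j\big)\{j\}\To\big(W_{r-1}(\hat\roi_X^+)/([\zeta_{p^{r-1}}]-1)^j\big)\{j\}\] becomes an isomorphism of pro sheaves after applying $\projlimf_{r\sub{ wrt }R}$.

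Next I would trivialise the Breuil--Kisin twists by the basis element $\tilde\xi_r^j$ mod $\tilde\xi_r^{j+1}$, exactly as in the proof of Corollary~\ref{corollary_A}, so that the displayed map becomes \[F-\xi^jR:W_r(\hat\roi_X^+)/([\zeta_{p^r}]-1)^j\To W_{r-1}(\hat\roi_X^+)/([\zeta_{p^{r-1}}]-1)^j,\] where $\xi^jR$ abbreviates $\tilde\theta_{r-1}(\xi)^jR$ in the convention of Section~\ref{subsection_more_as}; crucially, the pro structure ``wrt $R$'' on the twisted sheaves is carried by this trivialisation to the pro structure over the transition maps $\xi^jR$. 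Since $\xi^j\in\xi^j\bb A_\sub{inf}$, Corollary~\ref{corollary1} applies verbatim: $F-\xi^jR$ is surjective at every finite level, and its kernel is annihilated by $(\xi^jR)^j$, i.e.\ by descending $j$ steps in the pro system. Just as in the proofs of Corollaries~\ref{corollary2} and~\ref{corollary3}, this forces $F-\xi^jR$ to be an isomorphism of pro sheaves, completing the argument. The case $j=0$ is trivial, the square then consisting of identity arrows between copies of $W_r(\hat\roi_X^+)\xto{F-R}W_{r-1}(\hat\roi_X^+)$.

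I do not expect a genuine obstacle, since all the analytic content is already packaged in Corollary~\ref{corollary1}; the one step deserving attention is the bookkeeping of restriction maps flagged above --- checking that the twisted restriction on $W_r(\hat\roi_X^+)\{j\}$, the localised restriction on $W_r(\hat\roi_X^+)(j)$, and the operator $\xi^jR$ of Section~\ref{subsection_more_as} all coincide under the trivialisation, so that ``wrt $R$'' in the statement really does become ``wrt $\xi^jR$''. This is shallow, but it is precisely where a normalisation slip could creep in; it rests on the explicit formulae for $F$ and $R$ on the Breuil--Kisin twists, and on their compatibility with the submodule identification $W_r(\hat\roi_X^+)(j)=([\zeta_{p^r}]-1)^jW_r(\hat\roi_X^+)\{j\}$, both recorded in Section~\ref{subsection_BK}.
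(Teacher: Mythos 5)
Your proposal is correct and follows essentially the same route as the paper: identify the vertical arrows as the inclusions $([\zeta_{p^r}]-1)^j W_r(\hat\roi_X^+)\{j\}\into W_r(\hat\roi_X^+)\{j\}$, pass to their cofibres, trivialise, and observe that the induced map of pro sheaves $F-\xi^jR$ on $W_r(\hat\roi_X^+)/([\zeta_{p^r}]-1)^j$ is a pro-isomorphism. One small remark: you invoke Corollary~\ref{corollary1} for the final step, while the paper's printed proof cites Corollary~\ref{corollary2}; since Corollary~\ref{corollary1} is the statement about the sheaves $W_r(\hat\roi_X^+)/([\zeta_{p^r}]-1)^j$ on $X_\sub{pro\'et}$ themselves (surjectivity at each level plus kernel killed by $(\al R)^j$), whereas Corollary~\ref{corollary2} concerns the pushforward sheaves $R^i\nu_*W_r(\hat\roi_X^+)$ on $\frak X_\sub{\'et}$ and, for $\al\in\xi^j\bb A_\sub{inf}$, only yields injectivity, your citation is the apt one and the paper's appears to be a minor slip.
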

\begin{proof}
The vertical arrows are injective, with induced maps on the cokernels represented by \[F-\xi^jR:W_r(\hat\roi_X^+)/([\zeta_{p^r}]-1)^j\To W_{r-1}(\hat\roi_X^+)/([\zeta_{p^{r-1}}]-1)^j.\] This is an isomorphism of pro sheaves after taking $\projlimf_{r\sub{ wrt }\xi^jR}$ by Corollary \ref{corollary2}.
\end{proof}

\subsection{$p$-torsion-freeness of $p$-adic vanishing cycles}\label{section_end}
Assembling Corollaries \ref{corollary_A} and \ref{corollary_B} yields a natural homotopy fibre sequence \begin{equation}\projlimf_{r}\tau^{\le j}R\nu_*Z_r(j)\To \projlimf_{r\sub{ wrt }R}\tau^{\le j}\tilde{W_r\Omega}_{\frak X}\{j\}\xTo{F-R}\projlimf_{r\sub{ wrt }R}\tau^{\le j}\tilde{W_{r-1}\Omega}_{\frak X}\{j\}.\label{eqn_prelim}\end{equation} To convert this into Theorem \ref{main_theorem_over_C} we need to know that $p$-adic vanishing cycles are $p$-torsion-free; the following result makes this precise:

\begin{proposition}\label{appendix}
For any fixed $j\ge0$, the following are equivalent:
\begin{enumerate}
\item Theorem \ref{main_theorem_over_C} is true for all $N\ge1$ (with $j$ fixed).
\item $R^j\nu_*(\bb Z/p^N\bb Z)\to R^j\nu_*(\bb Z/p^{N-1}\bb Z)$ is surjective for all $N\ge1$.
\item The pro sheaf $\projlimf_rR^{j+1}\nu_*Z_r$ is $p$-torsion-free.
\item The cokernel of $\projlimf_{r\sub{ wrt }\xi R}W_r\Omega^{j}_{\frak X/\roi}\xto{F-\xi R}\projlimf_{r\sub{ wrt }\xi R}W_{r-1}\Omega^{j}_{\frak X/\roi}$ is $p$-torsion-free.
\item The map \[\projlimf_{r\sub{ wrt }\xi R}W_r\Omega^{j}_{\frak X/\roi}/([\zeta_{p^r}]-1,p)\xto{F-\xi R}\projlimf_{r\sub{ wrt }\xi R}W_{r-1}\Omega^{j}_{\frak X/\roi}/([\zeta_{p^{r-1}}]-1,p)\] is injective.
\end{enumerate}
More precisely, the following are naturally isomorphic: the $p$-torsion of the pro sheaf in (iii), the $p$-torsion of the cokernel in (iv), and the kernel in (v).
\end{proposition}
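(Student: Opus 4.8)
The plan is to prove a cycle of implications, using the homotopy fibre sequence (\ref{eqn_prelim}) and the short exact sequences (\ref{eqn_Z_bbZ}) as the main tools, and simultaneously extract the ``more precisely'' isomorphisms as a byproduct of the arguments connecting (iii), (iv), (v). First I would reduce everything to bookkeeping about the fibre sequence (\ref{eqn_prelim}). Modding (\ref{eqn_prelim}) out by $p^N$ and comparing with the statement of Theorem \ref{main_theorem_over_C}, the only obstruction to passing $p^N$ inside $\tau^{\le j}$ on the left-hand term $\projlimf_r\tau^{\le j}R\nu_*Z_r(j)$ is $p$-torsion in $H^{j+1}$ of $\projlimf_r\tau^{\le j}R\nu_*Z_r(j)$, i.e. in the pro sheaf $\projlimf_r R^{j+1}\nu_* Z_r$ (the twist $(j)$ is harmless since $([\zeta_{p^r}]-1)^j$ is a non-zero-divisor and becomes a unit in the relevant localised picture). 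This gives the equivalence of (i) and (iii), and identifies the obstruction as exactly the $p$-torsion in the pro sheaf of (iii). For the equivalence with (ii): from the long exact cohomology sequence of $0\to Z_r\to W_r(\hat\roi_X^+)\xto{F-R}W_{r-1}(\hat\roi_X^+)\to 0$ together with Lemma \ref{lemma_p_adic_cyces_via_witt} (which identifies $\projlimf_r Z_r/p^N Z_r$ with the constant sheaf $\bb Z/p^N\bb Z$), one rewrites $\projlimf_r R^{j}\nu_* Z_r$ in terms of $R^{j}\nu_*(\bb Z/p^N\bb Z)$ via the pro-system over $N$; the Bockstein sequence associated to $0\to\bb Z/p\to\bb Z/p^N\to\bb Z/p^{N-1}\to0$ then shows that $p$-torsion-freeness of $\projlimf_r R^{j+1}\nu_* Z_r$ is equivalent to surjectivity of $R^j\nu_*(\bb Z/p^N\bb Z)\to R^j\nu_*(\bb Z/p^{N-1}\bb Z)$, i.e. to (\ref{eqn_BK}) splitting into short exact sequences.

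Next I would connect (iii) to the de Rham--Witt statements (iv) and (v). Using the short exact sequence of pro sheaves $0\to R^{j+1}\nu_*W_r(\hat\roi_X^+)[([\zeta_{p^r}]-1)^\infty]\to R^{j+1}\nu_*W_r(\hat\roi_X^+)\to (\text{torsion-free part})\to 0$, and Corollary \ref{corollary_torsion_bound}(iii) (all $[\zeta_{p^r}]-1$-power torsion is killed by $[\zeta_{p^r}]-1$, hence by $p^r$), together with Corollary \ref{corollary2}(ii) which says $F-\xi R$ is an automorphism of the pro sheaf $\projlimf_r R^{j+1}\nu_*W_r(\hat\roi_X^+)[([\zeta_{p^r}]-1)^j]$: this lets me identify the long exact sequence computing $\projlimf_r R^{j+1}\nu_* Z_r$ with the cokernel/kernel data of $F-\xi R$ acting on $\projlimf_r W_r\Omega^j_{\frak X/\roi}$ (after trivialising the Breuil--Kisin twist, $R$ on the twist becomes $\xi^j R = \tilde\theta_{r-1}(\xi)^j R$, but in the pro-system the relevant transition map is $\xi R$, matching the statements of (iv), (v)). Concretely, the snake lemma applied to the diagram expressing $Z_r(j)$ as a fibre, combined with Corollary \ref{corollary3} (which gives injectivity of $F-\xi R$ on $\projlimf W_r\Omega^j/([\zeta_{p^r}]-1)^j$ and automorphism on the torsion), shows $\projlimf_r R^{j+1}\nu_* Z_r$ is an extension of $\op{coker}(F-\xi R \text{ on } \projlimf W_r\Omega^j)$ by something torsion-free, giving (iii)$\Leftrightarrow$(iv) and the matching of $p$-torsion subobjects. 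Then (iv)$\Leftrightarrow$(v) is the elementary observation that for a map $f$ between $p$-torsion-free pro sheaves with torsion-free-mod-$p$-reductions, $\op{coker}(f)$ is $p$-torsion-free if and only if $f\bmod p$ is injective — applied to $f = F-\xi R$ on $\projlimf_r W_r\Omega^j_{\frak X/\roi}$, whose source and target are $p$-torsion-free by Corollary \ref{corollary_torsion_bound}(i) — and the kernel in (v) is then canonically $\op{coker}(f)[p]$ via the Bockstein $\op{coker}(f)\xto{p}\op{coker}(f)$.

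Having established the equivalences, the proposition reduces to verifying any one of the five conditions. The most accessible is (ii): surjectivity of $R^j\nu_*(\bb Z/p^N\bb Z(j))\to R^j\nu_*(\bb Z/p^{N-1}\bb Z(j))$, equivalently surjectivity of the connecting-to-Bockstein map in (\ref{eqn_BK}). By Remark \ref{remark1}(ii) / Huber's comparison this is the usual $p$-adic vanishing cycles sheaf, which by Bloch--Kato \cite[Corol.~6.1.1]{Bloch1986} is generated by symbols $\dlog$ of units; and a symbol in $R^j\nu_*(\bb Z/p^N\bb Z(j))$ obviously lifts a symbol in $R^j\nu_*(\bb Z/p^{N-1}\bb Z(j))$ (lift each unit), which gives the surjectivity immediately — this is the argument flagged in Remark \ref{remark_BK}. \textbf{The main obstacle} I anticipate is not any single implication but the careful bookkeeping in the step (iii)$\Leftrightarrow$(iv): keeping track of which transition maps ($R$, $\xi R$, $\xi^j R$) appear after trivialising Breuil--Kisin twists, checking that passing to the colimit pro-system commutes with the relevant long exact sequences (this is where one uses that $\projlimf$ of an essentially-surjective system of torsion sheaves behaves well, cf. the Mittag-Leffler-type arguments already present in Lemma \ref{lemma_p_adic_cyces_via_witt}), and ensuring the three identified $p$-torsion objects are matched by the \emph{same} isomorphism rather than merely abstractly isomorphic — this requires chasing the snake-lemma connecting maps through Corollaries \ref{corollary2} and \ref{corollary3} rather than invoking them as black boxes.
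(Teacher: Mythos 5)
Your proposal follows the same overall architecture as the paper for (i)$\Leftrightarrow$(iii), the identification with the de Rham--Witt picture via the long exact sequence of (\ref{eqn_prelim}), and (iii)$\Leftrightarrow$(iv). The (ii)$\Leftrightarrow$(iii) sketch compresses what the paper does more explicitly via a diagram relating $R^j\nu_*Z_r$, $R^j\nu_*(Z_r/p^r)$ and the $\vartheta$ maps, but the underlying idea (bounded $p$-torsion plus Lemma \ref{lemma_p_adic_cyces_via_witt}) is the same.

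However, your (iv)$\Leftrightarrow$(v) argument has a genuine gap. You invoke the claim that for $f$ between $p$-torsion-free pro sheaves, $\op{coker}(f)$ is $p$-torsion-free iff $f\bmod p$ is injective. That ``elementary observation'' is false without injectivity of $f$: the snake lemma applied to multiplication by $p$ gives $\op{coker}(f)[p]\cong \ker(\bar f)/\bigl((\ker f)/p\bigr)$, so if $f$ has nonzero kernel (as $F-\xi R$ on $\projlimf_r W_r\Omega^j_{\frak X/\roi}$ certainly may) then $\ker(\bar f)$ can be nonzero while $\op{coker}(f)$ is torsion-free. More seriously, you are applying it to the wrong map: the map in (v) is $F-\xi R$ modulo $([\zeta_{p^r}]-1,p)$, not modulo $p$, so even the statement ``kernel in (v) equals $\op{coker}(f)[p]$'' does not follow from your reasoning. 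The paper bridges this gap in two necessary steps that your proposal is missing: first one uses the surjectivity of $F-R:W_r\Omega^j_{\frak X/\roi}\to W_{r-1}\Omega^j_{\frak X/\roi}$ (Theorem \ref{theorem_surj_on_formal_schemes}) to show that the cokernel in (iv) is unchanged after quotienting the source and target by $[\zeta_{p^r}]-1$; second, one needs that the resulting map $g$ on $\projlimf W_r\Omega^j_{\frak X/\roi}/([\zeta_{p^r}]-1)$ is \emph{injective} (Corollary \ref{corollary3}) and that its source and target are $p$-torsion-free --- but this is the $p$-torsion-freeness of the quotient $W_r\Omega^j_{\frak X/\roi}/([\zeta_{p^r}]-1)$, which requires the more delicate Lemma \ref{lemma_p_torsion_free_of_quotients}(iii) rather than the $p$-torsion-freeness of $W_r\Omega^j_{\frak X/\roi}$ itself from Corollary \ref{corollary_torsion_bound}(i). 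Only with injectivity does the Bockstein identify $\op{coker}(g)[p]$ with $\ker(\bar g)$, which is what condition (v) asserts. You flagged (iii)$\Leftrightarrow$(iv) as the delicate step, but the real missing ideas are in (iv)$\Leftrightarrow$(v).

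A minor point: your final paragraph verifies condition (ii) via Bloch--Kato, which matches Remark \ref{remark_BK} but is not part of the proof of Proposition \ref{appendix} itself; the paper's own verification in the body is via Lemma \ref{lemma_intersection_of_rou} and Proposition \ref{proposition_p_tf}.
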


\begin{remark}\label{remark_BK}
Suppose in this remark that $\frak X$ is the $p$-adic completion of a smooth $\roi$-scheme which is defined over the ring of integers of a discretely valued subfield $K\subseteq C$ such that $C=\hat{\res K}$. Adopting the notation of Remark \ref{remark1}(ii), it is known that $\res i^*R\res j_*(\bb Z/p^N\bb Z)\to \res i^*R\res j_*(\bb Z/p^{N-1}\bb Z)$ is surjective for all $N\ge1$: indeed, this follows from the fact that both sides are generated by symbols, by passing to the filtered colimit over all finite extensions of $K$ of S.~Bloch and K.~Kato's classical result \cite[Corol.~6.1.1]{Bloch1986}. Therefore the equivalent conditions of Proposition \ref{appendix} hold in this case and we deduce that Theorem \ref{main_theorem_over_C} is true without needing Lemma \ref{lemma_intersection_of_rou} and Proposition \ref{proposition_p_tf}.
\end{remark}

\begin{proof}[Proof of Proposition \ref{appendix}]
(i)$\Rightarrow$(ii): Taking the cohomology of (\ref{eqn_better}), and trivialising all twists, yields an exact sequence of pro sheaves for each $N\ge1$
\[\projlimf_{r\sub{ wrt }\xi R}W_{r-1}\Omega^{j-1}_{\frak X/\roi}/p^N\To R^j\nu_*(\bb Z/p^N\bb Z)\To\projlimf_{r\sub{ wrt }R}W_r\Omega^{j}_{\frak X/\roi}/p^N\xto{F- R}\projlimf_{r\sub{ wrt } R}W_r\Omega^{j}_{\frak X/\roi}/p^N\]
This maps to the corresponding exact sequence for $p^{N-1}$, whence the snake lemma shows that $R^j\nu_*(\bb Z/p^N\bb Z)\to R^j\nu_*(\bb Z/p^{N-1}\bb Z)$ is surjective if 
\[\ker(W_r\Omega^{j}_{\frak X/\roi}/p^N\xto{F- R}W_r\Omega^{j}_{\frak X/\roi}/p^N)\To \ker(W_r\Omega^{j}_{\frak X/\roi}/p^{N-1}\xto{F- R}W_r\Omega^{j}_{\frak X/\roi}/p^{N-1})\] is surjective; but the surjectivity of this map is a formal consequence of the sheaves $W_r\Omega^j_{\frak X/\roi}$ being $p$-torsion-free and the map $F-R:W_r\Omega^j_{\frak X/\roi}/p\to W_{r-1}\Omega^j_{\frak X/\roi}/p$ being surjective by Theorem \ref{theorem_surj_on_formal_schemes}.

(ii)$\Rightarrow$(iii): Consider the following diagram of sheaves:
\[\xymatrix{
R^j\nu_*Z_r\ar@/_/[rrd]\ar[r] & (R^j\nu_*Z_r)/p^r\ar[r] & R^j\nu_*(Z_r/p^r)\ar[d]& R^j\nu_*(\bb Z/p^r\bb Z)\ar[l]_{\vartheta_r^r}\ar[d]\\
&&R^j\nu_*(Z_r/p)&R^j\nu_*(\bb Z/p\bb Z)\ar[l]_{\vartheta_r^1}
}\]
Our hypothesis implies that the right vertical arrow is surjective. After taking $\projlimf_r$, the two maps $\vartheta$ maps become isomorphisms by Lemma \ref{lemma_p_adic_cyces_via_witt}, and the middle horizontal arrow becomes an isomorphism (since the $p$-power-torsion in each $R^j\nu_*Z_r$ is bounded by Corollary \ref{corollary_torsion_bound}(v)); it follows that $\projlimf_rR^j\nu_*Z_r\to\projlimf_rR^j\nu_*(Z_r/p)$ is surjective, i.e., that $\projlimf_r(R^{j+1}\nu_*Z_r)[p]$ vanishes.

(iii)$\Leftrightarrow$(iv): At line (\ref{eqn_prelim}) we established a homotopy fibre sequence of pro sheaves 
\[\projlimf_{r}\tau^{\le j+1}R\nu_*Z_r(j+1)\To\projlimf_{r\sub{ wrt }R}\tau^{\le j+1}\tilde{W_r\Omega}_{\frak X}\{j+1\}\xTo{F-R}\projlimf_{r\sub{ wrt }R}\tau^{\le j+1}\tilde{W_{r-1}\Omega}_{\frak X}\{j+1\}.\]
Taking its cohomology and trivialising all twists leads to an exact sequence \[\projlimf_{r\sub{ wrt }\xi R}W_r\Omega^{j}_{\frak X/\roi}\xto{F-\xi R}\projlimf_{r\sub{ wrt }\xi R}W_{r-1}\Omega^{j}_{\frak X/\roi}\To \projlimf_rR^{j+1}\nu_*Z_r\To\projlimf_{r\sub{ wrt }R}W_r\Omega^{j+1}_{\frak X/\roi}\xto{F- R}\projlimf_{r\sub{ wrt } R}W_{r-1}\Omega^{j+1}_{\frak X/\roi}\] Since each $W_r\Omega^{j+1}_{\frak X/\roi}$ is $p$-torsion-free, the $p$-torsion of the middle term is the same as the $p$-torsion of the cokernel of the map $F-\xi R$.

(iii)$\Rightarrow$(i): Modding out the homotopy fibre sequence from the start of the previous paragraph (but for $j$ rather than $j+1$) by $p^N$ yields 
\[\projlimf_{r}(\tau^{\le j}R\nu_*Z_r(j))/p^N\To\projlimf_{r\sub{ wrt }R}\tau^{\le j}\tilde{W_r\Omega}_{\frak X}\{j\}/p^N\xTo{F-R}\projlimf_{r\sub{ wrt }R}\tau^{\le j}\tilde{W_{r-1}\Omega}_{\frak X}\{j\}/p^N\] Our assumption that $\projlim_rR^{j+1}\nu_*Z_r$ is $p$-torsion-free implies that the canonical map\linebreak $\projlimf_r(\tau^{\le j}R\nu_*Z_r(j))/p^N\isoto \projlimf_{r}\tau^{\le j}R\nu_*(Z_r(j)/p^N)$ is an isomorphism of pro sheaves; the codomain of this map is moreover isomorphic to $\projlimf_{r}(\tau^{\le j}R\nu_*(\bb Z/p^N(j))$ by Lemma~\ref{lemma_p_adic_cyces_via_witt}, thereby producing the desired homotopy fibre sequence (\ref{eqn_better}).

(iv)$\Leftrightarrow$(v): We consider the commutative diagram
\[\xymatrix@C=2cm{
0\ar[r] & W_r\Omega^{j-1}_{\frak X/\roi}\ar@{->>}[d]^{F-R}\ar[r]^{[\zeta_{p^r}]-1}&W_r\Omega^{j-1}_{\frak X/\roi}\ar[r]\ar[d]^{F-\xi R} & W_r\Omega^{j-1}_{\frak X/\roi}\ar[r]\ar[d]^{F-\xi R}/[\zeta_{p^r}]-1\ar[r]&0\\
0\ar[r] &W_{r-1}\Omega^{j-1}_{\frak X/\roi} \ar[r]_{[\zeta_{p^{r-1}}]-1}&W_{r-1}\Omega^{j-1}_{\frak X/\roi}\ar[r]&W_{r-1}\Omega^{j-1}_{\frak X/\roi}/[\zeta_{p^{r-1}}]-1\ar[r]& 0\\
}\]
in which $F-R$ is surjective by Theorem \ref{theorem_surj_on_formal_schemes}. Therefore the cokernel of the middle vertical arrow is the same as the cokernel of the right vertical arrow, i.e., \[\op{Coker}(W_r\Omega^{j}_{\frak X/\roi}/[\zeta_{p^r}]-1\xto{F-\xi R}W_{r-1}\Omega^{j}_{\frak X/\roi}/[\zeta_{p^{r-1}}]-1).\] But after taking $\projlimf_{r\sub{ wrt }\xi R}$ this map becomes injective by Corollary \ref{corollary3}, and we will prove in a moment that its codomain becomes $p$-torsion-free; the mod $p$ Tor sequence therefore tells us that the $p$-torsion of its cokernel is the same as the kernel of the map mod $p$.
\end{proof}

To complete the proof of the equivalence (iv)$\Leftrightarrow$(v) in the previous proposition we need to know that $\projlimf_{r\sub{ wrt }\xi R}W_r\Omega^j_{\frak X/\roi}/[\zeta_{p^r}]-1$ is $p$-torsion-free; this is a consequence of part (iii) of the following lemma:

\begin{lemma}\label{lemma_p_torsion_free_of_quotients}
Let $r\ge1$.
\begin{enumerate}
\item The sequence \[W_r(\roi)/[\zeta_{p^r}]-1\xto{p^{r-1}}W_r(\roi)/[\zeta_{p^r}]-1\xto{p} W_r(\roi)/[\zeta_{p^r}]-1\] is exact.
\item The map $\xi R: W_r(\roi)/[\zeta_{p^r}]-1\to W_{r-1}(\roi)/[\zeta_{p^{r-1}}]-1$ kills all $p$-torsion in the domain.
\item For each $j\ge 0$, the map \[\xi R:W_{r}\Omega_{\frak X/\roi}^j/[\zeta_{p^{r}}]-1\To W_{r-1}\Omega_{\frak X/\roi}^j/[\zeta_{p^{r-1}}]-1\] kills all $p$-torsion in the domain.
\end{enumerate}
\end{lemma}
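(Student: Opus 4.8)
The plan is to establish (i) by an explicit computation inside $\bb A_\sub{inf}$, to deduce (ii) from it formally, and to reduce (iii) to (i)--(ii) by passing to an étale-local torus. It is convenient to aim, in (i), for the slightly stronger assertion proved by induction on $r$: for every $m\ge r$, the complex $W_r(\roi)/([\zeta_{p^m}]-1)\xrightarrow{p^{r-1}}W_r(\roi)/([\zeta_{p^m}]-1)\xrightarrow{p}W_r(\roi)/([\zeta_{p^m}]-1)$ is exact (part (i) being the case $m=r$, and the case $r\le m$ being what is needed for (iii) below).

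\textbf{Part (i).} Transport the statement to $\bb A_\sub{inf}$ through the isomorphism $\tilde\theta_r\colon\bb A_\sub{inf}/\tilde\xi_r\bb A_\sub{inf}\isoto W_r(\roi)$, under which $[\zeta_{p^r}]-1$ corresponds to $\mu$; thus $W_r(\roi)/([\zeta_{p^r}]-1)=\bb A_\sub{inf}/(\tilde\xi_r,\mu)$ and the claim becomes: if $a\in\bb A_\sub{inf}$ has $pa\in(\tilde\xi_r,\mu)$ then $a\in(\tilde\xi_r,\mu,p^{r-1})$ (the reverse inclusion being clear from $p^r\in([\zeta_{p^r}]-1)W_r(\roi)$). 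The starting observation is that $\tilde\xi_r=\phi^r(\xi)\cdots\phi(\xi)$ with $\theta(\phi^k(\xi))=p$ for each $k\ge1$, so $\tilde\xi_r=p^r+\xi v_r$ for some $v_r\in\bb A_\sub{inf}$, while $\mu=\xi\cdot\phi^{-1}(\mu)$. Writing $pa=\alpha\tilde\xi_r+\beta\mu$ and substituting, one finds $p(a-\alpha p^{r-1})=\xi\big(\alpha v_r+\beta\phi^{-1}(\mu)\big)$; since $\xi$ reduces to a nonzero element of the domain $\roi^\flat=\bb A_\sub{inf}/p$ it is a nonzerodivisor modulo $p$, so cancelling $p$ (legitimate as $\bb A_\sub{inf}$ is $p$-torsion free) yields $a\equiv\xi\gamma\pmod{p^{r-1}}$ for some $\gamma\in\bb A_\sub{inf}$. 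The remaining point -- that $\xi\gamma$ already lies in $(\tilde\xi_r,\mu,p^{r-1})$ -- is the crux, and I expect it to be the main obstacle. I would handle it either by the induction described above, peeling off the outermost factor $\phi^r(\xi)$ via the short exact sequence $0\to\roi\xrightarrow{V^{r-1}}W_r(\roi)\xrightarrow{R}W_{r-1}(\roi)\to0$ (on which $[\zeta_{p^m}]-1$ acts on the $\roi$-summand through $F^{r-1}([\zeta_{p^m}]-1)=\zeta_{p^{m-r+1}}-1$, and on which the quotient restricts to $W_{r-1}(\roi)/([\zeta_{p^m}]-1)$ since $R$ fixes $[\zeta_{p^m}]-1$), or, equivalently and more concretely, by reducing everything modulo $p$ to a computation in the valuation ring $\roi^\flat$, where $\mu$ and $\tilde\xi_r$ map to explicit powers of $s:=\ep^{1/p}-1$ (namely $s^p$ and $s^{p^{r+1}-p}$), so that ideal membership is read off from valuations, and then lifting $p$-adically one level at a time. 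The cited facts $p^r\in([\zeta_{p^r}]-1)W_r(\roi)$ and $([\zeta_{p^r}]-1)^{p^{r-1}}\in pW_r(\roi)$ (and their analogues at the intermediate levels) are precisely what make the bookkeeping close up; the base case $r=1$ is the trivial assertion that $p=0$ in $\roi/(\zeta_{p^m}-1)$.

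\textbf{Part (ii)} is then formal. If $x\in W_r(\roi)/([\zeta_{p^r}]-1)$ is killed by $p$, then $x=p^{r-1}y$ by the exactness in (i), so $(\xi R)(x)=\tilde\theta_{r-1}(\xi)\,R(p^{r-1}y)=p^{r-1}\cdot\big(\tilde\theta_{r-1}(\xi)R(y)\big)$ lies in $p^{r-1}\cdot\big(W_{r-1}(\roi)/([\zeta_{p^{r-1}}]-1)\big)$, which vanishes since $p^{r-1}\in([\zeta_{p^{r-1}}]-1)W_{r-1}(\roi)$; the case $r=1$ is trivial as $W_0(\roi)=0$. (There is no separate issue with higher $p$-power torsion, since $p^r$ already annihilates $W_r(\roi)/([\zeta_{p^r}]-1)$.)

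\textbf{Part (iii).} Killing of $p$-torsion may be checked on an étale cover of $\frak X$; since $\frak X$ is étale-locally the $p$-adic completion of a Laurent polynomial algebra $\roi\pid{T_1^{\pm1},\dots,T_d^{\pm1}}$ and $W_r\Omega^\bullet_{-/\roi}$ commutes with étale base change \cite[Lem.~10.8]{Bhatt_Morrow_Scholze2}, one reduces to that ring -- here also using that, étale maps being flat, base change commutes with forming $(\,\cdot\,)/([\zeta_{p^r}]-1)$ and with extracting $p$-torsion. For the torus, the explicit description of $W_r\Omega^j_{\frak X/\roi}$ \cite[Thm.~10.12]{Bhatt_Morrow_Scholze2} presents it as a direct sum of weight pieces, each of which is, as a $W_r(\roi)$-module through restriction, a copy of $W_s(\roi)$ for some $1\le s\le r$, with $R$ and $F$ acting through the ordinary de Rham--Witt operators on each summand. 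Since a direct sum of $p$-torsion-free modules is $p$-torsion free and the splitting is compatible with $R$, it suffices to check on each summand that $\tilde\theta_{r-1}(\xi)\cdot R$ kills the $p$-torsion of $W_s(\roi)/([\zeta_{p^r}]-1)$. For $s=r$ this is (essentially) part (ii); for $s<r$ the relevant map is multiplication by the image of $\tilde\theta_{r-1}(\xi)$ on $W_s(\roi)/([\zeta_{p^r}]-1)$, and one argues as in (i)--(ii) using the generalized exactness from (i) (with level $s\le m=r$) together with the divisibility $[\zeta_{p^s}]-1=([\zeta_{p^r}]-1)\big([\zeta_{p^r}]^{p^{r-s}-1}+\cdots+1\big)$ in $W_s(\roi)$ and $p^s\in([\zeta_{p^s}]-1)W_s(\roi)$, which together show $p^{s-1}\cdot\tilde\theta_{r-1}(\xi)\in([\zeta_{p^{r-1}}]-1)W_s(\roi)$. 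The only substantive work beyond Part (i) is keeping track of the index $s$ and of the two distinct elements $[\zeta_{p^r}]-1$ and $[\zeta_{p^{r-1}}]-1$ across the weight decomposition.
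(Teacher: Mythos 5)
Your Part (ii) is fine and matches the paper. Parts (i) and (iii) have real issues, which I address in turn.

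\textbf{Part (i).} You correctly identify the crux but do not resolve it, and the fallbacks you sketch (inducting on $r$ through the $V^{r-1}$-sequence, or working valuatively in $\roi^\flat$ and lifting) are not carried out; as written, (i) has a genuine gap. The observation you are missing is stronger than the one you use: you write $\tilde\xi_r=p^r+\xi v_r$, i.e.\ $\tilde\xi_r\equiv p^r$ modulo $\ker\theta=(\xi)$, which only records $\theta(\tilde\xi_r)=p^r$ and is too weak. What is actually true is $\tilde\xi_r\equiv p^r$ \emph{modulo $\mu$}: for $k\ge 1$ one has $\phi^k(\xi)=1+[\ep^{p^{k-1}}]+\cdots+[\ep^{p^{k-1}}]^{p-1}$ and $[\ep^{p^{k-1}}]=[\ep]^{p^{k-1}}\equiv 1\pmod\mu$, so $\phi^k(\xi)\equiv p\pmod\mu$ and hence $\tilde\xi_r=\phi(\xi)\cdots\phi^r(\xi)\equiv p^r\pmod\mu$. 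With this, $(\tilde\xi_r,\mu)=(p^r,\mu)$ as ideals of $\bb A_\sub{inf}$, so $W_r(\roi)/([\zeta_{p^r}]-1)\cong\bb A_\sub{inf}/(\mu,p^r)$; since $\mu,p$ is a regular sequence, $\bb A_\sub{inf}/\mu$ is $p$-torsion-free and the $p$-torsion of $\bb A_\sub{inf}/(\mu,p^r)$ is exactly the multiples of $p^{r-1}$. This single congruence collapses the whole of (i) into two lines, and is the ingredient that renders your ``$a\equiv\xi\gamma\pmod{p^{r-1}}$, now what?'' difficulty moot; without it, the step you flag as the main obstacle really is unresolved, because $\xi\gamma$ for general $\gamma$ does \emph{not} lie in $(\tilde\xi_r,\mu,p^{r-1})$.

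\textbf{Part (iii).} The reduction to the Laurent polynomial torus via Elkik and \'etale base change is correct, and so is passing to the weight decomposition of \cite[\S 10.4]{Bhatt_Morrow_Scholze2}. However, the pieces are $V^{u}W_{r-u}(\roi)$ as $W_r(\roi)$-submodules, i.e.\ $F^{u}_*W_{r-u}(\roi)$, and the clean way to handle them is to use the isomorphism
\[V^u:\;W_{r-u}(\roi)/([\zeta_{p^{r-u}}]-1)\;\Isoto\;V^uW_{r-u}(\roi)/([\zeta_{p^r}]-1)\,V^uW_{r-u}(\roi),\]
which holds because $([\zeta_{p^r}]-1)V^u(y)=V^u(F^u([\zeta_{p^r}]-1)\,y)=V^u(([\zeta_{p^{r-u}}]-1)y)$ and $V^u$ is injective. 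This isomorphism is compatible with $R$ and reduces the claim on each weight piece to (ii) applied with $r-u$ in place of $r$. Your version instead tries to keep $W_s(\roi)/([\zeta_{p^r}]-1)$ with the ``wrong'' $r$, forcing the ad hoc ``generalized exactness at level $m$'' and the divisibility bookkeeping with $[\zeta_{p^s}]-1$; these extra gymnastics are avoidable and, as sketched, not tight (you describe the induced map as multiplication by $\tilde\theta_{r-1}(\xi)$ on $W_s(\roi)/([\zeta_{p^r}]-1)$, but $R$ still acts nontrivially on the sub-Witt vectors under the $V^u$-identification). I would rewrite (iii) along the $V^u$-conjugation route, which the paper uses and which makes the reduction to (ii) immediate.
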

\begin{proof}
(i) Since $\roi^\flat$ is a perfect ring of characteristic $p$, the $p$-torsion in $W_r(\roi^\flat)=\bb A_\sub{inf}/p^r$ is given by the multiples of $p^{r-1}$; since $\mu,p$ is a regular sequence $\bb A_\sub{inf}$, it easily follows that the same is true in the ring $\bb A_\sub{inf}/\mu$.  But $\tilde\theta_r$ induces an isomorphism \[W_r(A^\flat)/\mu=\bb A_\sub{inf}/(\tilde\xi_r,\mu)\isoto W_r(\roi)/[\zeta_{p^r}]-1,\] and so the $p$-torsion in the ring $W_r(\roi)/[\zeta_{p^r}]-1$ is also given by multiples of $p^{r-1}$, as required.

(ii) The $p$-torsion in $W_{r}(A)/[\zeta_{p^{r}}]-1$ is given by multiples of $p^{r-1}$ by (i), whose image under $\xi R$ is zero in $W_{r-1}(A)/[\zeta_{p^{r-1}}]-1$ since $p^{r-1}=0$ in this latter ring (as already mentioned in Corollary \ref{corollary_torsion_bound})

(iii) R.~Elkik's results \cite{Elkik1973} imply that $\frak X$ admits an open cover by the formal spectra of rings of the form $\hat R$ (the hat denotes $p$-adic completion), where $R$ is a smooth $\roi$-algebra which receives an \'etale map from a Laurent polynomial algebra $\roi[T_1^{\pm1},\dots,T_d^{\pm1}]$. Since the de Rham--Witt groups base change well under \'etale morphisms \cite[Lem.~10.8]{Bhatt_Morrow_Scholze2}, the problem therefore reduces to showing that the map \[\xi R:W_{r+1}\Omega_{R/\roi}^j/[\zeta_{p^{r+1}}]-1\To W_r\Omega_{R/\roi}^j/[\zeta_{p^r}]-1\] kills all $p$-torsion in the domain, where $R:=\roi[T_1^{\pm1},\dots,T_d^{\pm1}]$ (and we have replaced $r$ by $r+1$ to slightly simplify the following notation).

According to \cite[\S10.4]{Bhatt_Morrow_Scholze2}, there exists an isomorphism of $W_r(\roi)$-modules \[\bigoplus_{a:\{1,\dots,d\}\to\bb Z[\tfrac1p]}\bigoplus_{(I_0,\dots,I_n)\in P_a}V^{u(a)}W_{r-u(a)}(\roi)\Isoto W_r\Omega^j_{R/\roi}\] (where $u(a):=0$ if $a$ is $\bb Z$-valued and $:=-\min_{i=1,\dots,d}\nu_p(a(i))$ else; $V^{u(a)}W_{r-u(a)}(\roi):=0$ if $u(a)\ge r$) given by the sum of certain explicit maps of $W_r(\roi)$-modules \[e(-,a,I_0,\dots,I_n):V^{u(a)}W_{r-u(a)}(\roi)\to W_r\Omega^j_{R/\roi}.\] It is trivial to check from the definition of these maps (which we do not repeat here), that they are compatible with the restriction maps, i.e., $R(e(x,a,I_0,\dots,I_n))=e(Rx,a,I_0,\dots,I_n)$ for all $x\in V^{u(a)}W_{r-u(a)}(\roi)$. This reduces the proof to showing that \[\xi R:V^uW_{r-u}(\roi)/[\zeta_{p^{r}}]-1\To V^uW_{r-1-u}(\roi)/[\zeta_{p^{r-1}}]-1\] kills all $p$-torsion in the domain, for each $u=0,\dots,r-1$. But \[V^u:W_{r-u}(\roi)/[\zeta_{p^{r-u}}]-1\isoto V^uW_{r-u}(\roi)/[\zeta_{p^{r}}]-1\] (and similarly at level $r-1-u$ in place of $r-u$), which reduces the desired result to (ii).
\end{proof}

The previous lemma completes the proof of Proposition \ref{appendix}. To prove Theorem \ref{main_theorem_over_C}, we must now verify one of the conditions of the proposition. In fact, we could simply appeal to \cite[Rem.~10.2]{Bhatt_Morrow_Scholze3}, where the $q$-de Rham complex is used to verify condition (ii). However, we prefer instead to give a proof via de Rham--Witt sheaves in the spirit of this paper; we will require the following lemma, reminiscent of results which already appeared in \cite[\S9.2]{Bhatt_Morrow_Scholze2}.

\begin{lemma}\label{lemma_intersection_of_rou}
For each $r\ge 0$, $j\ge 1$, the canonical map \[W_r\Omega^j_{\frak X/\roi}/([\zeta_{p^r}]-1,p)\To \projlim_{s\ge r}W_r\Omega^j_{\frak X/\roi}/(\tfrac{[\zeta_{p^r}]-1}{[\zeta_{p^s}]-1},p)\] is injective.
\end{lemma}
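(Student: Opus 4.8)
The plan is to run the same reduction used in the proof of Lemma~\ref{lemma_p_torsion_free_of_quotients}(iii), turning the statement into a computation about ideals of $\roi^\flat$. Injectivity of a map of sheaves on $\frak X_\sub{\'et}$ is local, so Elkik's theorem together with \'etale base change for de Rham--Witt groups \cite[Lem.~10.8]{Bhatt_Morrow_Scholze2} reduces us to the case $R=\roi[T_1^{\pm1},\dots,T_d^{\pm1}]$; that is, it suffices to show the map \[W_r\Omega^j_{R/\roi}/([\zeta_{p^r}]-1,p)\To\projlim_{s\ge r}W_r\Omega^j_{R/\roi}/\big(\tfrac{[\zeta_{p^r}]-1}{[\zeta_{p^s}]-1},p\big)\] is injective. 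I would then invoke the explicit $W_r(\roi)$-linear decomposition $W_r\Omega^j_{R/\roi}\cong\bigoplus_{a,(I_0,\dots,I_n)}V^{u(a)}W_{r-u(a)}(\roi)$ of \cite[\S10.4]{Bhatt_Morrow_Scholze2}: every map in sight is multiplication by a scalar of $W_r(\roi)$, so the decomposition is preserved by all the relevant quotients, and injectivity of the comparison map may be checked on each summand. Since $F^u\colon W_r(\roi)\to W_{r-u}(\roi)$ is surjective, $M:=V^uW_{r-u}(\roi)$ is a \emph{cyclic} $W_r(\roi)$-module (generated by $V^u(1)$), so $M/p$ is a cyclic module over $A:=W_r(\roi)/p$, which $\tilde\theta_r$ identifies with $\roi^\flat/\overline{\tilde\xi_r}$, a quotient of the perfect rank-one valuation ring $\roi^\flat$.

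Passing to $\roi^\flat$: write $\bar a$ for the reduction of $a\in\bb A_\sub{inf}$, so that $\overline{[\zeta_{p^r}]-1}=\bar\mu$ and, since $\tfrac{[\zeta_{p^r}]-1}{[\zeta_{p^s}]-1}=\tilde\theta_r(\xi_{s-r})$ with $\xi_k=\mu/\phi^{-k}(\mu)$, also $\overline{\tfrac{[\zeta_{p^r}]-1}{[\zeta_{p^s}]-1}}=\overline{\xi_{s-r}}$. The statement to prove becomes: for every cyclic $A$-module $M$, \[\bigcap_{k\ge0}\overline{\xi_k}\,M=\bar\mu\,M.\] The inclusion $\supseteq$ is immediate. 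Writing $M=\roi^\flat/(d)$ (a quotient of $A$, hence $\overline{\tilde\xi_r}\in(d)$), this reduces, by a short case analysis on $v(d)$, to the ideal identity $\bigcap_{k\ge0}(\overline{\xi_k})=(\bar\mu)$ in $\roi^\flat$: if $v(d)\ge v(\bar\mu)$ then $(\overline{\xi_k})\supseteq(d)$ for all $k$, so $\bigcap_k\overline{\xi_k}M=\big(\bigcap_k(\overline{\xi_k})\big)/(d)=(\bar\mu)/(d)=\bar\mu M$; and if $v(d)<v(\bar\mu)$ then $\bar\mu\in(d)$, so $\bar\mu M=0$, while $\overline{\xi_k}\in(d)$ for all large $k$, so $\overline{\xi_k}M=0$ and the intersection vanishes.

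The heart of the matter is thus the identity $\bigcap_{k\ge0}(\overline{\xi_k})=(\bar\mu)$ in $\roi^\flat$. There $\phi$ is the $p$-th power map, so $\overline{\xi_k}=\bar\mu/\bar\mu^{1/p^k}$ and hence $v(\overline{\xi_k})=(1-p^{-k})\,v(\bar\mu)$, where $v(\bar\mu)=v(\ep-1)>0$; these valuations strictly increase to, but never reach, $v(\bar\mu)$. Because $\roi^\flat$ has rank one, any $x$ with $v(x)<v(\bar\mu)$ satisfies $v(x)<v(\overline{\xi_k})$ for $k$ large, hence $x\notin(\overline{\xi_k})$; therefore $\bigcap_k(\overline{\xi_k})=\{x:v(x)\ge v(\bar\mu)\}=(\bar\mu)$, which is what is needed. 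I expect the main obstacle to be simply organising the chain of reductions correctly — in particular, checking that the de Rham--Witt decomposition of \cite[\S10.4]{Bhatt_Morrow_Scholze2} descends modulo $p$ to a direct sum of cyclic $\roi^\flat/\overline{\tilde\xi_r}$-modules and that the inverse limit over $s$ interacts well with that direct sum, so that the question reduces to the totally ordered ideal lattice of $\roi^\flat$; once that framework is in place, the valuation computation above is routine.
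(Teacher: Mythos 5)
Your overall strategy — reduce via étale localization and the explicit Langer--Zink decomposition to a statement about cyclic $\roi^\flat$-modules, then verify the ideal identity $\bigcap_k(\overline{\xi_k})=(\bar\mu)$ by a valuation computation in the rank-one valuation ring $\roi^\flat$ — matches the paper's, and the valuation computation itself is correct and identical (up to a Frobenius twist by $\theta_i$ versus $\tilde\theta_r$, which is harmless).

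The gap is in the first reduction. You copy the Elkik/\'etale-base-change reduction from the proof of Lemma~\ref{lemma_p_torsion_free_of_quotients}(iii), but that lemma has no inverse limit in its statement, so flatness of the base change $W_r(\roi\pid{\ul T^{\pm1}})\to W_r(\hat R)$ suffices there. Here the target is $\projlim_{s\ge r}(\cdots)$, and the reduction requires the natural map
\[\bigl(\projlim_s N_s\bigr)\otimes_A B\To \projlim_s\bigl(N_s\otimes_A B\bigr)\]
to be injective, where $A=W_r(\roi\pid{\ul T^{\pm1}})/p$ and $B=W_r(\hat R)/p$. Flatness of $B$ alone does \emph{not} give this (one needs $B$ to be, e.g., finite projective or at least Mittag--Leffler over $A$), and Elkik's theorem only produces $R$ \'etale but not finite over the Laurent ring, so $B$ is not a finite $A$-module. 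This is exactly why the paper uses Kedlaya's theorem instead of Elkik's at this step: Kedlaya produces a cover by algebras \emph{finite} \'etale over $\roi\pid{\ul T^{\pm1}}$, so that $W_r(\roi\pid{\ul T^{\pm1}})\to W_r(R)$ is finite \'etale (van der Kallen), hence $B$ is finite projective over $A$ and tensoring with it commutes with arbitrary inverse limits. The paper even flags this explicitly in a parenthetical at the end of its proof. Your write-up also registers some unease about "the inverse limit over $s$ interacting well" but locates it at the direct-sum decomposition (which is in fact harmless for an injectivity claim, since $\projlim_s\bigoplus_\alpha\into\prod_\alpha\projlim_s$); the real problem is the commutation of $\projlim_s$ with the \'etale base change, and to fix it you should replace Elkik by Kedlaya and base change along the resulting finite \'etale map.
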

\begin{proof}
Thanks to a theorem of K.~Kedlaya  \cite{Kedlaya2005}, $\frak X$ admits a cover by the formal spectra of formally smooth $\roi$-algebras which receive a finite \'etale map from $\roi\pid{\ul T^{\pm1}}:=\roi\pid{T_1^{\pm1},\dots,T_d^{1\pm1}}$; see \cite[Lem.~4.9]{Bhatt2016} for further details. It is therefore enough to let $R$ be an arbitrary finite \'etale $\roi\pid{\ul T^{\pm1}}$-algebra and to show that \begin{equation}W_r\Omega^j_{R/\roi}/([\zeta_{p^r}]-1,p)\To \projlim_{s\ge r}W_r\Omega^j_{R/\roi}/(\tfrac{[\zeta_{p^r}]-1}{[\zeta_{p^s}]-1},p)\label{eqn_inj}\end{equation} is injective.

Using the presentation of the de Rham--Witt groups of a Laurent polynomial algebra which has already appeared in the proof of Lemma \ref{lemma_p_torsion_free_of_quotients}(iii), we see that the desired injectivity holds in the case of $\roi\pid{\ul T^{\pm1}}$ if and only if \[M/([\zeta_{p^r}]-1,p)\To \projlim_{s\ge r}M/(\tfrac{[\zeta_{p^r}]-1}{[\zeta_{p^s}]-1},p)\] is injective for each of the $W_r(\roi)$-modules $M=V^{r-i}W_{i}(\roi)$ for $i=1,\dots,r$. Recalling that $V^{r-i}W_{i}(\roi)$ is the same as the $W_r(\roi)$-module $F^{r-i}_*W_i(\roi)$, this can be rewritten \[W_i(\roi)/([\zeta_{p^i}]-1,p)\To \projlim_{s\ge i}W_i(\roi)/(\tfrac{[\zeta_{p^i}]-1}{[\zeta_{p^s}]-1},p).\] Next we use the isomorphism $\theta_i:\roi^\flat/\tfrac{\ep-1}{\ep^{1/p^i}-1}\isoto W_i(\roi)/p$, which sends $\ep^{1/p^m}-1$ to $[\zeta_{p^m}]-1$ for all $m\ge1$, to finally rewrite this as \[\roi^\flat/\ep^{1/p^i}-1\To\projlim_{s\ge i}\roi^\flat/\tfrac{\ep^{1/p^i}-1}{\ep^{1/p^s}-1},\] which is indeed injective by consideration of valuations.

We may now return to our general finite \'etale $\roi\pid{\ul T^{\pm1}}$-algebra $R$. By base changing the just-proved injectivity of \[W_r\Omega^j_{\roi\pid{\ul T^{\pm1}}/\roi}/([\zeta_{p^r}]-1,p)\To \projlim_{s\ge r}W_r\Omega^j_{\roi\pid{\ul T^{\pm1}}/\roi}/(\tfrac{[\zeta_{p^r}]-1}{[\zeta_{p^s}]-1},p)\] along the finite \'etale map $\roi\pid{\ul T^{\pm1}}\to W_r(R)$ (this map is finite \'etale by W.~van der Kallen \cite[\S2]{vanderKallen1986}), and using \'etale base change of the de Rham--Witt complex \cite[Lem.~10.8]{Bhatt_Morrow_Scholze2}, we exactly obtain the desired injectivity of (\ref{eqn_inj}) (note that the base change can be exchanged with the $\projlim_s$ since $\roi\pid{\ul T^{\pm1}}\to W_r(R)$ is finite \'etale).
\end{proof}

Now we verify condition (iii) of Proposition \ref{appendix}:

\begin{proposition}\label{proposition_p_tf}
For each $j\ge 0$, the pro sheaf $\projlim_r R^{j+1}\nu_*Z_r$ is $p$-torsion-free.
\end{proposition}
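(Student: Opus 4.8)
The plan is to verify condition (iii) of Proposition \ref{appendix} by instead checking the equivalent condition (v): that
\[
F-\xi R\colon \projlimf_{r\sub{ wrt }\xi R}W_r\Omega^{j}_{\frak X/\roi}/([\zeta_{p^r}]-1,p)\To \projlimf_{r\sub{ wrt }\xi R}W_{r-1}\Omega^{j}_{\frak X/\roi}/([\zeta_{p^{r-1}}]-1,p)
\]
is injective. Once this is known, the Proposition follows formally from Proposition \ref{appendix}.

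To check (v) I would first reduce, exactly as in the proofs of Lemma \ref{lemma_intersection_of_rou} and Lemma \ref{lemma_p_torsion_free_of_quotients}(iii), to the case $\frak X=\Spf R$ with $R$ finite \'etale over $\roi\pid{\ul T^{\pm1}}$, using Kedlaya's theorem and the \'etale base-change property \cite[Lem.~10.8]{Bhatt_Morrow_Scholze2} of the relative de Rham--Witt complex. I would then bring in the explicit Langer--Zink description \cite[\S10.4]{Bhatt_Morrow_Scholze2} of $W_r\Omega^j_{\roi\pid{\ul T^{\pm1}}/\roi}$ as a sum of pieces $V^{u}W_{m}(\roi)$, on each of which the Restriction acts diagonally, together with the identifications $V^u\colon W_{m}(\roi)/([\zeta_{p^m}]-1)\isoto V^uW_m(\roi)/([\zeta_{p^r}]-1)$ from Lemma \ref{lemma_p_torsion_free_of_quotients}; this reduces the essential content to understanding $F-\xi R$ on the pro system formed by the rings $W_m(\roi)/([\zeta_{p^m}]-1,p)$.

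The heart of the matter is a valuation computation in that rank-one case. The isomorphism $\tilde\theta_m\colon \bb A_\sub{inf}/\tilde\xi_m\isoto W_m(\roi)$, reduced modulo $\mu$ and then modulo $p$ (and using $\tilde\xi_m\in(\mu)$), identifies $W_m(\roi)/([\zeta_{p^m}]-1,p)$ with $\roi^\flat/(\epsilon-1)$; under this identification, \cite[Lem.~3.4]{Bhatt_Morrow_Scholze2} shows that $F$ becomes the identity and $\xi R$ becomes the twisted inverse Frobenius $x\mapsto\bar\xi\,x^{1/p}$, where $\bar\xi\in\roi^\flat$ is the image of $\xi$ and satisfies $v(\bar\xi)=(1-\tfrac1p)v(\epsilon-1)$. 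Since $(\epsilon-1)\roi^\flat=\{y\in\roi^\flat:v(y)\ge v(\epsilon-1)\}$, any $x\in\roi^\flat$ with $v(x)<v(\epsilon-1)$ satisfies $v(\bar\xi\,x^{1/p})=(1-\tfrac1p)v(\epsilon-1)+\tfrac1p v(x)>v(x)$, hence $v(x-\bar\xi\,x^{1/p})=v(x)<v(\epsilon-1)$; so $x-\bar\xi\,x^{1/p}\in(\epsilon-1)\roi^\flat$ forces $x\in(\epsilon-1)\roi^\flat$. Thus $F-\xi R$ has vanishing kernel in the rank-one case.

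The delicate point --- and the place where Lemma \ref{lemma_intersection_of_rou} is used --- is that the Frobenius $F$ does not respect the Langer--Zink decomposition, but only shifts its indexing via $a\mapsto pa$ (and the block indexed by $a$ is present only at levels $r>u(a)$). Consequently the kernel of $F-\xi R$ on $W_r\Omega^j_{\frak X/\roi}/([\zeta_{p^r}]-1,p)$ does not split off the rank-one kernels, and one must run the argument in the inverse limit over $r$; Lemma \ref{lemma_intersection_of_rou}, which embeds $W_r\Omega^j_{\frak X/\roi}/([\zeta_{p^r}]-1,p)$ into $\projlim_{s\ge r}W_r\Omega^j_{\frak X/\roi}/(\tfrac{[\zeta_{p^r}]-1}{[\zeta_{p^s}]-1},p)$, provides enough room to reduce the global injectivity to the valuation estimate above, applied term by term and compatibly with all restriction maps. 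Organising this bookkeeping --- and in particular verifying that the identification of $\xi R$ with a twisted inverse Frobenius is compatible with the $\xi R$-transition maps of the pro system --- is the main obstacle; everything else is formal.
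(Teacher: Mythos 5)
The proposal diverges fundamentally from the paper's proof at the crucial step and leaves a genuine gap that it acknowledges but does not close.

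The paper does \emph{not} verify condition (v) of Proposition \ref{appendix} by analysing $F-\xi R$ on the Langer--Zink weight pieces. Instead, it observes that the $p$-torsion of the pro sheaf in condition (iii) is a \emph{quotient of the constant pro sheaf} $R^j\nu_*(\bb Z/p\bb Z)$; hence the pro system of kernels $\ker_r(F-\xi R)$ is eventually constant, so for a fixed $r$ its image in level $r$ equals the \emph{intersection} $\bigcap_{s\ge r}\Im\big((\xi R)^{s-r}\big)$. Since $(\xi R)^{s-r}=\tfrac{[\zeta_{p^r}]-1}{[\zeta_{p^s}]-1}R^{s-r}$, this is contained in $\bigcap_s \tfrac{[\zeta_{p^r}]-1}{[\zeta_{p^s}]-1}W_r\Omega^j/([\zeta_{p^r}]-1,p)$, which vanishes by Lemma \ref{lemma_intersection_of_rou}. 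That constancy — not a valuation estimate — is the engine of the argument, and your proposal never invokes it.

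Your rank-one valuation computation on $\roi^\flat/(\epsilon-1)$ is correct as far as it goes, but you yourself identify the obstruction: modulo $p$ the Frobenius does not preserve the Langer--Zink weight decomposition (it shifts $a\mapsto pa$ and annihilates the fractional-weight blocks), so the kernel of $F-\xi R$ does not decompose into rank-one kernels. What you offer past that point — that Lemma \ref{lemma_intersection_of_rou} ``provides enough room'' and that the rest is ``bookkeeping'' — is not an argument. Concretely, for a class supported in weight $p^kc'$ with $c'$ not divisible by $p$, the kernel relation only tells you $\xi R$ kills it after $k+1$ iterations, and $k$ is unbounded in the (completed) decomposition; nothing in your write-up controls this, and it is precisely the uniformity issue that the paper's constancy trick is designed to avoid. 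You also use Lemma \ref{lemma_intersection_of_rou} in a role (an a priori injectivity of a completion map) different from, and weaker than, its actual role in the paper (identifying the intersection $\bigcap_s \tfrac{[\zeta_{p^r}]-1}{[\zeta_{p^s}]-1}(-)=0$). In short, the reduction to finite \'etale over $\roi\langle\ul T^{\pm1}\rangle$ and the rank-one estimate are sound ingredients, but the combinatorial heart of the argument — how to pass from the rank-one statement to the full pro system in the presence of the Frobenius weight shift — is missing, and the proposal does not recover the idea (constancy of the kernel pro system) that the paper uses to sidestep it.
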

\begin{proof}
Since $\projlimf_r(R^{j+1}\nu_*Z_r)[p]$ is a quotient of $\projlimf_r R^j\nu_*(Z_r/p)\cong R^j\nu_*(\bb Z/p\bb Z)$, the comparison between (iii) and (v) in Proposition \ref{appendix} gives us an exact sequence of pro sheaves \begin{equation}R^j\nu_*(\bb Z/p\bb Z)\To \projlimf_{r\sub{ wrt }\xi R}W_r\Omega^{j}_{\frak X/\roi}/([\zeta_{p^r}]-1,p)\xto{F-\xi R}\projlimf_{r\sub{ wrt }\xi R}W_{r-1}\Omega^{j}_{\frak X/\roi}/([\zeta_{p^{r-1}}]-1,p).\label{eqn_p_tf}\end{equation} In the rest of this proof we adopt the notation \[\ker_r(F-\xi R):=W_r\Omega^{j}_{\frak X/\roi}/([\zeta_{p^r}]-1,p)\xto{F-\xi R}W_{r-1}\Omega^{j}_{\frak X/\roi}/([\zeta_{p^{r-1}}]-1,p).\] Our goal is to prove that $\projlimf_{r\sub{ wrt }R}\ker_r(F-\xi R)=0$, i.e., for any fixed $r\ge 1$ we must find $r'\ge r$ such that the transition map $(\xi R)^{r'-r}:\ker_{r'}(F-\xi R)\to \ker_r(F-\xi R)$ is zero.

The fact that the left pro sheaf in sequence (\ref{eqn_p_tf}) is constant concretely implies the following: given our fixed $r\ge 1$, there exists $r'\ge r$ such that the image of \[(\xi R)^{s-r}: \ker_s(F-\xi R)\To\ker_r(F-\xi R)\] is the same for every $s\ge r'$. Therefore the image of $(\xi R)^{r'-r}:\ker_{r'}(F-\xi R)\to \ker_r(F-\xi R)$ is equal to $\bigcap_{s\ge r} \Im((\xi R)^{s-r}: \ker_s(F-\xi R)\To\ker_r(F-\xi R))$ and so we need to show that this intersection is zero. Noting that $(\xi R)^{s-r}=\tfrac{[\zeta_{p^r}]-1}{[\zeta_{p^s}]-1}R^{s-r}$, it is therefore enough to check that no non-zero section of $W_r\Omega^{j}_{\frak X/\roi}/([\zeta_{p^r}]-1,p)$ is divisible by $\tfrac{[\zeta_{p^r}]-1}{[\zeta_{p^s}]-1}$ for all $s\ge r$; this is exactly Lemma \ref{lemma_intersection_of_rou}.
\end{proof}

Thanks to Proposition \ref{proposition_p_tf}, we now know that the equivalent conditions of  Proposition \ref{appendix} are all true: in particular, this completes the proof of Theorem \ref{main_theorem_over_C} and of $p$-torsion-freeness of $p$-adic vanishing cycles (Proposition \ref{appendix}(ii)).

\subsection{An extension of $W\Omega_{\frak X/\roi}^j$}\label{subsection_extension}
Here we use Theorem \ref{main_theorem_over_C} to construct an extension of the de Rham--Witt sheaf $W\Omega^j_{\frak X/\roi}$ whose Frobenius-fixed points are precisely $p$-adic vanishing cycles. 

\begin{definition}
Set \[\overline{W\Omega}^j_{\frak X/\roi}:=H^j(\op{Rlim}_{r\sub{ wrt }R}\tau^{\le j}\tilde{W_r\Omega_{\frak X}}\{j\}),\] which is an \'etale sheaf on $\frak X$ equipped with an operator $F:\overline{W\Omega}^j_{\frak X/\roi}\to \overline{W\Omega}^j_{\frak X/\roi}$ induced by the Frobenius maps $F:\tau^{\le j}\tilde{W_r\Omega_{\frak X}}\{j\}\to \tau^{\le j}\tilde{W_{r-1}\Omega_{\frak X}}\{j\}$.
\end{definition}

\begin{theorem}\label{theorem_extension}
\begin{enumerate}
\item For each $j\ge0$ there is a natural exact sequence \[0\To\limone_{r\sub{ wrt }R}W_r\Omega_{\frak X/\roi}^{j-1}\{1\}\To \res{W\Omega}^j_{\frak X/\roi}\To W\Omega_{\frak X/\roi}^j\To 0\] compatible with $F$.
\item For each $N,j\ge 0$ the map $\iota:R^j\nu_*(\bb Z/p^N\bb Z(j))\to W\Omega^j_{\frak X/\roi}/p^N$ lifts to $\iota:R^j\nu_*(\bb Z/p^N\bb Z(j))\to \overline{W\Omega}^j_{\frak X/\roi}/p^N$ and fits into an exact sequence of \'etale sheaves on $\frak X$ \[0\To R^j\nu_*(\bb Z/p^N\bb Z(j))\xTo{\iota} \overline{W\Omega}^j_{\frak X/\roi}/p^N\xto{1-F}\overline{W\Omega}^j_{\frak X/\roi}/p^N\To 0.\]
\end{enumerate}
\end{theorem}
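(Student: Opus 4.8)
The plan is to obtain both statements by applying the homotopy limit $\op{Rlim}_{r\,\sub{ wrt }R}$ to the data of Theorems \ref{theorem_p-adic_Cartier} and \ref{main_theorem_over_C}, and then reading off cohomology sheaves around degree $j$.

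For (i), I would apply the $\limone$--$\lim$ (Milnor) exact sequence to $\op{Rlim}_{r\,\sub{ wrt }R}\tau^{\le j}\tilde{W_r\Omega}_{\frak X}\{j\}$. Since each $\tau^{\le j}\tilde{W_r\Omega}_{\frak X}\{j\}$ is supported in degrees $\le j$, this reads $0\to\limone_rH^{j-1}(\tilde{W_r\Omega}_{\frak X}\{j\})\to\overline{W\Omega}^j_{\frak X/\roi}\to\projlim_rH^j(\tilde{W_r\Omega}_{\frak X}\{j\})\to0$. The $p$-adic Cartier isomorphism (Theorem \ref{theorem_p-adic_Cartier}), together with invertibility of the Breuil--Kisin line $W_r(\roi)\{1\}$, identifies $H^j(\tilde{W_r\Omega}_{\frak X}\{j\})\cong W_r\Omega^j_{\frak X/\roi}$ and $H^{j-1}(\tilde{W_r\Omega}_{\frak X}\{j\})=H^{j-1}(\tilde{W_r\Omega}_{\frak X}\{j-1\})\otimes_{W_r(\roi)}W_r(\roi)\{1\}\cong W_r\Omega^{j-1}_{\frak X/\roi}\{1\}$, compatibly with $R$; and $\projlim_{r\,\sub{ wrt }R}W_r\Omega^j_{\frak X/\roi}=W\Omega^j_{\frak X/\roi}$ by definition of the relative de Rham--Witt complex. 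Functoriality of the whole construction gives the compatibility with $F$. This step is routine; note that it is exactly the twisted tower $(W_r\Omega^{j-1}_{\frak X/\roi}\{1\})_r$, whose transition maps trivialise to $\tilde\theta_{r-1}(\xi)R$ rather than to the surjective restriction, that accounts for the non-triviality of the extension.

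For (ii), set $C:=\op{Rlim}_{r\,\sub{ wrt }R}\tau^{\le j}\tilde{W_r\Omega}_{\frak X}\{j\}$, so $\overline{W\Omega}^j_{\frak X/\roi}=H^j(C)$ and $C$ carries the endomorphism $F$. Applying $\op{Rlim}_{r\,\sub{ wrt }R}$ to the fibre sequence of Theorem \ref{main_theorem_over_C} (using Remark \ref{remark1}(i) to commute $\tau^{\le j}$ with $/p^N$, and that derived limits commute with $-\dotimes\bb Z/p^N$), the left term is unchanged since the pro sheaf $\tau^{\le j}R\nu_*(\bb Z/p^N\bb Z(j))$ is essentially constant by Lemma \ref{lemma_p_adic_cyces_via_witt}, while the middle and right terms both become $C/p^N$ after the tautological reindexing $\op{Rlim}_r\tilde{W_{r-1}\Omega}_{\frak X}\{j\}\cong C$ (with $\tilde{W_0\Omega}_\frak X=0$). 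Under this reindexing the transition map $R$ on the limit is the identity and the Frobenius is $F$, so $F-R$ becomes, up to sign, $1-F$; hence a fibre sequence $\tau^{\le j}R\nu_*(\bb Z/p^N\bb Z(j))\to C/p^N\xto{1-F}C/p^N$. Since the de Rham--Witt restriction maps are surjective, $\limone_rW_r\Omega^j_{\frak X/\roi}=0$, whence (by the Milnor sequence and Cartier) $H^{j+1}(C)=0$, $C\in D^{\le j}$, and $H^j(C/p^N)=\overline{W\Omega}^j_{\frak X/\roi}/p^N$ with no cohomology above degree $j$; the long exact cohomology sequence then degenerates in degrees $\ge j$ to
\[H^{j-1}(C/p^N)\xto{1-F}H^{j-1}(C/p^N)\To R^j\nu_*(\bb Z/p^N\bb Z(j))\xto{\iota}\overline{W\Omega}^j_{\frak X/\roi}/p^N\xto{1-F}\overline{W\Omega}^j_{\frak X/\roi}/p^N\To0.\]
Surjectivity of the last $1-F$ is automatic (the fibre has no cohomology in degree $j+1$), and composing the displayed $\iota$ with the projection $\overline{W\Omega}^j_{\frak X/\roi}\onto W\Omega^j_{\frak X/\roi}$ of part (i) recovers the $\iota$ of Corollary \ref{corollary_les} by naturality, so $\iota$ is the asserted lift.

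The one remaining point, and the \emph{real obstacle}, is the injectivity of $\iota$, i.e.\ the surjectivity of $1-F$ on $H^{j-1}(C/p^N)$; this is precisely the failure addressed by passing from $W\Omega^j_{\frak X/\roi}$ to $\overline{W\Omega}^j_{\frak X/\roi}$. I would establish it by the strategy of Section \ref{section_end}: feed the $\limone$--$\lim$ description $0\to\limone_rW_r\Omega^{j-2}_{\frak X/\roi}\{2\}\to H^{j-1}(C)\to\projlim_rW_r\Omega^{j-1}_{\frak X/\roi}\{1\}\to0$, together with the universal-coefficient sequence for $H^{j-1}(C/p^N)$, into the Artin--Schreier surjectivity and bijectivity results of Section \ref{subsection_more_as} --- the operators $F-\xi R$ and $F-\xi^2R$ on $\projlimf_{r\,\sub{ wrt }\xi R}W_r\Omega^i_{\frak X/\roi}/p^N$ and on the relevant $\limone$ terms --- combined with the torsion bounds of Corollary \ref{corollary_torsion_bound} and Lemma \ref{lemma_p_torsion_free_of_quotients}. (In the arithmetic situation of Remark \ref{remark_BK} one can instead deduce injectivity of $\iota$ directly from Bloch--Kato symbol generation.) With this in hand the displayed exact sequence is the assertion of (ii).
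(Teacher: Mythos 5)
Your treatment of (i) coincides with the paper's: both derive the extension from the Milnor sequence for $\op{Rlim}_{r\sub{ wrt }R}\tau^{\le j}\tilde{W_r\Omega}_\frak X\{j\}$ combined with the $p$-adic Cartier isomorphism. Your (ii) is a genuinely different route from the paper's, and it has a real gap at the crucial step.

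\emph{What you do differently.} You apply $\op{Rlim}_{r\sub{ wrt }R}$ directly to the single fibre sequence~(\ref{eqn_better}) for $j$, reindex so that $F-R$ becomes $F-1$, and read off the long exact sequence in cohomology. Surjectivity of $1-F$ in degree $j$ (the right end of the SES) is indeed automatic from $H^{j+1}(\tau^{\le j}R\nu_*\Lambda(j))=0$, as you say. The paper instead compares two copies of~(\ref{eqn_better}) --- for $j-1$ with an extra Tate twist and for $j$ --- and forms vertical \emph{cofibers} before applying $\op{Rlim}$. The point of this cofiber construction is that: (a) the left vertical cofiber is $R^j\nu_*(\bb Z/p^N(j))[-j]$ on the nose; (b) the $\projlim^1$ of the Tate-twisted tower $W_r\Omega^{j-1}(1)/p^N$ vanishes because its $R$-transition maps are surjective (unlike the Breuil--Kisin tower $W_r\Omega^{j-1}\{1\}$, whose $\tilde\theta_{r-1}(\xi)R$-transitions are not), so that $H^j(\op{Rlim})$ of the middle cofiber is still $\overline{W\Omega}^j_{\frak X/\roi}/p^N$; and (c) the residual degree-$(j-1)$ object one has to deal with is the cofiber's $H^{j-1}$, roughly the tower $W_r\Omega^{j-1}\{1\}/([\zeta_{p^r}]-1,p^N)$, which is much smaller than your $H^{j-1}(C/p^N)$ and is exactly the object already controlled by the results of Sections~\ref{subsection_more_as}--\ref{section_end} (Corollary~\ref{corollary3} and Lemma~\ref{lemma_p_torsion_free_of_quotients}; compare Corollary~\ref{lemma_5_term}).

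\emph{The gap.} You correctly identify that injectivity of $\iota$ is equivalent to surjectivity of $1-F$ on $H^{j-1}(C/p^N)$, and you explicitly flag this as the ``real obstacle''. But the concluding paragraph is a plan, not a proof. $H^{j-1}(C/p^N)$ is built from $\projlim^1_r W_r\Omega^{j-2}\{2\}$, $\projlim_r W_r\Omega^{j-1}\{1\}$, and (mod-$p^N$) torsion of $\overline{W\Omega}^j$, all entangled; it is not clear how ``feed the $\limone$--$\lim$ description into the Artin--Schreier results'' resolves, since Corollaries~\ref{corollary2}--\ref{corollary3} control quotients by $([\zeta_{p^r}]-1)^j$ and hold as statements about \emph{pro} sheaves, whereas here you are on the limit side, and the operator on $\projlim_rW_r\Omega^{j-1}\{1\}/p^N$ is $1-F$, not $F-\xi R$. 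This is precisely the difficulty the paper's cofiber trick is engineered to sidestep: by subtracting off the $(j-1)$-twisted tower, the troublesome degree-$(j-1)$ piece is reduced to the $([\zeta_{p^r}]-1)$-quotient, which is the object the Artin--Schreier machinery actually handles. Also, your parenthetical appeal to Remark~\ref{remark_BK} is slightly off target: that remark uses Bloch--Kato's result to verify Proposition~\ref{appendix}(ii) (surjectivity of $R^j\nu_*(\bb Z/p^N)\to R^j\nu_*(\bb Z/p^{N-1})$), not injectivity of a $\dlog$ map into an inverse limit; the paper explicitly declines to identify $\iota$ with $\dlog$ (Section~\ref{subsection_iota}), so injectivity does not follow from symbolic generation without further argument. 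In short: the degenerate part of your LES argument is fine, but the one statement that actually carries the content of (ii) --- that the boundary map out of $H^{j-1}$ vanishes --- is asserted, not established.
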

\begin{proof}
Since all cohomology sheaves of $\tau^{\le j}\tilde{W_r\Omega_{\frak X}}\{j\}$ (namely various twists of $W_r\Omega_{\frak X/\roi}^i$, $i\le j$) have vanishing cohomology on affines in the \'etale site of $\frak X$, no derived inverse limits beyond $\projlim^1$ appear in $\op{Rlim}_{r\sub{ wrt }R}\tau^{\le j}\tilde{W_r\Omega_{\frak X}}\{j\}$ and so we obtain a natural short exact sequence \[0\To\limone_{r\sub{ wrt }R}H^{j-1}(\tilde{W_r\Omega_{\frak X}})\To \res{W\Omega}^j_{\frak X/\roi}\To \projlim_{r\sub{ wrt }R}H^{j}(\tilde{W_r\Omega_{\frak X}})\To 0\] Recalling that the outer terms are given respectively by the limits of $W_r\Omega_{\frak X/\roi}^{j-1}\{1\}$ and $W_r\Omega_{\frak X/\roi}^{j}$ respectively gives (i). Moreover, taking the resulting sequence modulo $p^N$ gives an exact sequence \[0\To\limone_{r\sub{ wrt }R}(W_r\Omega_{\frak X/\roi}^{j-1}\{1\}/p^N)\To \res{W\Omega}^j_{\frak X/\roi}/p^N\To W\Omega_{\frak X/\roi}^j/p^N\To 0,\] where modding out by $p^N$ commutes with $\projlim^1$ since the latter functor is right exact (on \'etale sheaves with no higher cohomology on affines); this shows that $\res{W\Omega}^j_{\frak X/\roi}/p^N=H^j(\op{Rlim}_{r\sub{ wrt }R}\tau^{\le j}(\tilde{W_r\Omega_{\frak X}}\{j\}/p^N))$.

Next we compare two copies of (\ref{eqn_better}), firstly for $j-1$ but with an extra Tate twist and secondly for $j$:
\[\hspace{-2cm}\xymatrix{
\projlimf_{r}\tau^{\le j-1}R\nu_*(\bb Z/p^N\bb Z(j))\ar[r]\ar[d]& \projlimf_{r\sub{ wrt }R}\tau^{\le j-1}\tilde{W_r\Omega}_{\frak X}\{j-1\}(1)/p^N\ar[r]^{F-R}\ar[d]&\projlimf_{r\sub{ wrt }R}\tau^{\le j-1}\tilde{W_{r-1}\Omega}_{\frak X}\{j-1\}(1)/p^N\ar[d]\\
\projlimf_{r}\tau^{\le j}R\nu_*(\bb Z/p^N\bb Z(j))\ar[r]& \projlimf_{r\sub{ wrt }R}\tau^{\le j}\tilde{W_r\Omega}_{\frak X}\{j\}/p^N\ar[r]_{F-R}&\projlimf_{r\sub{ wrt }R}\tau^{\le j}\tilde{W_{r-1}\Omega}_{\frak X}\{j\}/p^N
}\]
Note that replacing $\projlimf_{r\sub{ wrt }R}\tau^{\le j}\tilde{W_r\Omega}_\frak X\{j\}$ by the cofiber of the middle vertical arrow does not change $H^j(\op{Rlim}_{r\sub{ wrt }R}(-))$: indeed, since all the cohomologies have vanishing higher \'etale cohomology on affines, only $\projlim$ and $\projlim^1$ appear and so it is enough to check that $H^j(\op{Rlim}_{r\sub{ wrt }R}(-))$ of the top middle term vanishes; but this is $\projlim_{r\sub{ wrt }R}^1W_r\Omega^{j-1}_{\frak X/\roi}(1)/p^N$, which vanishes since the restriction maps are surjective in this system.

This argument (replacing $j$ by $j+1$) also shows that $H^{j+1}(\op{Rlim}_{r\sub{ wrt }R}(-))$ of the bottom middle term vanishes. It now follows at once that $H^j(\op{Rlim}_{r\sub{ wrt }R}(-))$ of the vertical cofibers of the diagram is exactly the desired short exact sequence (ii).
\end{proof}

Similar arguments to those of the previous proof also lead to a description of the kernel of the map $\iota$ as the cokernel of a twisted $F-R$ map:

\begin{corollary}\label{lemma_5_term}
For each $N,j\ge0$ there is a natural exact sequence of \'etale sheaves on $\frak X$
\[\hspace{-1.8cm}\xymatrix@C=5mm{
0\ar[r]& \projlimf_{r\sub{ wrt }R}W_{r}\Omega^{j-1}_{\frak X/\roi}\{1\}/([\zeta_{p^r}]-1, p^N)\ar[r]^-{F-R}&  \projlimf_{r\sub{ wrt }R}W_{r-1}\Omega^{j-1}_{\frak X/\roi}\{1\}/([\zeta_{p^{r-1}}]-1, p^N)
\ar `r/8pt[d] `/12pt[l] `^dl[ll] `^r/0pt[dll] [dll]\\
 R^j\nu_*(\bb Z/p^N\bb Z(j))\ar[r]^{\iota} & \projlimf_{r\sub{ wrt }R}W_r\Omega^{j}_{\frak X/\roi}/p^N\ar[r]^{F- R} & \projlimf_{r\sub{ wrt } R}W_r\Omega^{j}_{\frak X/\roi}/p^N\ar[r] & 0
}\]
\end{corollary}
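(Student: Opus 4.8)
The plan is to obtain the sequence by taking cohomology in the fibre sequence (\ref{eqn_better}) of Theorem \ref{main_theorem_over_C} and then rewriting its first two terms by a Tate-twist argument, much as in the proof of Theorem \ref{theorem_extension}. Passing to cohomology pro-sheaves in (\ref{eqn_better}) yields precisely the long exact sequence of Corollary \ref{corollary_les}, whose relevant segment (with Breuil--Kisin twists left untrivialised) reads
\[
\cdots\To\projlimf_{r\sub{ wrt }R}W_{r-1}\Omega^{j-1}_{\frak X/\roi}\{1\}/p^N\xTo{\partial}R^j\nu_*(\bb Z/p^N\bb Z(j))\xTo{\iota}\projlimf_{r\sub{ wrt }R}W_r\Omega^j_{\frak X/\roi}/p^N\xTo{F-R}\projlimf_{r\sub{ wrt }R}W_{r-1}\Omega^j_{\frak X/\roi}/p^N\To 0,
\]
the surjectivity at the end being part of Corollary \ref{corollary_les}. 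Consequently $\ker\iota=\op{im}\partial$ is canonically the cokernel of $F-R\colon\projlimf_rW_r\Omega^{j-1}_{\frak X/\roi}\{1\}/p^N\to\projlimf_rW_{r-1}\Omega^{j-1}_{\frak X/\roi}\{1\}/p^N$, so it is enough to identify this cokernel with the cokernel of the analogous map in which $/p^N$ is replaced by $/([\zeta_{p^r}]-1,p^N)$, and to show that the latter map is injective; splicing then produces the six-term sequence.

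For the identification of cokernels I would use, for each $r$, the submodule identity $W_r\Omega^{j-1}_{\frak X/\roi}(1)=([\zeta_{p^r}]-1)W_r\Omega^{j-1}_{\frak X/\roi}\{1\}$, which (after tensoring with $\bb Z/p^N\bb Z$) produces a four-term exact sequence with left-hand term $(W_r\Omega^{j-1}_{\frak X/\roi}\{1\}/([\zeta_{p^r}]-1))[p^N]$, middle terms $W_r\Omega^{j-1}_{\frak X/\roi}(1)/p^N$ and $W_r\Omega^{j-1}_{\frak X/\roi}\{1\}/p^N$, and right-hand term $W_r\Omega^{j-1}_{\frak X/\roi}\{1\}/([\zeta_{p^r}]-1,p^N)$. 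By Lemma \ref{lemma_p_torsion_free_of_quotients}(iii) the transition maps $\xi R$ of the pro-system $\projlimf_rW_r\Omega^{j-1}_{\frak X/\roi}\{1\}/([\zeta_{p^r}]-1)$ kill all $p$-power torsion, so this pro-system is $p$-torsion-free and the left-hand term dies after applying $\projlimf_r$; one is thus left with a short exact sequence of pro-sheaves relating $\projlimf_rW_r\Omega^{j-1}(1)/p^N$, $\projlimf_rW_r\Omega^{j-1}\{1\}/p^N$ and $\projlimf_rW_r\Omega^{j-1}\{1\}/([\zeta_{p^r}]-1,p^N)$, and likewise at level $r-1$, compatibly with the maps $F-R$. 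Under the identification of Tate with Breuil--Kisin twists recalled in Section \ref{subsection_BK} the leftmost $F-R$ here is carried to $F-R\colon\projlimf_rW_r\Omega^{j-1}_{\frak X/\roi}\to\projlimf_rW_{r-1}\Omega^{j-1}_{\frak X/\roi}$, which is surjective by Theorem \ref{theorem_surj_on_formal_schemes}; the snake lemma applied to the two short exact sequences then identifies the two cokernels in question.

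It remains to prove injectivity, i.e.\ that $F-\xi R\colon\projlimf_rW_r\Omega^{j-1}_{\frak X/\roi}/([\zeta_{p^r}]-1,p^N)\to\projlimf_rW_{r-1}\Omega^{j-1}_{\frak X/\roi}/([\zeta_{p^{r-1}}]-1,p^N)$ is injective after trivialising twists. Corollary \ref{corollary3} gives the injectivity of $F-\xi R$ already on $\projlimf_rW_r\Omega^{j-1}/([\zeta_{p^r}]-1)$, and the analysis of the equivalence (iv)$\Leftrightarrow$(v) in Proposition \ref{appendix}, carried out with $j-1$ in place of $j$, identifies its cokernel with a pro-sheaf which we already know (condition (iv), which holds by Proposition \ref{proposition_p_tf}) to be $p$-torsion-free; the mod $p^N$ Tor sequence therefore forces injectivity to persist modulo $p^N$. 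I expect the main obstacle to be precisely this $p$-torsion bookkeeping: both the pro-vanishing of $(W_r\Omega^{j-1}_{\frak X/\roi}\{1\}/([\zeta_{p^r}]-1))[p^N]$ and the $p$-torsion-freeness of the relevant cokernel rest on the delicate torsion estimates of Lemma \ref{lemma_p_torsion_free_of_quotients}, Lemma \ref{lemma_intersection_of_rou} and Proposition \ref{appendix}, and one must keep careful track of the Tate-versus-Breuil--Kisin twist conventions so that every $F-R$ in sight matches up exactly.
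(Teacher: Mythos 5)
Your proposal is correct and follows essentially the same route as the paper's proof: start from the five-term end of Corollary \ref{corollary_les}, identify $\ker\iota$ with the cokernel of $F-R$ on $\projlimf_rW_r\Omega^{j-1}_{\frak X/\roi}\{1\}/p^N$, show this cokernel is unchanged upon further modding out by $[\zeta_{p^r}]-1$ (using the surjectivity of $F-R$ on the untwisted $W_r\Omega^{j-1}_{\frak X/\roi}$ from Theorem \ref{theorem_surj_on_formal_schemes}, together with Lemma \ref{lemma_p_torsion_free_of_quotients}(iii) to kill the Tor term in the pro-system), and finally prove that $F-\xi R$ modulo $([\zeta_{p^r}]-1,p^N)$ is injective. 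The only genuine (small) difference is in the last step: you deduce injectivity mod $p^N$ in one shot by observing that the cokernel $C$ of $F-\xi R\colon\projlimf_{r}W_r\Omega^{j-1}_{\frak X/\roi}/([\zeta_{p^r}]-1)\to\projlimf_{r}W_{r-1}\Omega^{j-1}_{\frak X/\roi}/([\zeta_{p^{r-1}}]-1)$ is $p$-torsion-free (by the (iv)$\Leftrightarrow$(v) identification and Proposition \ref{proposition_p_tf}), so $\Tor_1(\bb Z/p^N\bb Z,C)=0$ forces injectivity for all $N$ at once; the paper instead starts from condition (v) of Proposition \ref{appendix}, i.e.\ injectivity only mod $p$, and then runs an induction on $N$ using the $p$-torsion-freeness of the \emph{domain} $\projlimf_rW_r\Omega^{j-1}_{\frak X/\roi}\{1\}/[\zeta_{p^r}]-1$. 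Your version avoids the induction and is arguably cleaner, at the cost of leaning on the (iv)$\Leftrightarrow$(v) cokernel-identification twice rather than once. One small imprecision to fix in your write-up: when you say the leftmost $F-R$ ``is carried to $F-R\colon\projlimf_rW_r\Omega^{j-1}_{\frak X/\roi}\to\projlimf_rW_{r-1}\Omega^{j-1}_{\frak X/\roi}$'' you of course mean the map modulo $p^N$; the surjectivity mod $p^N$ indeed follows from Theorem \ref{theorem_surj_on_formal_schemes}, but the map in your diagram lives at the $/p^N$ level throughout.
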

\begin{proof}
The obstruction to injectivity of the map $\iota$ in Corollary \ref{corollary_les} is given by the cokernel of $F-R:\projlim_{r\sub{ wrt }R}W_r\Omega^{j-1}_{\frak X/\roi}\{1\}/p^N\to \projlim_{r\sub{ wrt }R}W_{r-1}\Omega^{j-1}_{\frak X/\roi}\{1\}/p^N$ which, trivialising the twists, can be rewritten as $F-\xi R:\projlim_{r\sub{ wrt }\xi R}W_r\Omega^{j-1}_{\frak X/\roi}/p^N\to \projlim_{r\sub{ wrt }\xi R}W_{r-1}\Omega^{j-1}_{\frak X/\roi}/p^N$. But, as we already saw in the proof of (iv)$\Leftrightarrow$(v) in Proposition \ref{appendix}, this latter cokernel is unchanged if we mod out by the elements $[\zeta_{p^r}]-1$, after which $F-\xi R$ becomes injective by the fact that condition (v) of Proposition \ref{appendix} is now known to be true (an easy induction to extend the result to $p^N$ rather than only $p$ is also required, using the $p$-torsion-freeness of $\projlimf_{r\sub{ wrt }R}W_{r}\Omega^{j-1}_{\frak X/\roi}\{1\}/[\zeta_{p^r}]-1$ proved in Lemma \ref{lemma_p_torsion_free_of_quotients}(iii)). Replacing the twists gives the claimed exact sequence.
\end{proof}

\subsection{Frobenius-fixed points of $\bb A\Omega_\frak X$}\label{subsection_nygaard}
Here we show how Theorem \ref{main_theorem_over_C} leads to an interpretation of $p$-adic vanishing cycles as the Frobenius-fixed points of $\bb A\Omega_\frak X=L\eta_\mu R\nu_*\bb A_{\sub{inf},X}\simeq\op{Rlim}_{r\sub{ wrt }F}\tilde{W_r\Omega}_\frak X$, the $q$-de Rham complex over $\bb A_\sub{inf}$ which was constructed in \cite{Bhatt_Morrow_Scholze2}. We stress that stronger such statements may be proved while simultaneously avoiding any de Rham--Witt complexes; see \cite[\S10]{Bhatt_Morrow_Scholze3}. Therefore we will be brief here.

We will use the Breuil--Kisin--Fargues twist over $\bb A_\sub{inf}$ \cite[\S4.3]{Bhatt_Morrow_Scholze2}, namely the invertible $\bb A_\sub{inf}$-module \[\bb A_\sub{inf}\{j\}:=\projlim_{r\sub{ wrt }F}W_r(\roi)\{j\}\cong\tfrac1\mu\bb A_\sub{inf}(j)\] for each $j\ge0$; this is anti-effective as a Breuil--Kisin--Fargues module in the sense that it is equipped with an inverse Frobenius semi-linear endomorphism $\phi^{-1}$ (induced by the maps $R:W_r(\roi)\{j\}\to W_{r-1}(\roi)\{j\}$), which is injective and has image $\xi^j\bb A_\sub{inf}\{j\}$. Recalling that $\mu=\xi\phi^{-1}(\mu)$ and that therefore $L\eta_\mu=L\eta_\xi L\eta_{\phi^{-1}(\mu)}$, it is easy to mimic arguments in Section \ref{section_bk2} to check that there is an  induced inverse Frobenius $\phi^{-1}:\tau^{\le j}\bb A\Omega_\frak X\{j\}\to \tau^{\le j}\bb A\Omega_\frak X\{j\}$;
The following proof implicitly extends this modulo $p^N$ from $(\tau^{\le j}\bb A\Omega_\frak X\{j\})/p^N$ to $\tau^{\le j}(\bb A\Omega_\frak X\{j\}/p^N)$, using which we prove:
 
\begin{theorem}\label{theorem_Nygaard}
For each $N,j\ge0$ there is a natural fibre sequence of complexes of sheaves on $\frak X_\sub{\'et}$ \[\tau^{\le j}R\nu_*(\bb Z/p^N\bb Z(j))\To \tau^{\le j}(\bb A\Omega_{\frak X}\{j\}/p^N)\xto{1-\phi^{-1}}\tau^{\le j}(\bb A\Omega_{\frak X}\{j\}/p^N)\]
\end{theorem}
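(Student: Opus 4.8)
The plan is to obtain the asserted fibre sequence by applying the homotopy limit $\op{Rlim}$ over $r$, this time with respect to the \emph{Frobenius} maps, to the fibre sequence of Theorem \ref{main_theorem_over_C}. This is legitimate because every object occurring in that argument -- the complexes $\tau^{\le j}\tilde{W_r\Omega}_\frak X\{j\}/p^N$ and the pro-\'etale sheaves $Z_r$, hence the $p$-adic vanishing cycle term $\projlimf_r\tau^{\le j}R\nu_*(Z_r(j)/p^N)$ which appears as the fibre in the proof of Theorem \ref{main_theorem_over_C} -- is equipped, besides its Restriction maps $R$, with compatible Frobenius maps $F$ (the commutation $FR=RF$ having been used throughout Sections \ref{subsection_Z}--\ref{section_end}), so that $\op{Rlim}_{r\sub{ wrt }F}$ may be formed on each term and on the maps between them. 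The backbone of the identification of the limiting objects is the equivalence $\bb A\Omega_\frak X\{j\}\simeq\op{Rlim}_{r\sub{ wrt }F}\tilde{W_r\Omega}_\frak X\{j\}$, which holds by construction of $\bb A\Omega_\frak X$ and of the Breuil--Kisin--Fargues twist.

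First I would treat the middle and right-hand terms, which will become two copies of $\tau^{\le j}(\bb A\Omega_\frak X\{j\}/p^N)$. Reduction mod $p^N$ commutes with $\op{Rlim}_{r\sub{ wrt }F}$, being the cofibre of multiplication by $p^N$, so $\op{Rlim}_{r\sub{ wrt }F}(\tilde{W_r\Omega}_\frak X\{j\}/p^N)\simeq\bb A\Omega_\frak X\{j\}/p^N$; the interchange of $\op{Rlim}_{r\sub{ wrt }F}$ with the truncation $\tau^{\le j}$ is then checked exactly as the analogous mod-$p^N$ statement recorded just before the theorem, using that the cohomology sheaves of $\tilde{W_r\Omega}_\frak X\{j\}$ are $p$-torsion-free (Corollary \ref{corollary_torsion_bound}) and have no higher \'etale cohomology on affines, so that only $\projlim$ and $\projlim^1$ intervene and the $\projlim^1$ appearing in degree $j+1$ does no harm. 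This gives $\op{Rlim}_{r\sub{ wrt }F}\tau^{\le j}(\tilde{W_r\Omega}_\frak X\{j\}/p^N)\simeq\tau^{\le j}(\bb A\Omega_\frak X\{j\}/p^N)$. Under this identification $F$, being the transition map of the limit, is carried to the identity, while $R$ is carried to $\phi^{-1}$: this is, up to the extension modulo $p^N$ mentioned above, precisely how the inverse Frobenius $\phi^{-1}$ on $\tau^{\le j}\bb A\Omega_\frak X\{j\}$ was defined. Hence $F-R$ becomes $1-\phi^{-1}$.

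Next I would treat the fibre term $\op{Rlim}_{r\sub{ wrt }F}\bigl(\projlimf_r\tau^{\le j}R\nu_*(Z_r(j)/p^N)\bigr)$. The observation that makes this harmless is that on $Z_r=\ker(F-R\colon W_r(\hat\roi_X^+)\to W_{r-1}(\hat\roi_X^+))$ the operators $F$ and $R$ literally coincide; therefore the tower $\{\tau^{\le j}R\nu_*(Z_r(j)/p^N)\}_r$ with transition maps $F$ is the same diagram as the one with transition maps $R$, and the latter pro-system is, by Lemma \ref{lemma_p_adic_cyces_via_witt} together with the $p$-torsion-freeness of $p$-adic vanishing cycles established in Section \ref{section_end} (Proposition \ref{appendix}), pro-isomorphic to the constant pro-sheaf $\tau^{\le j}R\nu_*(\bb Z/p^N\bb Z(j))$. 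Since pro-isomorphic towers have the same homotopy limit, $\op{Rlim}_{r\sub{ wrt }F}$ of the fibre term is simply $\tau^{\le j}R\nu_*(\bb Z/p^N\bb Z(j))$. Assembling the three identifications yields the desired fibre sequence.

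The only point requiring genuine care is the interchange of $\op{Rlim}_{r\sub{ wrt }F}$ with $\tau^{\le j}$ and with $-/p^N$ in the second step; beyond that the argument is purely formal once one has Theorem \ref{main_theorem_over_C}, the description of $\phi^{-1}$ via the Restriction maps, and the identity $F=R$ on the sheaves $Z_r$. As already remarked, this route is unnecessarily roundabout -- it passes through the de Rham--Witt complexes $\tilde{W_r\Omega}_\frak X$ and says nothing about the Nygaard filtration on $\bb A\Omega_\frak X\{j\}$, in contrast with the more direct treatment of \cite[\S10]{Bhatt_Morrow_Scholze3}.
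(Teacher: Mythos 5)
Your argument is correct and follows essentially the same route as the paper's proof: one observes that Theorem \ref{main_theorem_over_C} holds equally with $F$-transitions (because $F=R$ on the fibre of $F-R$), applies $\op{Rlim}_{r\sub{ wrt }F}$, identifies the middle and right terms with $\tau^{\le j}(\bb A\Omega_\frak X\{j\}/p^N)$ via $\bb A\Omega_\frak X\{j\}/p^N=\op{Rlim}_{r\sub{ wrt }F}(\tilde{W_r\Omega}_\frak X\{j\}/p^N)$, and reads off $F\mapsto 1$, $R\mapsto\phi^{-1}$. The paper is terser (``for formal reasons'') while you spell out the commutations with $-/p^N$ and $\tau^{\le j}$ and the treatment of the fibre term, but the ideas coincide.
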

\begin{proof}
Theorem \ref{main_theorem_over_C} remains valid for formal reasons if $\projlimf_{r\sub{ wrt }R}$ is replaced by $\projlimf_{r\sub{ wrt }F}$ (since the two possible transition maps $F,R$ agree on the fibre of $F-R$). By then taking $\op{Rlim}_{r\sub{ wrt }F}$ we obtain a fibre sequence 
\[\tau^{\le j}R\nu_*(\bb Z/p^N\bb Z(j))\To \op{Rlim}_{r\sub{ wrt }F}\tau^{\le j}(\tilde{W_r\Omega}\{j\}/p^N)\xto{F-R} \op{Rlim}_{r\sub{ wrt }F}\tau^{\le j}(\tilde{W_r\Omega}\{j\}/p^N)\] But $\bb A\Omega_{\frak X}\{j\}/p^N=\op{Rlim}_{r\sub{ wrt }F}(\tilde{W_r\Omega}_\frak X\{j\}/p^N)$, so applying $\tau^{\le j}$ to the middle and right terms of the above complex give exactly $\tau^{\le j}(\bb A\Omega_{\frak X}\{j\}/p^N)$.
\end{proof}

\section{Addendum}\label{addendum}

\subsection{Comments on the map $\iota$}\label{subsection_iota}
We expect that the map $\iota:R^j\nu_*(\bb Z/p^N\bb Z(j))\to W_r\Omega^j_{\frak X/\roi}/p^N$ of Corollary \ref{corollary_les} is the $\op{dlog}$ map defined through viewing $R^j\nu_*(\bb Z/p^N\bb Z(j))$ as symbols (see Section \ref{section_Geisser_Hesselholt} for further discussion), but since we cannot currently offer a new proof of Bloch--Kato's symbolic generation of $p$-adic vanishing cycles, we ignore the question for the time being. Assuming that it is true, we present here an informal calculation indicating, in particular, that certain $\projlim^1$ terms cannot be discarded from our study; to summarise what follows, $R^j\nu_*(\bb Z/p^N\bb Z(j))$ appears to be separated, but not complete, with respect to a certain ramification filtration defined as the kernels of the $\iota$ maps.

More precisely, by explicitly manipulating symbols in de Rham--Witt groups, it seems that the principal units killed by \[\dlog:\roi_\frak X^\times\to W_r\Omega^1_{\frak X/\roi}/p,\qquad f\mapsto \tfrac{d[f]}{[f]}\] are those which are $\equiv 1$ modulo $p\tfrac{\zeta_p-1}{\zeta_{p^r}-1}$; conversely, any principle unit which is $\equiv 1$ modulo $p(\zeta_p-1)$ is a $p^\sub{th}$-power \'etale locally. Since \begin{equation}\roi_\frak X/p(\zeta_p-1)\To \projlim_r\roi_\frak X/p\tfrac{\zeta_p-1}{\zeta_{p^r}-1}\label{eqn_inj_not_surj}\end{equation} is injective but not surjective, we see that the above $\dlog$ map is not injective for any fixed value of $r$, but that at least it is after taking the limit over $r$, i.e., $\dlog: \roi_\frak X^\times/p\into W\Omega^1_{\frak X/\roi}/p$. More generally, $\iota$ should not be injective for any fixed value of $r$, but probably $\iota:R^j\nu_*(\bb Z/p^N\bb Z(j))\to W\Omega^j_{\frak X/\roi}/p^N$ is injective.

In light of this it is natural to ask whether the extension $\res{W\Omega}^j_{\frak X/\roi}$ of ${W\Omega}^j_{\frak X/\roi}$ from Section \ref{subsection_extension} is really necessary, i.e., can Theorem \ref{theorem_extension}(ii) be improved to an exact sequence \[0\To R^j\nu_*(\bb Z/p^N\bb Z(j))\xTo{\iota} {W\Omega}^j_{\frak X/\roi}/p^N\xto{1-F}{W\Omega}^j_{\frak X/\roi}/p^N\To 0\,?\] (Equivalently, is the endomorphism $F-R$ of $\projlim_{r\sub{ wrt }R}^1W_r\Omega_{\frak X/\roi}^{j-1}\{1\}$ surjective?) But this holds if and only if $R^j\nu_*(\bb Z/p^N\bb Z(j))$ is complete for the filtration $\ker(R^j\nu_*(\bb Z/p^N\bb Z(j))\xto{\iota} W_r\Omega^j_{\frak X/\roi}/p^N)$, $r\ge1$. The non-surjectivity of (\ref{eqn_inj_not_surj}) suggests that this is not the case.

\subsection{Relation to the result of Geisser--Hesselholt}\label{section_Geisser_Hesselholt}
We finish the article with a discussion of the related result of Geisser and Hesselholt \cite{GeisserHesselholt2006c}. Given a smooth $\roi_K$-scheme $\cal X$, where $K$ is a complete discrete valuation field of mixed characteristic with perfect residue field, they construct (under the assumption $\zeta_{p^N}\in K$) a short exact sequence of pro sheaves on the \'etale site of the special fibre of $\cal X$: \[0\To i^*R^jj_*(\bb Z/p^N\bb Z(j))\xTo{\op{dlog}} \projlimf_{r\sub{ wrt R}}W_r\Omega^j_{(\cal X,M_\cal X)}/p^N\xto{F-R}\projlimf_{r\sub{ wrt R}}W_{r-1}\Omega^j_{(\cal X,M_\cal X)}/p^N\To 0\] (where $i,j$ denote the usual inclusion of the special and generic fibres; the unfortunate double usage of $j$ should not cause confusion). Here $W_r\Omega^\blob_{(\cal X,M_\cal X)}$ denotes the absolute de Rham--Witt complex of $\cal X$ as a log scheme (with respect to the usual log structure given by the special fibre), and $\op{dlog}$ is induced from the $\op{dlog}$ map on Milnor $K$-theory of the sheaf $\roi_\cal X[\tfrac1p]$ via the diagram
\[\xymatrix@C=5cm{
K_j^M(\roi_\cal X[\tfrac1p])/p^N\ar@{->>}[d]_{\text{Bloch--Kato's symbol map}}\ar[r]^{\{f_1,\dots,f_j\}\mapsto\op{dlog}[f_1]\wedge\cdots\wedge\dlog[f_j]} & W_r\Omega^j_{(\cal X,M_\cal X)}/p^N\\
i^*R^jj_*(\bb Z/p^N\bb Z(j))\ar[ur]_{\op{dlog}} & 
}\]
Geisser--Hesselholt show that the $\op{dlog}$ map on Milnor $K$-theory really does descend to $i^*R^jj_*(\bb Z/p^N\bb Z(j))$ and establish their short exact sequence by comparing the graded pieces of the ramification filtration on $i^*R^jj_*(\bb Z/p^N\bb Z(j))$ with an explicit filtration on $W_r\Omega^j_{(\cal X,M_\cal X)}/p^N$ defined in terms of multiples of the maximal ideal of $\roi_K$.

Let us now adopt the framework of the current article with $C:=\hat{\res K}$ and $\frak X:=$the $p$-adic completion of $\cal X\times_{\roi_K}\roi_{\res K}$. There is a canonical map $W_r\Omega^j_{(\cal X,M_\cal X)}/p^N\to W_r\Omega^j_{\frak X/\roi}/p^N$ which becomes an isomorphism when we take the filtered colimit of the domain over all finite extensions of $K$: this is an easy consequence of the facts that de Rham--Witt complexes commute with filtered colimits, that the absolute de Rham--Witt group $W_r\Omega^j_\roi$ is $p$-divisible for $j\ge1$,  and that the extra class $\dlog[\pi_K]$ in the de Rham--Witt group of the log scheme $\cal X$ becomes divisible by $p^N$ after passing to a suitable finite extension of $K$. (See also \cite[Add.~1.3.6]{GeisserHesselholt2006c} and note that the boundary map there becomes zero after extracting a $p^\sub{th}$ root of $\pi_K$.)

In particular, assuming that the map $\iota:R^j\nu_*(\bb Z/p^N\bb Z(j))\to W_r\Omega^j_{\frak X/\roi}/p^N$ really is $\op{dlog}$, then we obtain a commutative diagram
\[\hspace{-0.5cm}\xymatrix{
\cdots\ar[r]& \res i^*R^j\res j_*(\bb Z/p^N\bb Z(j))\ar[r]^-{\iota}&\projlimf_{r\sub{ wrt }R}W_r\Omega^{j}_{\frak X/\roi}/p^N\ar[r]^{F- R}&\projlimf_{r\sub{ wrt } R}W_r\Omega^{j}_{\frak X/\roi}/p^N\ar[r]&0\\
0\ar[r]& i^*R^jj_*(\bb Z/p^N\bb Z(j))\ar[u]\ar[r]_-{\op{dlog}}& \projlimf_{r\sub{ wrt R}}W_r\Omega^j_{(\cal X,M_\cal X)}/p^N\ar[u]\ar[r]_{F-R}&\projlimf_{r\sub{ wrt R}}W_{r-1}\Omega^j_{(\cal X,M_\cal X)}/p^N\ar[u]\ar[r]& 0
}\]
The necessary value of $r$ to ensure injectivity of $\dlog: i^*R^jj_*(\bb Z/p^N\bb Z(j))\to W_r\Omega^j_{(\cal X,M_\cal X)}/p^N$ depends on the ramification degree of $K$ \cite[Rmk.~2.1.6]{GeisserHesselholt2006c}, offering still further evidence that $\iota$ is not injective for any fixed value of $r\ge 1$.

Although the bottom row of the previous diagram remains valid after replacing $K$ by any finite extension, the resulting filtered colimit cannot be exchanged with the pro sheaves. We therefore know of no way to interpret the top row as a filtered colimit of Geisser--Hesselholt's result over all finite extensions of $K$.

\begin{appendix}
\section{Surjectivity of $F-1$ in the \'etale topology}
We prove here that $F-R$ is surjective on the relative de Rham--Witt sheaves of a wide class of formal schemes, including any scheme on which $p$ is nilpotent. Fix throughout this appendix a $\bb Z_{(p)}$-algebra $A$.

We begin with some useful notation, supposing that $B$ is an $A$-algebra and that $I\subseteq B$ is an ideal. Set
\begin{align*}
W_r\Omega_{(B,I)/A}^j&:=\ker(W_r\Omega^j_{B/A}\To W_{r}\Omega^j_{(B/I)/A}),\\ \\
W_r\Omega_{B/A}^{j,F=1}&:=\ker(W_r\Omega_{B/A}^j\xto{F-R}W_{r-1}\Omega_{B/A}^j),\\ \\
W_r\Omega_{(B,I)/A}^{j,F=1}&:=\ker(W_r\Omega_{(B,I)/A}^j\xto{F-R}W_{r-1}\Omega_{(B,I)/A}^j)\\
&\phantom{:}=\ker(W_r\Omega_{B/A}^{j,F=1}\To W_{r}\Omega_{(B/I)/A}^{j,F=1}
\end{align*}

\begin{lemma}\label{lemma_app1}
Let $B$ be an $A$-algebra and $I\subseteq A$ an ideal. Then $W_r\Omega_{(B,I)/A}^\blob$ is the dg ideal of the dg algebra $W_r\Omega_{B/A}^\blob$ generated by $W_r(I)\subseteq W_r(B)$
\end{lemma}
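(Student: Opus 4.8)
The plan is to prove the nontrivial inclusion $W_r\Omega^\bullet_{(B,I)/A}\subseteq K_r$, where I write $K_\bullet$ for the dg ideal of $W_\bullet\Omega^\bullet_{B/A}$ generated by $W_\bullet(I)$ and $W_r(I):=\ker(W_r(B)\to W_r(B/I))$ for the ideal of length-$r$ Witt vectors with all components in $I$ (recall it is generated as an abelian group by the $V^n[a]$, $a\in I$, $0\le n<r$). The opposite inclusion is free: the functoriality map $W_r\Omega^\bullet_{B/A}\to W_r\Omega^\bullet_{(B/I)/A}$ is a map of dg algebras killing $W_r(I)$, hence killing the dg ideal it generates. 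To obtain $W_r\Omega^\bullet_{(B,I)/A}\subseteq K_r$ I would show that the induced surjection of pro complexes $\bar E_\bullet:=W_\bullet\Omega^\bullet_{B/A}/K_\bullet\to W_\bullet\Omega^\bullet_{(B/I)/A}$ is an isomorphism, by realizing $\bar E_\bullet$ as a Witt complex over $A$ and invoking the universal property of the relative de Rham--Witt complex \cite{LangerZink2004}.

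The structural input is the standard fact that $W_r\Omega^\bullet_{B/A}$ is generated, as a graded $W_r(A)$-algebra, by $W_r(B)$ in degree $0$ together with $dW_r(B)$ in degree $1$ -- equivalently, every element of $W_r\Omega^q_{B/A}$ is a sum of monomials $x_0\,dx_1\cdots dx_q$ with all $x_i\in W_r(B)$ (equivalently still, $\Omega^\bullet_{W_r(B)/W_r(A)}\to W_r\Omega^\bullet_{B/A}$ is surjective). Granting this, I would first check that $K_r$ is precisely the span of those monomials $x_0\,dx_1\cdots dx_q$ possessing at least one ``letter'' $x_i\in W_r(I)$: such monomials form a dg ideal containing $W_r(I)$, and conversely each of them lies in the dg ideal generated by $W_r(I)$ because $dW_r(I)$ does. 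In particular $K_\bullet$ is graded with $K_r^0=W_r(I)$, so $\bar E_\bullet$ is a graded-commutative pro dg $W_\bullet(A)$-algebra with $\bar E_\bullet^0=W_\bullet(B)/W_\bullet(I)=W_\bullet(B/I)$.

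Next I would verify that the de Rham--Witt operators $d,R,F,V$ descend to $\bar E_\bullet$. For $d$ this is immediate ($K_\bullet$ is a dg ideal), and for $R$ it follows from $R$ being a dg-algebra map with $R(W_r(I))\subseteq W_{r-1}(I)$, which preserves the ``has a letter in the ideal'' property of a monomial. For the Frobenius $F$ -- a graded ring homomorphism, but not a chain map -- multiplicativity and the monomial description reduce $F(K_r)\subseteq K_{r-1}$ to the claim that $F(dx)\in K_{r-1}$ for $x\in W_r(I)$; writing $x=\sum_n V^n[a_n]$ with $a_n\in I$ and using $FdV=d$ and $Fd[a]=[a]^{p-1}d[a]$, this reduces to the evident $[a_0]\in W_{r-1}(I)$ and $V^{n-1}[a_n]\in W_{r-1}(I)$. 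For $V$ the key identity is the projection formula in the form $V(\alpha\,d\beta)=V(\alpha)\,dV(\beta)$ (from $V(\alpha)\gamma=V(\alpha\, F\gamma)$ applied with $\gamma=dV\beta$ and $FdV=d$): it gives $V(x_0\,dx_1\cdots dx_q)=V(x_0\,dx_1\cdots dx_{q-1})\cdot dV(x_q)$, and an induction on $q$ then displays in each case a factor already lying in $K_\bullet$ (either $V(x_0\,dx_1\cdots dx_{q-1})$, by the inductive hypothesis, or $dV(x_q)$, when $x_q\in W_r(I)$). Since the Witt-complex axioms are identities valid upstairs, they descend, so $\bar E_\bullet$ is a Witt complex over $A$ with degree-zero part $W_\bullet(B/I)$, the induced $F,V,R$ on $\bar E_\bullet^0$ being the usual Witt-vector operations by naturality.

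It then remains to invoke universality. Since $W_\bullet\Omega^\bullet_{(B/I)/A}$ is the initial Witt complex over $A$ under $W_\bullet(B/I)$, the identification $W_\bullet(B/I)=\bar E_\bullet^0$ induces a morphism $\sigma\colon W_\bullet\Omega^\bullet_{(B/I)/A}\to\bar E_\bullet$ of Witt complexes under $W_\bullet(B/I)$. Composing $\sigma$ with the canonical projection $\bar\pi\colon\bar E_\bullet\to W_\bullet\Omega^\bullet_{(B/I)/A}$ gives an endomorphism of $W_\bullet\Omega^\bullet_{(B/I)/A}$ under $W_\bullet(B/I)$, hence $\bar\pi\circ\sigma=\mathrm{id}$ by uniqueness; and $\sigma\circ\bar\pi=\mathrm{id}$ because both sides agree in degree $0$ and $\bar E_\bullet$ is generated as a dg algebra by $\bar E_\bullet^0$. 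Thus $\bar\pi$ is an isomorphism, i.e.\ $K_\bullet$ is the full kernel $W_\bullet\Omega^\bullet_{(B,I)/A}$, and evaluating at level $r$ finishes the proof. I expect the $V$-descent step to be the main obstacle: it is the one place where the fine structure of the de Rham--Witt complex is genuinely used, and it hinges both on the monomial presentation of $W_r\Omega^\bullet_{B/A}$ and on the projection-formula identity $V(\alpha\,d\beta)=V(\alpha)\,dV(\beta)$.
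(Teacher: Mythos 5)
Your proof is correct and takes essentially the same route as the paper's (which cites Geisser--Hesselholt, Lem.~2.4, for the details you've supplied): show the operators $R,F,V$ descend to the quotient by the dg ideal, obtain an $F$-$V$-procomplex for $A\to B/I$, and play the universal property of $W_\bullet\Omega^\bullet_{(B/I)/A}$ against the canonical surjection. Your characterization of $K_r$ as the span of monomials with a letter in $W_r(I)$, together with the $V^n[a]$-generation of $W_r(I)$ and the projection-formula identity $V(\alpha\,d\beta)=V\alpha\cdot dV\beta$, is exactly the content the paper outsources to the reference, and your closing argument that $\sigma\circ\bar\pi=\mathrm{id}$ (via degree-$0$ generation of $\bar E_\bullet$) makes explicit the point the paper leaves at ``whence both maps are isomorphisms''. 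One small note: the statement in the paper reads $I\subseteq A$, which is a typo for $I\subseteq B$ (compare the definitions just above it); you read it correctly.
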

\begin{proof}
Standard identities for the operators on the relative de Rham--Wtt complex show that $R,F,V$ all descend to $W_r\Omega_{B/A}^\blob$ modulo the stated dg ideal, thereby yielding a $F$-$V$-procomplex for $A\to B/I$ which clearly maps surjectively to $W_r\Omega^\blob_{(B/I)/A}$. The universal property of the relative de Rham--Witt complex for $A\to B/I$ yields a section in the other direction, whence both maps are isomorphisms. See \cite[Lem.~2.4]{GeisserHesselholt2006b} for the details of the argument (which are only stated in the case $A=\bb F_p$ but work verbatim in general).
\end{proof}

\begin{lemma}\label{lemma_surj_of_1-F_in_nilp_case}
Let $B$ be an $A$-algebra and $I\subseteq J\subseteq B$ ideals such that $I$ is nilpotent; fix $j\ge0$ and $r\ge 1$. Then:
\begin{enumerate}
\item The map $R-F:W_{r}\Omega_{(B,I)/A}^j\to W_{r-1}\Omega_{(B,I)/A}^j$ is surjective.
\item The canonical maps $W_r\Omega_{B/A}^{j,F=1}\to W_r\Omega_{(B/I)/A}^{j,F=1}$ and $W_r\Omega_{(B,J)/A}^{j,F=1}\to W_r\Omega_{(B/I,J/I)/A}^{j,F=1}$ are surjective.
\end{enumerate}
\end{lemma}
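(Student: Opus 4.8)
The plan is to establish (i) first; (ii) will then follow by a short diagram chase.

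\emph{Reduction of (i) to square-zero ideals.} By Lemma~\ref{lemma_app1}, $W_r\Omega^\bullet_{(B,I)/A}$ is the dg ideal of $W_r\Omega^\bullet_{B/A}$ generated by $W_r(I)$, and applying the same lemma to a smaller ideal lets one compare such dg ideals for nested ideals. Suppose $I^n=0$ and set $I':=I^{n-1}$, so that $I'$ is square-zero and $I/I'\subseteq B/I'$ has smaller index of nilpotence. Since $W_r(I)$ surjects onto $W_r(I/I')$, Lemma~\ref{lemma_app1} produces, in each degree $j$, a short exact sequence of abelian groups
\[0\to W_r\Omega^j_{(B,I')/A}\to W_r\Omega^j_{(B,I)/A}\to W_r\Omega^j_{(B/I',\,I/I')/A}\to 0,\]
compatible with the two maps $R$ and $F$ from level $r$ to level $r-1$. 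Regarding $[\,W_r\Omega^j\xrightarrow{F-R}W_{r-1}\Omega^j\,]$ as a two-term complex (for $B$, for $B/I'$, and for the relative term), the long exact cohomology sequence sandwiches $\operatorname{coker}(F-R)$ of the middle complex between those of the outer two; by induction on $n$ these vanish, so $F-R$ is surjective on $W_r\Omega^j_{(B,I)/A}$. Everything thus reduces to the case $I^2=0$.

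\emph{The square-zero case.} Here $W_r(I)^2=0$: by the Witt product formula any product $V^i[a]\cdot V^{i'}[a']$ with $a,a'\in I$ has a factor lying in $I^2$, hence vanishes. Moreover, since the Teichm\"uller and carry terms of the Witt arithmetic all land in $I^2=0$, one has $W_r(I)\cong I^{\oplus r}$ as an abelian group, with $R$ the truncation $I^{\oplus r}\to I^{\oplus(r-1)}$ and $F$ the composite of the down-shift with multiplication by $p$. Surjectivity of $F-R$ on the dg ideal $W_r\Omega^\bullet_{(B,I)/A}$ must then be verified directly from this description: I would induct on $r$ along the filtration by $\ker(R^{r-1})$, lifting a given element through the surjection $R$ and correcting the error term, which lies in the image of $F$ and is therefore ``small'' (divisible by $p$, resp.\ in a deeper step of the $V$-filtration) on the relevant graded pieces, so that the correction procedure terminates. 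This bookkeeping with the explicit dg-ideal structure --- and in particular the mild extra care needed at $p=2$, where $F\,d[a]=[a]^{p-1}d[a]$ does not simply vanish --- is the one genuinely computational point, and I expect it to be the main obstacle.

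\emph{Deduction of (ii).} Apply the snake lemma to the short exact sequences of two-term complexes $[\,\cdot\xrightarrow{F-R}\cdot\,]$ associated to
\[0\to W_r\Omega^\bullet_{(B,I)/A}\to W_r\Omega^\bullet_{B/A}\to W_r\Omega^\bullet_{(B/I)/A}\to 0\]
and to $0\to W_r\Omega^\bullet_{(B,I)/A}\to W_r\Omega^\bullet_{(B,J)/A}\to W_r\Omega^\bullet_{(B/I,\,J/I)/A}\to 0$; for the latter one uses Lemma~\ref{lemma_app1} to identify the kernel of $W_r\Omega^\bullet_{(B,J)/A}\to W_r\Omega^\bullet_{(B/I,J/I)/A}$ with the dg ideal generated by $W_r(I)$, i.e.\ with $W_r\Omega^\bullet_{(B,I)/A}$. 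Both sequences stay exact at level $r-1$, so the six-term exact sequence of the snake lemma has its connecting homomorphism landing in $\operatorname{coker}(F-R:W_r\Omega^j_{(B,I)/A}\to W_{r-1}\Omega^j_{(B,I)/A})$, which vanishes by (i). Hence the maps on the $F=1$ subgroups --- the $H^0$'s of these two-term complexes --- are surjective, which is exactly (ii).
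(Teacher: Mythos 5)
Your reduction of (i) to the square-zero case via the short exact sequence
\[0\to W_r\Omega^j_{(B,I^{n-1})/A}\to W_r\Omega^j_{(B,I)/A}\to W_r\Omega^j_{(B/I^{n-1},\,I/I^{n-1})/A}\to 0\]
(which indeed follows from Lemma~\ref{lemma_app1}, since the dg ideal generated by $W_r(I)$ maps onto the dg ideal generated by $W_r(I/I^{n-1})$ with kernel the dg ideal generated by $W_r(I^{n-1})$), and your deduction of (ii) from (i) by the snake lemma applied to the two-term complexes $[W_r\xrightarrow{F-R}W_{r-1}]$, are both correct.

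The crux, however --- surjectivity of $R-F$ on $W_r\Omega^j_{(B,I)/A}$ when $I^2=0$ --- is a genuine gap, as you acknowledge. Moreover the heuristic you offer for it is wrong as stated. After lifting $\omega$ along $R$ and passing to the error term $F\omega'$, the generators $\eta\cdot dV^i[a]$ ($a\in I$, $i\ge 1$) of the dg ideal contribute $F(\eta)\cdot dV^{i-1}[a]$: this is neither divisible by $p$ nor in a \emph{deeper} step of the $V$-filtration --- the exponent of $V$ has strictly \emph{decreased}. So ``the error term is small on the relevant graded pieces'' fails precisely for the $dV$-type generators, and the termination of your correction loop actually hinges on eventually reaching $i=0$, where $F\,d[a]=[a]^{p-1}d[a]$ lands in the $I^{p-1}$-part of the dg ideal (zero for $p\ge 3$, only $2$-torsion for $p=2$); none of this is verified, and combining it with the decomposition of a general element of the dg ideal is exactly the content of the lemma. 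For comparison, the paper's own ``proof'' is a citation: it invokes \cite[Lem.~2.9]{Morrow_pro_GL} for the case $A=\bb F_p$, noting that the argument there carries over verbatim once one has Lemma~\ref{lemma_app1}. Your attempt at a self-contained argument is more ambitious, but without the computation in the square-zero case being carried out in full it does not yet constitute a proof of~(i), and hence neither of~(ii).
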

\begin{proof}
These assertions were proved in \cite[Lem.~2.9]{Morrow_pro_GL} in the case that $A=\bb F_p$, but the given arguments work in general using Lemma \ref{lemma_app1}.
\end{proof}

\begin{corollary}\label{proposition_limit_of_F-fixed_points}
Let $B$ be an $A$-algebra and $I\subseteq J\subseteq B$ ideals; fix $j\ge0$ and $r\ge 1$. Then:
\begin{enumerate}
\item The canonical maps $\projlim_sW_r\Omega_{(B/I^s)/A}^{j,F=1}\to W_r\Omega_{(B/I)/A}^{j,F=1}$ and $\projlim_sW_r\Omega_{(B/I^s,J/I^s)/A}^{j,F=1}\to W_r\Omega_{(B/I,J/I)}^{j,F=1}$ are surjective.
\item The map $R-F:\projlim_sW_r\Omega_{(B/I^s,I/I^s)/A}^j\to \projlim_sW_{r-1}\Omega_{(B/I^s,I/I^s)/A}^j$ is surjective.
\end{enumerate}
\end{corollary}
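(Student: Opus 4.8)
The plan is to deduce both statements from Lemma~\ref{lemma_surj_of_1-F_in_nilp_case}, applied to two families of ideals: the square-zero ideals $I^s/I^{s+1}\subseteq B/I^{s+1}$ (which control the transition maps of the towers in $s$) and the order-$s$ nilpotent ideals $I/I^s\subseteq B/I^s$ (which control surjectivity at a fixed level $s$). The only homological input beyond Lemma~\ref{lemma_surj_of_1-F_in_nilp_case} is the standard fact that a tower of abelian groups with surjective transition maps is Mittag--Leffler, so that its $\limone$ vanishes and its limit surjects onto every term.

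For part~(i), I would first check that the tower $\big(W_r\Omega^{j,F=1}_{(B/I^s)/A}\big)_s$ has surjective transition maps, and similarly for $\big(W_r\Omega^{j,F=1}_{(B/I^s,J/I^s)/A}\big)_s$. Indeed, for each $s\ge1$ the ideal $I^s/I^{s+1}$ of $B/I^{s+1}$ is square-zero (as $I^{2s}\subseteq I^{s+1}$), its quotient ring is $B/I^s$, and the image of $J/I^{s+1}$ in that quotient is $J/I^s$; so Lemma~\ref{lemma_surj_of_1-F_in_nilp_case}(ii) applied with these data yields exactly the surjectivity of the transition map $W_r\Omega^{j,F=1}_{(B/I^{s+1})/A}\to W_r\Omega^{j,F=1}_{(B/I^s)/A}$ (and of its relative analogue). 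That these maps do restrict to the relevant de Rham--Witt subobjects is immediate from Lemma~\ref{lemma_app1}, which describes $W_r\Omega^\blob_{(B,I)/A}$ as the dg ideal generated by $W_r(I)$. Passing to $\projlim_s$ and projecting onto the $s=1$ term then gives~(i).

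For part~(ii), I would assemble for each $s$ the tautological short exact sequence of towers
\[0\To W_r\Omega^{j,F=1}_{(B/I^s,I/I^s)/A}\To W_r\Omega^{j}_{(B/I^s,I/I^s)/A}\xto{R-F}W_{r-1}\Omega^{j}_{(B/I^s,I/I^s)/A}\To 0,\]
in which the surjectivity of $R-F$ at each fixed $s$ is Lemma~\ref{lemma_surj_of_1-F_in_nilp_case}(i) applied to the nilpotent ideal $I/I^s\subseteq B/I^s$ (nilpotent since $(I/I^s)^s=0$), and the maps are compatible in $s$ again by Lemma~\ref{lemma_app1}. Taking $\projlim_s$ produces a six-term exact sequence, so the surjectivity of $R-F$ on the limits reduces to the vanishing of $\limone_s W_r\Omega^{j,F=1}_{(B/I^s,I/I^s)/A}$. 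But this is the tower already treated in part~(i) with $J=I$, so its transition maps are surjective, it is Mittag--Leffler, and its $\limone$ vanishes; this proves~(ii).

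I do not anticipate a real obstacle. The only points that need care are bookkeeping ones: getting the right nilpotent ideal at each stage (the \emph{square-zero} ideal $I^s/I^{s+1}$ governs the transition maps, while the \emph{order-$s$} nilpotent ideal $I/I^s$ governs level-$s$ surjectivity), and confirming that the quotient operations commute as claimed, e.g.\ $(B/I^{s+1})/(I^s/I^{s+1})=B/I^s$ and $(J/I^{s+1})/(I^s/I^{s+1})=J/I^s$.
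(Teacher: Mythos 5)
Your proof is correct and takes essentially the same route as the paper's: both deduce (i) from the surjectivity of the transition maps via Lemma~\ref{lemma_surj_of_1-F_in_nilp_case}(ii), and both deduce (ii) by taking $\projlim_s$ of the short exact sequences furnished by Lemma~\ref{lemma_surj_of_1-F_in_nilp_case}, with the Mittag--Leffler vanishing of $\limone$ supplied by (i) applied with $J=I$. Your write-up merely makes explicit which nilpotent ideal is fed into the lemma at each stage (the square-zero $I^s/I^{s+1}$ for the transition maps, the order-$s$ nilpotent $I/I^s$ for level-$s$ surjectivity), which the paper leaves implicit.
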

\begin{proof}
(i): The transition maps in the systems $\{W_r\Omega_{(B/I^s)/A}^{j,F=1}\}_s$ and $\{W_r\Omega_{(B/I^s,J/I^s)/A}^{j,F=1}\}_s$ are surjective, by Lemma \ref{lemma_surj_of_1-F_in_nilp_case}(ii).

(ii): Lemmas \ref{lemma_surj_of_1-F_in_nilp_case} yields short exact sequences \[0\To W_r\Omega_{(B/I^s,I/I^s)/A}^{j,F=1}\To W_r\Omega_{(B/I^s,I/I^s)/A}^j\xto{F-R}W_{r-1}\Omega_{(B/I^s,I/I^s)/A}^j\To0,\] where the transition maps over $s\ge1$ are surjective on the left. Taking $\projlim_s$ completes the proof.
\end{proof}

Now suppose that $Y$ is an arbitrary $A$-scheme. Then the sheafification of $Y_\sub{\'et}\ni U\mapsto W_r\Omega^j_{\roi_Y(U)/A}$ is an \'etale sheaf $W_r\Omega^j_{Y/A}$ with the property that, for each affine $\Spec B\in Y_\sub{\'et}$, there exists a natural isomorphism $W_r\Omega^j_{Y/A}(\Spec B)\cong W_r\Omega^j_{B/A}$; indeed, this follows from flat descent and that facts that if $B\to B'$ is an \'etale morphism of $A$-algebras, then $W_r(B)\to W_r(B')$ is \'etale and $W_r\Omega^j_{B/A}\otimes_{W_r(B)}W_r(B')\isoto W_r\Omega^j_{B'/A}$ \cite[Thm.~10.4, Lem.~10.8]{Bhatt_Morrow_Scholze2}. Also set \[W_r\Omega^{j,F=1}_{Y/A}:=\ker(W_r\Omega_{Y/A}^j\xto{F-R}W_{r-1}\Omega_{Y/A}^j),\] whose sections on any $\Spec B\in Y_\sub{\'et}$ are $W_r\Omega^{j,F=1}_{B/A}$. Similarly, given a closed subscheme $i:Z\into Y$, the \'etale sheaves
\begin{align*}W_r\Omega^j_{(Y,Z)/A}&:=\ker(W_r\Omega^j_{Y/A}\To i_*W_r\Omega^j_{Z/A})\\
W_r\Omega^{j,F=1}_{(Y,Z)/A}&:=\ker(W_r\Omega_{(Y,Z)/A}^j\xto{F-R}i_*W_{r-1}\Omega_{(Y,Z)/A}^j)\\
&\phantom{:}=\ker(W_r\Omega^{j,F=1}_{Y/A}\To i_*W_r\Omega^{j,F=1}_{Z/A})
\end{align*}
have the property that, on any affine $\Spec B\in Y_\sub{\'et}$ with associated ideal $I\subseteq B$ defining $\Spec B\times_ZY$, their sections are given respectively by $W_r\Omega^j_{(B,I)/A}$ and $W_r\Omega^{j,F=1}_{(B,I)/A}$.

Finally, suppose that $\cal Y$ is an arbitrary formal $A$-scheme; to be precise (to avoid any confusion about the theory of non-Noetherian formal schemes), we ask only that $\cal Y=\indlimf_s\cal Y_s$ be an ind $A$-scheme such that each transition map $\cal Y_s\to \cal Y_{s+1}$ is a closed embedding defined by a nilpotent ideal sheaf. We may therefore identify the \'etale sites of $\cal Y_1,\cal Y_2,\dots$ -- denote this common site by $\cal Y_\sub{\'et}$ -- and  define a sheaf on $\cal Y_\sub{\'et}$ by $W_r\Omega^j_{\cal Y/A}:=\projlim_sW_r\Omega^j_{\cal Y_s/A}$.

\begin{theorem}\label{theorem_surj_on_formal_schemes}
Under the set-up of the previous paragraph, assume moreover that $p$ is nilpotent on $\cal Y_1$. Then the map $F-R:W_r\Omega^j_{\cal Y/A}\to W_{r-1}\Omega^j_{\cal Y/A}$ of sheaves on $\cal Y_\sub{\'et}$ is surjective for each $j\ge 0$.
\end{theorem}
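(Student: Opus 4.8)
The plan is to strip off the ind-scheme structure, then run a short chain of reductions that replaces the base ring and the structure ring by successively more primitive objects, ending at a polynomial algebra over $\bb F_p$, where the assertion is the classical theorem of Illusie \cite{Illusie1979}. Throughout, ``surjective'' refers to surjectivity as a map of \'etale sheaves; since each of the sheaves $W_r\Omega^j$ and $W_r\Omega^j_{(-,I)}$ is a sheaf of modules (or a subsheaf of one) over a sheaf of Witt rings, it has no higher \'etale cohomology on affines, so surjectivity may be tested on sections over an affine $\Spec B\in\cal Y_\sub{\'et}$, and it suffices to exhibit a preimage after some faithfully flat \'etale base change $B\to B'$. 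In particular each reduction below is an honest statement about rings, and a surjection of de Rham--Witt complexes compatible with $F$ and $R$ automatically transports surjectivity of $F-R$ from source to target (lift, solve, push down).

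\emph{Reduction to a single scheme.} On an affine object, presented as a compatible system $\Spec B_\bullet$ with $B_{s+1}\onto B_s$ having nilpotent kernel and $W_r\Omega^j_{\cal Y/A}(\Spec B_\bullet)=\projlim_s W_r\Omega^j_{B_s/A}$, put $L_s:=\ker(B_s\to B_1)$ (nilpotent) and $K_{r,s}:=W_r\Omega^j_{(B_s,L_s)/A}=\ker(W_r\Omega^j_{B_s/A}\to W_r\Omega^j_{B_1/A})$. By Lemma \ref{lemma_app1} the tower $(K_{r,s})_s$ has surjective transition maps, by Lemma \ref{lemma_surj_of_1-F_in_nilp_case}(i) $F-R$ is surjective on each $K_{r,s}$, and by Lemma \ref{lemma_surj_of_1-F_in_nilp_case}(ii) the tower of $F$-fixed kernels $(W_r\Omega^{j,F=1}_{(B_s,L_s)/A})_s$ also has surjective transition maps; hence, exactly as in Corollary \ref{proposition_limit_of_F-fixed_points}, the relevant $\projlim^1$ terms vanish, so $\projlim_s$ produces a short exact sequence $0\to\projlim_s K_{r,s}\to\projlim_s W_r\Omega^j_{B_s/A}\to W_r\Omega^j_{B_1/A}\to 0$ on which $F-R$ acts, surjectively on the left-hand term. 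A diagram chase then reduces the theorem to: \emph{for an $A$-algebra $B$ in which $p$ is nilpotent, $F-R:W_r\Omega^j_{B/A}\to W_{r-1}\Omega^j_{B/A}$ is surjective \'etale-locally.}

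\emph{Reduction to $\bb A^n_{\bb F_p}$.} I would do this in four steps, each one either passing to a quotient de Rham--Witt complex or using Lemma \ref{lemma_surj_of_1-F_in_nilp_case}(i). First, the canonical surjection $W_r\Omega^\bullet_{B/\bb Z_{(p)}}\onto W_r\Omega^\bullet_{B/A}$ (compatible with $F,R$) allows us to take $A=\bb Z_{(p)}$. Second, with $C:=B/pB$ the ideal $pB$ is nilpotent, so the exact sequence $0\to W_r\Omega^j_{(B,pB)/\bb Z_{(p)}}\to W_r\Omega^j_{B/\bb Z_{(p)}}\to W_r\Omega^j_{C/\bb Z_{(p)}}\to 0$ together with Lemma \ref{lemma_surj_of_1-F_in_nilp_case}(i) and the snake lemma reduces us to $F-R$ on $W_r\Omega^j_{C/\bb Z_{(p)}}=W_r\Omega^j_{C/\bb F_p}$ (equal since $C$ is an $\bb F_p$-algebra), with $C$ now arbitrary. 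Third, presenting $C$ as a quotient of a polynomial ring $P=\bb F_p[x_i:i\in I]$ and using that $W_r\Omega^\bullet_{C/\bb F_p}$ is a quotient of $W_r\Omega^\bullet_{P/\bb F_p}$ reduces us to $P$. Fourth, $P$ is the filtered colimit of its finitely generated polynomial subalgebras, de Rham--Witt commutes with filtered colimits \cite{LangerZink2004}, and surjectivity of \'etale-sheaf maps survives such colimits (the usual limit argument for \'etale algebras combined with \cite[Lem.~10.8]{Bhatt_Morrow_Scholze2}); so we are left with $P=\bb F_p[x_1,\dots,x_n]$, that is $\bb A^n_{\bb F_p}$.

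The surjectivity of $F-R:W_r\Omega^j_{\bb F_p[x_1,\dots,x_n]/\bb F_p}\to W_{r-1}\Omega^j_{\bb F_p[x_1,\dots,x_n]/\bb F_p}$ as \'etale sheaves is the classical theorem of Illusie \cite{Illusie1979} for the smooth $\bb F_p$-scheme $\bb A^n_{\bb F_p}$, and is the only genuinely non-formal input. I expect the main obstacle to be not any single reduction but the inverse-limit bookkeeping of the first step: one must ensure that the towers of $F$-fixed subsheaves have surjective transition maps, so that the homotopy fibre of $F-R$ over $\cal Y$ is genuinely computed from that over $\cal Y_1$ with no residual $\projlim^1$ contribution, and supplying exactly this is the role of Lemma \ref{lemma_surj_of_1-F_in_nilp_case}(ii) and Corollary \ref{proposition_limit_of_F-fixed_points}.
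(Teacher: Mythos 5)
Your proposal is correct and follows essentially the same route as the paper: reduce via the $\projlim$ argument (using Lemma~\ref{lemma_surj_of_1-F_in_nilp_case} and the vanishing of $\projlim^1$) from the formal scheme to the single $\bb F_p$-scheme $\cal Y_1$, change base ring to $\bb F_p$, present an affine piece as a quotient of a polynomial ring, and invoke Illusie. The only cosmetic differences are that the paper performs the mod-$p$ reduction at the very start (replacing $\cal Y_1$ by $\cal Y_1\times_A A/pA$, "to save repeating an argument"), whereas you do it after the limit step, and the paper compresses your steps 3--4 into "clearly enough to consider $Y$ affine of finite type over $\bb F_p$, then pick a closed embedding into a smooth affine."
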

\begin{proof}
After replacing $\cal Y_1$ by $\cal Y_1\times_AA/pA$ we may suppose that $p=0$ on $\cal Y_1$; this will save us from needing to repeat an argument.

Recall again that $\cal Y_s\into \cal Y_{s+1}$ induces an isomorphism of \'etale sites, and that moreover any given object $U\in \cal Y_{s+1,\sub{\'et}}$ is affine if and only if its pullback $U\times_{Y_{s+1}}\cal Y_s\in \cal Y_{s,\sub{\'et}}$ is affine. Thus a basis for $\cal Y_\sub{\'et}$ is given by those $U\in\cal Y_\sub{\'et}$ corresponding to affines in $\cal Y_{s,\sub{\'et}}$ for any (equivalently, all) $s\ge1$. Let $U$ be such a basis element, with corresponding affines $\Spec B_s\in \cal Y_{s,\sub{\'et}}$ for each $s\ge1$. Note that $B_{s}\to B_{s-1}$ is a surjection with nilpotent kernel; let $I_s:=\ker(B_s\to B_1)$.

For each $s\ge1$, we consider the following diagram of abelian groups
\[\xymatrix@C=1.3cm@R=0.5cm{
&0\ar[d]&0\ar[d] & 0\ar[d]\\
0\ar[r]&W_r\Omega_{(B_s,I_s)/A}^{j,F=1}\ar[r]\ar[d]&W_r\Omega_{(B_s,I_s)/A}^j\ar[d]\ar[r]^{F-R}&W_{r-1}\Omega_{(B_s,I_s)/A}^j\ar[d]\ar[r]&0\\
0\ar[r]&W_r\Omega_{B_s/A}^{j,F=1}\ar[d]\ar[r]&W_r\Omega_{B_s/A}^j\ar[d]\ar[r]^{F-R}&W_{r-1}\Omega_{B_s/A}^j\ar[d]&\\
0\ar[r]&W_r\Omega_{B_1/A}^{j,F=1}\ar[d]\ar[r]&W_r\Omega_{B_1/A}^j\ar[d]\ar[r]^{F-R}&W_{r-1}\Omega_{B_1/A}^j\ar[d]&\\
&0&0&0
}\]
in which the top row and left column are exact by Lemma \ref{lemma_surj_of_1-F_in_nilp_case}; that lemma also tells us that each transition map in the top left corner, i.e., $W_r\Omega_{(B_{s},I_s)/A}^{j,F=1}\to W_r\Omega_{(B_{s-1},I_{s-1})/A}^{j,F=1}$, is surjective. There is therefore no $\projlim^1$ obstruction and so taking $\projlim_s$ yields a diagram with exact rows and columns
\[\xymatrix@C=1.3cm@R=0.5cm{
&0\ar[d]&0\ar[d] & 0\ar[d]\\
0\ar[r]&\projlim_sW_r\Omega_{(B_s,I_s)/A}^{j,F=1}\ar[r]\ar[d]&\projlim_sW_r\Omega_{(B_s,I_s)/A}^j\ar[d]\ar[r]^{F-R}&\projlim_sW_{r-1}\Omega_{(B_s,I_s)/A}^j\ar[d]\ar[r]&0\\
0\ar[r]&\projlim_sW_r\Omega_{B_s/A}^{j,F=1}\ar[d]\ar[r]&\projlim_sW_r\Omega_{B_s/A}^j\ar[d]\ar[r]^{F-R}&\projlim_sW_{r-1}\Omega_{B_s/A}^j\ar[d]&\\
0\ar[r]&\projlim_sW_r\Omega_{B_1/A}^{j,F=1}\ar[d]\ar[r]&\projlim_sW_r\Omega_{B_1/A}^j\ar[d]\ar[r]^{F-R}&\projlim_sW_{r-1}\Omega_{B_1/A}^j\ar[d]&\\
&0&0&0
}\]
The bottom right square in this diagram is precisely the sections of
\[\xymatrix{
W_r\Omega^j_{\cal Y/A}\ar[r]^{F-R}\ar[d] & W_{r-1}\Omega^j_{\cal Y/A}\ar[d] \\
W_r\Omega^j_{\cal Y_1/A}\ar[r]^{F-R} & W_{r-1}\Omega^j_{\cal Y_1/A}
}\]
on the arbitrary basis element $U$, and so we have proved that the induced map $F-R:W_r\Omega^j_{(\cal Y,\cal Y_1)/A}\to W_{r-1}\Omega^j_{(\cal Y,\cal Y_1)/A}$ (the notation is the obvious one) is surjective. To complete the proof, it is therefore necessary and sufficient to show that \[F-R:W_r\Omega^j_{\cal Y_1/A}\To W_{r-1}\Omega^j_{\cal Y_1/A}\] is surjective.

But $Y:=\cal Y_1$ is a $\bb F_p$-scheme, so there is a natural surjection of de Rham--Witt sheaves $W_r\Omega^j_{Y/\bb F_p}\to W_r\Omega^j_{Y/A}$ (note that the latter is the same as $W_r\Omega^j_{Y/(A/pA)}$) and it is therefore enough to show that \[F-R:W_r\Omega^j_{Y/\bb F_p}\To W_{r-1}\Omega^j_{Y/\bb F_p}\] is surjective. It is clearly enough to consider the case in which $Y$ is affine and of finite type over $\bb F_p$, in which case we may pick a closed embedding into a smooth affine $\bb F_p$-scheme and appeal to the surjectivity in the smooth case \cite[Prop.~II.3.26]{Illusie1979}.
\end{proof}

\end{appendix}

\bibliographystyle{acm}
\bibliography{../Bibliography}

\noindent Matthew Morrow\\
Institut de Mathématiques de Jussieu--Paris Rive Gauche\\
UPMC - 4 place Jussieu\\
Case 247\\
75252 Paris\\
\\
{\tt matthew.morrow@imj-prg.fr}

\end{document}